\numberwithin{equation}{section}
\begin{document}

\title{The Cohen-Macaulay Type of Edge-Weighted $r$-path Ideals} 


\author{Shuai Wei} 
\address{University of New Mexico
Department of Mathematics and Statistics, 
1 University of New Mexico, MSC01 1115
Albuquerque, NM 87131 USA}
\email{wei6@unm.edu}

\begin{abstract}
    We describe combinatorially the Cohen-Macaulay type of edge-weighted $r$-path suspensions of edge-weighted graphs for an arbitrary positive integer $r$. The computation of the Cohen-Macaulay type of edge-weighted suspensions of edge-weighted graphs becomes a special case of $r = 1$. 
\end{abstract}

\maketitle


\section*{Introduction} \label{1}

\begin{assumption*}
    Throughout, let $G$ be a (finite simple) graph with vertex set $V=V(G)=\{v_1,\ldots,v_d\}$ of cardinality $d \geq 1$ and edge set $E = E(G)$. An edge between vertices $v_i$ and $v_j$ is denoted $v_iv_j$. Let $\bbK$ be a field and set $R=\bbK[X_1,\ldots,X_d]$. Set $\ffm = (X_1,\ldots,X_d)R$. Fix an integer $r \in \bbN = \{1,2,\dots\}$ and set $R' = \bbK[\{X_{i,j} \mid i = 1,\ldots,d,j = 0,\ldots,r\}]$. An \emph{edge-weighting} on $G$ is a function $\omega: E \to \bbN$, and $G_\omega$ denotes a graph $G$ equipped with an edge-weighting $\omega$.
\end{assumption*}

    Combinatorial commutative algebra uses combinatorics and graph theory to understand certain algebraic constructions; it also uses algebra to understand certain objects in combinatorics and graph theory. In this paper, we explore aspects of this area via edge ideals and path ideals of edge-weighted graphs. 

    The \emph{edge ideal} of $G$ introduced by Villarreal~\cite{MR1031197} is the ideal $I(G)$ of $R$ that is ``generated by the edges of $G$'':
    \[I(G)=(X_iX_j\mid v_iv_j\in E)R.\]

    Villarreal~\cite{MR1031197} characterizes the trees $T$ for which $I(T)$ is Cohen-Macaulay: these are the ``suspensions'' or ``whiskered trees'', i.e., trees obtained from a subtree $U$ by adding an edge $\begin{tikzcd}v_i \ar[r,dash] &[-10pt]  v_{i1} \end{tikzcd}$ to each vertex $v$ of $U$: 
    \[
        \begin{tikzcd}
            U = &[-10pt] v_1 \ar[r,dash] & v_2 \ar[r,dash] & v_3 &[+20pt] T = &[-10pt] v_1 \ar[r,dash] \ar[d,dash] & v_2 \ar[r,dash] \ar[d,dash] & v_3 \ar[d,dash] \\
            & & & & & v_{11} & v_{2,1} & v_{3,1}
        \end{tikzcd}
    \]
    It is straightforward to show that the elements $v_i-v_{i1}$ form a maximal regular sequence on $R'/I(T)$ such that the ensuing quotient is $R/(I(U) + \langle x_1^2,\dots,x_d^2 \rangle)$. From this, one readily computes the Cohen-Macaulay type of $R'/I(T)$ as the number of ideals in an irredundant irreducible decomposition of $I(U)$, in other words, the number of minimal vertex covers of $U$. For instance, in the displayed example, the type of $R'/I(T)$ is 2, either by the decomposition $I(U)  = \langle v_1v_2,v_2v_3 \rangle = \langle v_1,v_3 \rangle \cap \langle v_2 \rangle$ or by the minimal vertex covers $\{v_1,v_3\}$ and $\{v_2\}$. The goal of this paper is to extend this computation to the following more general constructions.

    Paulsen and Sather-Wagstaff~\cite{MR3055580} generalized Villarreal's construction in on direction with the edge ideal of an edge-weighted graph $G_\omega$: the ideal $I(G_\omega)$ of $R$ which is ``generated by all weighted-edges of $G_\omega$'':
    \[I(G_\omega) = \left(X_i^{\omega(v_iv_j)}X_j^{\omega(v_iv_j)} \mathrel{\big |} v_iv_j\in E\right)R.\]
    In particular, if $\omega$ is the constant function defined by $\omega(v_iv_j) = 1$ for $v_iv_j \in E$, then $I(G_\omega) = I(G)$. 

    Building from Villarreal's work in another direction, Conca and De Negri~\cite{MR1666661} defined the \emph{$r$-path ideal} associated to $G$ to be the ideal $I_r(G)$ of $R$ that is ``generated by the paths in $G$ of length $r$'':
    \[I_r(G) = (X_{i_1} \cdots X_{i_{r+1}} \mid v_{i_1} \cdots v_{i_{r+1}} \text{ is a path of length $r$ in }G)R.\]
    In particular, if $r$ = 1, then $I_1(G) = I(G)$.

    Kubik and Sather-Wagstaff~\cite{MR3335988} gave a construction that subsumes each of these: the \emph{edge-weighted $r$-path ideal} $I_r(G_\omega)$ of an edge-weighted graph $G_\omega$; see Definition~\ref{defOfWRPathIdeal}\ref{defOfWRPathIdeala}. When $G$ is a tree, a theorem of Kubik and Sather-Wagstaff~\cite{MR3335988} gives a graph-theoretic characterization when $I_r(G_\omega)$ is Cohen-Macaulay. The main result of this paper computes the type for these ideals when they are Cohen-Macaulay. As with the above computation our result is purely graph-theoretical. These results are in Theorem~\ref{computeTypeOfWRPathOfWRPathSuspension} and Corollary~\ref{computeTypeOfWAndUMRPathOfRPathSuspensionCor}. They form the bulk of Section~\ref{sec2}. Necessary background information is collected in Section~\ref{sec1}.

\section{Background} \label{sec1} 

We begin this section with the definition of type. See, e.g., Bruns and Herzog~\cite{MR1251956} for undefined notions.

\begin{definition}
    Let $I$ be a proper homogeneous ideal of $R$ such that $I$ is Cohen-Macaulay. Let $d = \dim(R/I)$. The \emph{Cohen-Macaulay type}, denoted $r_R(R/I)$, is defined to be the dimension of the $\bbK$-vector space $\operatorname{Ext}_R^d(\bbK,R/I)$. In symbols: 
    \[r_R(R/I) = \dim_\bbK \operatorname{Ext}_R^d(\bbK,R/I).\]
\end{definition}

\begin{notation}
    For a monomial ideal of $I$ of $R$, let $\llbracket I \rrbracket$ the set of monomials contained in $I$.
\end{notation}

When visualizing an edge-weighted graph, we put a positive integer around an edge $v_iv_j$ to denote the weight of that edge, as follows.

\begin{example} \label{edge-weightedGraphExample}
    An edge-weighted graph $G_\omega$ with the vertex set $V = \{v_1,v_2,v_3,v_4,v_5\}$ and the edge set $E = \{v_1v_2,v_2v_3,v_2v_4,v_3v_4,v_4v_5\}$ is represented in the following drawing. 
    \[
        \begin{tikzcd}
            & & v_3 \ar[ld,dash,"3"'] \ar[rd,dash,"9"] \\
            v_1 \ar[r,dash,"2"] & v_2 \ar[rr,dash,"4"] && v_4 \ar[r,dash,"7"] & v_5
        \end{tikzcd}  
    \]
\end{example} 

The following definition provides a combinatorial description of decompositions of ideals constructed from edge-weighted graphs. See Section 2 of \cite{MR3335988}.

\begin{definition} \cite[Definitions 1.5 and 1.7]{MR3335988} \label{defOffWRPathVC}
    An \emph{edge-weighted $r$-path vertex cover} of $G_\omega$ is an ordered pair $(V',\delta')$ with $V' \subseteq V$ and $\delta':V \to \bbN$ such that $V'$ is an $r$-path vertex cover of $G$ and such that for any $r$-path $P_r := v_{i_1} \cdots v_{i_{r+1}}$ in $G$ at least one of the following holds:
    \begin{enumerate}
        \item $\delta'(v_{i_1}) \leq \omega(v_{i_1}v_{i_2})$;
        \item $\delta'(v_{i_{r+1}}) \leq \omega(v_{i_r}v_{i_{r+1}})$; or
        \item $\delta'(v_{i_j}) \leq \max(\omega(v_{i_{j-1}}v_{i_j}), \omega(v_{i_j} v_{i_{j+1}}))$ for some $j \in \{2,\dots,r\}$.
    \end{enumerate}
    The number $\delta'(v_i)$ is the \emph{weight} of $v_{i_j}$. \par 
    Given two edge-weighted $r$-path vertex covers $(V_1',\delta_1')$ and $(V_2',\delta_2')$ of $G_\omega$, we write $(V_2',\delta_2') \leq (V_1',\delta_1')$ if $V_2' \subseteq V_1'$ and $\delta_2'(v_i) \geq \delta_1'(v_i)$ for all $v_i \in V_2'$. An edge-weighted $r$-path vertex cover $(V',\delta')$ is \emph{minimal} if there does not exist another $f$-edge-weighted $r$-path vertex cover $(V'',\delta'')$ such that $(V'',\delta'') < (V',\delta')$.
\end{definition}

\begin{notation}
    For an edge-weighted $r$-path vertex cover $(V',\delta')$ of $G_\omega$, we also use the decorated set $\{v_i^{\delta'(v_i)} \mid v_i \in~V'\}$ to denote it, especially when we depict an edge-weighted $r$-path vertex cover of $G_\omega$ in sketches.
\end{notation}

We represent edge-weighted $r$-path vertex covers algebraically and diagrammatically, as follows.

\begin{example}
    Let $G_\omega$ be the edge-weighted graph from Example~\ref{edge-weightedGraphExample}. Then $\{v_2^3\}$ is an edge-weighted 3-path vertex cover of $G_\omega$, which is represented, as follows.
    \[
        \begin{tikzcd}[every matrix/.append style={name=m},
            execute at end picture={
                \draw [<-] (m-2-2) ellipse (0.3cm and 0.25cm);
            }]
            & & v_3 \ar[ld,dash,"3"'] \ar[rd,dash,"9"] \\
            v_1 \ar[r,dash,"2"] & v_2^3 \ar[rr,dash,"4"] && v_4 \ar[r,dash,"7"] & v_5
        \end{tikzcd}  
    \]
    Since $\{v_2^4\}$ is not an edge-weighted 3-path vertex cover of $G_\omega$, $\{v_2^3\}$ is a minimal edge-weighted 3-path vertex cover. 
\end{example}

The main combinatorial objects we will work on are introduced below.
  
\begin{definition} \cite[Definition 3.4]{MR3335988} \label{defOfSuspensionWSp}
    The \emph{$r$-path suspension} of $G$ is the graph $\Sigma_rG$ obtained by adding a new path of length $r$ to each vertex of $G$ such that the vertex set
    \[V(\Sigma_r G) = \{v_{i,j} \mid i = 1,\ldots,d,j = 0,\ldots,r\} \text{ with } v_{i,0} = v_i, \fa i = 1,\ldots,d.\] 
    The new $r$-paths are called \emph{$r$-whiskers}. An \emph{edge-weighted $r$-path suspension} of $G_\omega$ is an edge-weighted graph $(\Sigma_rG)_\lambda$ with weight function $\lambda: \Sigma _rG \to \bbN$ such that the underlying graph $\Sigma_r G$ is an $r$-path suspension of $G$ and $\lambda(v_iv_j) = \omega(v_iv_j)$ for all $v_iv_j \in E(G)$, i.e., $\lambda|_{E(G)} = \omega$. 
\end{definition}
Examples of the 2-path suspension of an edge-weighted graph and an edge-weighted 2-path suspension of an edge-weighted graph are given by the following.
\begin{example} \label{examplesOfSuspensionsAndW}
    \begin{enumerate}
        \item \label{examplesOfSuspensionsAndWa} A 2-path suspension $\Sigma_2C_3$ of a $3$-cycle $C_3 = (\begin{tikzcd}v_1 \ar[r,dash] & v_2 \ar[r,dash] & v_3 \ar[r,dash] & v_1\end{tikzcd})$ is shown in the following. 
        \[
            \begin{tikzcd}
                & v_{1,2} \ar[d,dash] \\ [-5pt]
                v_{2,2} \ar[d,dash] & v_{1,1} \ar[d,dash] & v_{3,2} \ar[d,dash] \\ [-5pt]
                v_{2,1} \ar[d,dash] & v_1 \ar[ld,dash] \ar[rd,dash] & v_{3,1} \ar[d,dash] \\ [-5pt]
                v_2 \ar[rr,dash] & & v_3
            \end{tikzcd} 
        \]
        \item \label{examplesOfSuspensionsAndWb} Let $(P_2)_\omega = (\begin{tikzcd}v_1 \ar[r,dash,"1"] & v_2 \ar[r,dash,"2"] & v_3\end{tikzcd})$ be an edge-weighted 2-path. An edge-weighted 2-path suspension $(\Sigma_2P_2)_\lambda$ is shown in the following. 
        \[
            \begin{tikzcd}
                v_1 \ar[d,dash,"1"'] \ar[r,dash,"4"] & v_{1,1} \ar[r,dash,"3"] & v_{1,2} \\
                v_2 \ar[d,dash,"2"'] \ar[r,dash,"3"] & v_{2,1} \ar[r,dash,"3"] & v_{2,2} \\
                v_3 \ar[r,dash,"2"] & v_{3,1} \ar[r,dash,"5"] & v_{3,2}
            \end{tikzcd}
        \]
    \end{enumerate}
\end{example}

Kubik and Sather-Wagstaff~\cite{MR3335988} introduced the edge-weighted $r$-path ideal of an edge-weighted graph.

\begin{definition} \label{defOfWRPathIdeal}
    \begin{enumerate}
        \item \label{defOfWRPathIdeala} The \emph{edge-weighted $r$-path ideal} associated to $G_\omega$ is the ideal $I_r(G_\omega)$ of $R$ that is ``generated by the maximal edge-weighted paths in $G$ of length $r$'':
            \[I_r(G_\omega) = \Bigggm(X_{i_1}^{e_{i_1}} \cdots X_{i_{r+1}}^{e_{i_{r+1}}} \mathrel{\Biggg |} \begin{array}{l}v_{i_1} \cdots v_{i_{r+1}} \text{ is a path in }G \text{ with }e_{i_1} = \omega(v_{i_1}v_{i_2}), \\ e_{i_j} = \max(\omega(v_{i_{j-1}}v_{i_j}),\omega(v_{i_j},v_{i_{j+1}})) \text{ for } 1 < j \leq r \\ \text{and } e_{i_{r+1}} = \omega(v_{i_r}v_{i_{r+1}})\end{array}\Bigggm)R.\]
        \item
            Let $(V',\delta')$ be a pair such that $V' \subseteq V$ and $\delta': V' \to \bbN$. We define the cardinality of $(V',\delta')$, denoted $\abs{(V',\delta')}$, to be the cardinality of $V'$. Set $P(V',\delta') \subseteq R$ to be the ideal ``generated by the elements of $(V',\delta')$'': 
            \[P(V',\delta') = \Bigl(X_i^{\delta'(v_i)} \mathrel{\big |} v_i \in V'\Bigr)R.\]
            Since $\bbK$ is a field, the ideals $P(V',\delta')$ are irreducible.
    \end{enumerate}
\end{definition}

\begin{remark}
    Let $(\Sigma_rG)_\lambda$ be an edge-weighted $r$-path suspension of $G_\omega$. Then the edge-weighted $r$-path ideal $I_r((\Sigma_rG)_\lambda)$ is an ideal of $R' = \bbK[\{X_{i,j} \mid i = 1,\ldots,d,j = 0,\ldots,r\}]$. 
\end{remark}

\begin{convention}
    Let $(\Sigma_rG)_\lambda$ be an edge-weighted $r$-path suspension of $G_\omega$. Then there is a bijection between vertices of $\Sigma_rG$ and variables of $R' = \bbK[X_{i,j} \mid i = 1,\dots,d,j = 0,\dots,r]$ given by $X_{i,j} \longleftrightarrow v_{i,j}$. 
    Based on the setting that $v_{i,0} = v_i$ for $i = 1,\dots,d$, we have that $X_{i,0} = X_i$ for $i = 1,\dots,d$. 
\end{convention}

In order to illustrate the previous concept and convention, it is helpful to consider one example.

\begin{example} \label{edge-weighted2PathIdealAndPIdealExample}
    Consider the following edge-weighted graph $(\Sigma_2 P_2)_\lambda$ as in Example~\ref{examplesOfSuspensionsAndW}\ref{examplesOfSuspensionsAndWb}. 
    \[    
        \begin{tikzcd}
            v_1 \ar[d,dash,"1"'] \ar[r,dash,"4"] & v_{1,1} \ar[r,dash,"3"] & v_{1,2} \\
            v_2 \ar[d,dash,"2"'] \ar[r,dash,"3"] & v_{2,1} \ar[r,dash,"3"] & v_{2,2} \\
            v_3 \ar[r,dash,"2"] & v_{3,1} \ar[r,dash,"5"] & v_{3,2}
        \end{tikzcd}
    \]
    The edge-weighted $2$-path ideal associated to $(\Sigma_2 P_2)_\lambda$ is 
    \begin{align*}
        I_2((\Sigma_2 P_2)_\lambda) &= (X_{1,2}^3X_{1,1}^4X_1^4,X_{1,1}^4X_1^4X_2,X_1X_2^3X_{2,1}^3,X_1X_2^2X_3^2,X_{2,2}^3X_{2,1}^3X_2^3,\\
        & \ \ \ \ \ \ X_{2,1}^3X_2^3X_3^2,X_2^2X_3^2X_{3,1}^2,X_{3,2}^5X_{3,1}^5X_3^2)R'.
    \end{align*}
    Let $V' = \{v_1,v_{2,1},v_3\} \subseteq V((\Sigma_2P_2)_\lambda)$ and $\delta': V' \longrightarrow \bbN$ be defined by $v_1 \longmapsto 3$, $v_{2,1} \longmapsto 2$ and $v_3 \longmapsto 4$. Then 
    \[P(V',\delta') = (X_1^3,X_{2,1}^2,X_3^4)R',\]
    where $R' = \bbK[X_1,X_{1,1},X_{1,2},X_2,X_{2,1},X_{2,2},X_3,X_{3,1},X_{3,2}]$.
\end{example}

The following concept will be use for turning the edge-weighted $r$-path ideal of an edge-weighted suspension $(\Sigma_rG)_\lambda$ into a monomial ideal in $R$. 

\begin{definition} \label{totallyProjectionMapP}
    Define a map $p:  R' \to R$ by sending $f$ to $g$, where $g$ is obtained by replacing every variable $X_{i,j}$ in $f$ with $X_i$. Let $I$ be a monomial ideal of $R'$. We set
    \[IR = p(I)R = (X_{i_1}^{a_1} \cdots X_{i_n}^{a_n} \in R \mid \ex X_{i_1,j_1}^{a_1} \cdots X_{i_n,j_n}^{a_n} \in \llbracket I \rrbracket)R.\]
    In words, $IR$ is the monomial ideal of $R$ obtained from $I$ by setting $X_{i,j} = X_i$ for all $i,j$. It is straightforward to show that if $f_1,\dots,f_m$ is a monomial generating sequence for $I$, then $p(f_1),\dots,p(f_m)$ is a monomial generating sequence for $IR$.
\end{definition}

To make this definition clearer, let's look at a concrete example.

\begin{example} \label{reductionOf2PathIdealOf2PathSuspensionOfP2}
    Consider the 2-path ideal $I_2(\Sigma_2P_2)$ from Example~\ref{edge-weighted2PathIdealAndPIdealExample}. Then
    \begin{align*}
        I_2((\Sigma_2 P_2)_\lambda)R &= (X_1^3X_1^4X_1^4,X_1^4X_1^4X_2,X_1X_2^3X_2^3,X_1X_2^2X_3^2,X_2^3X_2^3X_2^3,\\
        & \ \ \ \ \ \ X_2^3X_2^3X_3^2,X_2^2X_3^2X_3^2,X_3^5X_3^5X_3^2)R \\
        &= (X_1^{11},X_1^8X_2,X_1X_2^{6},X_1X_2^2X_3^2,X_2^9,X_2^6X_3^2,X_2^2X_3^4,X_3^{12})R. 
    \end{align*}
\end{example}

Consider the ideal $I_r((\Sigma_rG)_\lambda)$ in $R'$, where $(\Sigma_rG)_\lambda$ is an edge-weighted $r$-path suspension of an edge-weighted graph $G_\omega$. Then the ideal $I_r((\Sigma_rG)_\lambda)R$ in $R$ has some generators corresponding to the $r$-whiskers of $(\Sigma_rG)_\lambda$. From these generators, we can create a special ideal in $R$, given below.

\begin{definition}
    Let $(\Sigma_rG)_\lambda$ be an edge-weighted $r$-path suspension of $G_\omega$. Define
    \[\ffm^{[\underline a(\lambda)]} = (X_1^{a_1},\dots,X_d^{a_d})R,\]
    where for $i = 1,\dots,d$: $a_i = \sum_{k=0}^{r}e_{i,k}$ with
    \[e_{i,k} = \left\{\begin{array}{ll}
                \lambda(v_{i}v_{i,1}) & \text{if }k = 0, \\ 
                    \max(\lambda(v_{i,k-1} v_{i,k}), \lambda(v_{i,k}v_{i,k+1})) & \text{if }k = 1,\dots,r-1, \\
                    \lambda(v_{i,r-1}v_{i,r}) & \text{if }k = r.
        \end{array}\right.
    \]
\end{definition}

\begin{example}
    In Example~\ref{edge-weighted2PathIdealAndPIdealExample}, we have that $\ffm^{[\underline a(\lambda)]} = (X_1^{a_1},X_2^{a_2},X_3^{a_3})R$, where
    \begin{align*}
        a_1 &= \sum_{k=0}^2 e_{1,k} = \lambda(v_1v_{1,1}) + \max(\lambda(v_{1}v_{1,1}),\lambda(v_{1,1}v_{1,2})) + \lambda(v_{1,1}v_{1,2}) = 4 + 4 + 3 = 11, \\
        a_2 &= \sum_{k=0}^2 e_{2,k} = \lambda(v_2v_{2,1}) + \max(\lambda(v_{2}v_{2,1}),\lambda(v_{2,1}v_{2,2})) + \lambda(v_{2,1}v_{2,2}) = 3 + 3 + 3 = 9, \\
        a_3 &= \sum_{k=0}^2 e_{3,k} = \lambda(v_3v_{3,1}) + \max(\lambda(v_{3}v_{3,1}),\lambda(v_{3,1}v_{3,2})) + \lambda(v_{3,1}v_{3,2}) = 2 + 5 + 5 = 12.
    \end{align*}
\end{example}

\begin{example} \label{IrSrEIrSrM1PMExample}
    Based on Definitions~\ref{defOfSuspensionWSp},~\ref{defOfWRPathIdeal}, and~\ref{totallyProjectionMapP}, we observe that 
    \[I_r((\Sigma_rG)_\lambda)R = I_r((\Sigma_{r-1}G)_{\lambda'})R + \ffm^{[\underline a(\lambda)]}, \text{ where }\lambda' = \lambda|_{\Sigma_{r-1}G}.\]
\end{example}

The notation below together with Definition~\ref{defOffWRPathVC} provide a combinatorial description of decompositions of the ideal $I_r((\Sigma_{r-1}G)_{\lambda'})R$, where $\lambda' = \lambda|_{\Sigma_{r-1}G}$; see Theorem~\ref{wRPathIdealDecompOfRPATHSuspension}. 
\begin{notation}
    Let $(\Sigma_{r-1}G)_\lambda$ be an edge-weighted $(r-1)$-path suspension of $G_\omega$. We define a map $q: V((\Sigma_{r-1}G)_\lambda) \to V(G)$ as $q(v_{i,j}) = v_i$. Let $\ffP := (V'',\delta'')$ be such that $V'' \subseteq V((\Sigma_{r-1}G)_\lambda)$ and $\delta'': V'' \to \bbN$. Then
    \[q(V'') = \{v_i \mid \ex v_{i,j} \in V''\}.\]
    \par For each vertex $v_{i,j}$ in $(\Sigma_{r-1} G)_\lambda$, let $h_{i,j}$ be the maximal edge weight among all edges of $(\Sigma_{r-1}G)_\lambda$ adjacent to $v_{i,j}$:
    \[h_{i,j} = \max\{\lambda(v_{i,j}v) \mid v_{i,j}v \in E((\Sigma_{r-1}G)_\lambda)\}, \fa i = 1,\dots,d,\ j = 0,\dots,r-1\]
    \par For each $i$, we let $W_i(\ffP)$ be a set of vertices $v_{i,j}$ in $V''$ such that $\delta''(v_{i,j})$ is no larger than $h_{i,k}$:
    \[
        W_i(\ffP) = \{v_{i,j} \in V'' \mid \delta''(v_{i,j}) \leq h_{i,j}\}, \fa i = 1,\dots,d.
    \]
    \par We define a function $\gamma_{(V'',\delta'')}$ by 
    \begin{align*}
        \gamma_{(V'',\delta'')}: q(V'') &\longrightarrow \bbN \sqcup \{\infty\} \\
        v_i &\longmapsto \left\{\begin{array}{ll} 
                \min\Bigl\{\delta''(v_{i,j}) + \sum_{k = 0}^{j-1} h_{i,k} \mathrel{\Big |} v_{i,j} \in W_i(\ffP)\Bigr\} &\text{if } W_i(\ffP) \neq \emptyset, \\
                \infty &\text{otherwise.}
        \end{array}\right.
    \end{align*}
\end{notation}

Let's explore these four concepts further by considering a particular example.

\begin{example} \label{exampleOfWRPVSIffSubset}
    An edge-weighted 2-path suspension $(\Sigma_2P_1)_\lambda$ of $(P_1)_\omega = (\begin{tikzcd}v_1 \ar[r,dash,"2"] & v_2\end{tikzcd})$ with an edge-weighted 3-path vertex cover $\ffP := (V'',\delta'')$ of $(\Sigma_2P_1)_\lambda$ is given in the following sketch.
    \[
        \begin{tikzcd}[every matrix/.append style={name=m},
          execute at end picture={
              \draw [<-] (m-1-2) ellipse (0.3cm and 0.25cm);
              \draw [<-] (m-1-3) ellipse (0.3cm and 0.25cm);
              \draw [<-] (m-2-1) ellipse (0.3cm and 0.25cm);
              \draw [<-] (m-2-2) ellipse (0.3cm and 0.25cm);
          }]
          v_1 \ar[d,dash,"2"'] \ar[r,dash,"2"] & v_{1,1}^3 \ar[r,dash,"5"] & v_{1,2}^6 \\
          v_2^5 \ar[r,dash,"3"] & v_{2,1}^3 \ar[r,dash,"4"] & v_{2,2}
        \end{tikzcd}
    \]
    Note that $q(V'') = \{v_1,v_2\}$. Since $\delta''(v_{1,1}) = 3 < 5 = \lambda(v_{1,1}v_{1,2})$ and $\delta''(v_{1,2}) = 6 > 5 = \lambda(v_{1,1}v_{1,2})$, we have that $W_1(\ffP) = \{v_{1,1}\}$. Similarly, $W_2(\ffP) = \{v_{2,1}\}$. Thus,
    \[\gamma_{(V'',\delta'')}(v_1) = \delta''(v_{1,1}) + \sum_{k=0}^{1-1}h_{1,k} = \delta''(v_{1,1}) + \max(\lambda(v_1v_2),\lambda(v_1v_{1,1})) = 3 + \max(2,2) = 5,\]
    \[\gamma_{(V'',\delta'')}(v_2) = \delta''(v_{2,1}) + \sum_{k=0}^{1-1}h_{2,k} = \delta''(v_{2,1}) + \max(\lambda(v_1v_2),\lambda(v_2v_{2,1})) = 3 + \max(2,3) = 6.\] 
\end{example}

\section{Edge-Weighted $r$-Path Ideals and the Type of $I_r((\Sigma_rG)_\lambda)$} \label{sec2}

This section is devoted to proving the main result of this paper, namely Theorem~\ref{computeTypeOfWRPathOfWRPathSuspension} and Corollary~\ref{computeTypeOfWAndUMRPathOfRPathSuspensionCor}.

\begin{proposition} \label{equivaDefForGammaVDeltaProp}
    Let $(\Sigma_{r-1}G)_\lambda$ be an edge-weighted $(r-1)$-path suspension of $G_\omega$. Let $\ffP := (V'',\delta'')$ be such that $V'' \subseteq V((\Sigma_{r-1}G)_\lambda)$ and $\delta'': V'' \to \bbN$. If $W_i(\ffP) \neq \emptyset$ for some $i \in \{1,\dots,d\}$, then for any $v_i \in q(V'')$,  
    \[\gamma_{(V'',\delta'')}(v_i) = \delta''(v_{i,j_0})+\sum_{k=0}^{j_0-1}h_{i,k}, \text{ where }j_0 := \min\{j \mid v_{i,j} \in W_i(\ffP)\}.\] 
\end{proposition}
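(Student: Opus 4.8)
The plan is to show that the minimum defining $\gamma_{(V'',\delta'')}(v_i)$ is already attained at the smallest admissible index $j_0$, so that the ``$\min$'' in the defining formula collapses to this single term. Fix an index $i$ for which $W_i(\ffP) \neq \emptyset$, so that $j_0 = \min\{j \mid v_{i,j} \in W_i(\ffP)\}$ is well-defined and both sides of the asserted equality make sense. By the definition of $\gamma_{(V'',\delta'')}(v_i)$, it then suffices to prove that
\[
\delta''(v_{i,j_0})+\sum_{k=0}^{j_0-1}h_{i,k} \ \leq\ \delta''(v_{i,j})+\sum_{k=0}^{j-1}h_{i,k}
\]
for every $v_{i,j} \in W_i(\ffP)$. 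The case $j = j_0$ is trivial, so I would assume $v_{i,j} \in W_i(\ffP)$ with $j > j_0$ (necessarily $j \geq j_0$ by minimality of $j_0$).

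Cancelling the common partial sum $\sum_{k=0}^{j_0-1} h_{i,k}$ from both sides, the inequality to prove becomes
\[
\delta''(v_{i,j_0}) - \delta''(v_{i,j}) \ \leq\ \sum_{k=j_0}^{j-1}h_{i,k}.
\]
The key step is to invoke the membership condition defining $W_i(\ffP)$: since $v_{i,j_0} \in W_i(\ffP)$, we have $\delta''(v_{i,j_0}) \leq h_{i,j_0}$. Because $j > j_0$, the index $j_0$ lies in the range $\{j_0,\dots,j-1\}$, so $h_{i,j_0}$ is one of the summands on the right-hand side; as every edge weight lies in $\bbN = \{1,2,\dots\}$, all the $h_{i,k}$ are positive, whence $\sum_{k=j_0}^{j-1} h_{i,k} \geq h_{i,j_0} \geq \delta''(v_{i,j_0})$. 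Finally, $\delta''(v_{i,j}) \geq 0$ since $\delta''(v_{i,j}) \in \bbN$, so $\delta''(v_{i,j_0}) - \delta''(v_{i,j}) \leq \delta''(v_{i,j_0}) \leq \sum_{k=j_0}^{j-1} h_{i,k}$, which is exactly the required bound.

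The argument is essentially a one-line estimate, so I do not expect a serious obstacle; the only point requiring care is the bookkeeping of indices — specifically, verifying that $h_{i,j_0}$ genuinely appears in the truncated sum $\sum_{k=j_0}^{j-1} h_{i,k}$ (which is precisely where the hypothesis $j > j_0$ is used) and that the cancellation of partial sums is performed correctly. Once the defining inequality $\delta''(v_{i,j_0}) \leq h_{i,j_0}$ of the set $W_i(\ffP)$ is brought in, positivity of the weights $h_{i,k}$ together with nonnegativity of $\delta''$ close the argument.
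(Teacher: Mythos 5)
Your proof is correct and is essentially the paper's argument written in direct (contrapositive) form rather than by contradiction: both hinge on the same two observations, namely that $v_{i,j_0} \in W_i(\ffP)$ gives $\delta''(v_{i,j_0}) \leq h_{i,j_0}$ and that $h_{i,j_0}$ is a summand of $\sum_{k=j_0}^{j-1} h_{i,k}$ once $j > j_0$. No gap to report.
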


\begin{proof}
    Suppose that $\delta''(v_{i,j}) + \sum_{k=0}^{j-1}h_{i,k} < \delta''(v_{i,j_0}) + \sum_{k=0}^{j_0-1}h_{i,k}$ for some $v_{i,j} \in W_i(\ffP)$. Then by the definition of $j_0$, we have that $j > j_0 \geq 0$ and so $\delta''(v_{i,j}) + \sum_{k=j_0}^{j-1} h_{i,k} < \delta''(v_{i,j_0})$.  Thus,
    \[h_{i,j_0} < \delta''(v_{i,j}) + h_{i,j_0} \leq \delta''(v_{i,j}) + \sum_{k=j_0}^{j-1} h_{i,k} < \delta''(v_{i,j_0}), \text{ i.e., }h_{i,j_0} < \delta''(v_{i,j_0}),\] 
    contradicting $v_{i,j_0} \in W_i(\ffP)$.
\end{proof}

The following theorem is a key for decomposing $I_r((\Sigma_{r-1}G)_{\lambda'})R$ with $\lambda' = \lambda|_{\Sigma_{r-1}G}$ and hence $I_r((\Sigma_rG)_\lambda)R$. The reader may wish to follow the argument with Example~\ref{exampleOfWRPVSIffSubset}.

\begin{theorem} \label{gammaWRPATHVCIff}
    Let $(\Sigma_{r-1}G)_\lambda$ be an edge-weighted $(r-1)$-path suspension of $G_\omega$ such that $\lambda(v_iv_j) \leq \lambda(v_i,v_{i,1})$ and $\lambda(v_iv_j) \leq \lambda(v_jv_{j,1})$ for all edges $v_iv_j \in E$. Let $\ffP := (V'',\delta'')$ be such that $V'' \subseteq V((\Sigma_{r-1}G)_\lambda)$ and $\delta'': V'' \to \bbN$. Then $I_r((\Sigma_{r-1}G)_\lambda)R \subseteq P(q(V''),\gamma_{(V'',\delta'')})$ if and only if $(V'',\delta'')$ is an edge-weighted $r$-path vertex cover of $(\Sigma_{r-1}G)_\lambda$. 
\end{theorem}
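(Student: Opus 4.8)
The plan is to prove both implications by carefully translating the containment $I_r((\Sigma_{r-1}G)_\lambda)R \subseteq P(q(V''),\gamma_{(V'',\delta'')})$ into a monomial membership condition and matching it against the three defining alternatives for an edge-weighted $r$-path vertex cover in Definition~\ref{defOffWRPathVC}. Recall from Definition~\ref{totallyProjectionMapP} that $I_r((\Sigma_{r-1}G)_\lambda)R$ is generated by the projections $p(u)$ of the generators $u$ of the edge-weighted $r$-path ideal, and these generators come in two flavors: those arising from $r$-paths lying entirely on an $(r-1)$-whisker attached to some $v_i$, and those arising from $r$-paths that use an edge $v_iv_j$ of $G$ together with whisker segments. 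Since $P(q(V''),\gamma_{(V'',\delta'')})$ is generated by the powers $X_i^{\gamma(v_i)}$ for $v_i \in q(V'')$, the containment holds if and only if every such generator $p(u)$ lies in $P(q(V''),\gamma)$, which (as $P$ is a monomial ideal generated by pure powers) means: for each generator $X_{i_1}^{b_1}\cdots X_{i_n}^{b_n}$ there exists some index $i_\ell \in q(V'')$ with $b_\ell \geq \gamma(v_{i_\ell})$.

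First I would fix the hypothesis $\lambda(v_iv_j) \le \lambda(v_iv_{i,1})$ and $\lambda(v_iv_j)\le \lambda(v_jv_{j,1})$, which forces the exponent contributed at a vertex $v_i$ along any $G$-edge to equal $e_{i,0}=\lambda(v_iv_{i,1})=h_{i,0}$ — this is exactly what makes the whisker-weight accounting compatible with the $h_{i,k}$ defining $\gamma$. Then I would treat the whisker-only generators: the $r$-path sitting on the $(r-1)$-whisker of $v_i$ projects to a single power $X_i^{c}$, and by the definition of $\ffm^{[\underline a(\lambda)]}$ and the $e_{i,k}$, the exponent $c$ is precisely $a_i = \sum_{k=0}^{r-1} h_{i,k}$ shifted appropriately, so membership $X_i^c \in P(q(V''),\gamma)$ reduces to comparing $c$ against $\gamma(v_i)=\delta''(v_{i,j_0})+\sum_{k=0}^{j_0-1}h_{i,k}$ from Proposition~\ref{equivaDefForGammaVDeltaProp}. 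The computation here should recover alternative (3) of the vertex-cover definition — namely that some interior whisker vertex $v_{i,j}$ has $\delta''(v_{i,j}) \le h_{i,j}$, i.e. lies in $W_i(\ffP)$ — and the telescoping $\sum h_{i,k}$ makes the inequality $c \ge \gamma(v_i)$ equivalent to $W_i(\ffP)\neq\emptyset$.

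Next I would handle the mixed generators spanning a $G$-edge $v_iv_j$. The key bookkeeping is that for such a generator the exponent on $X_i$ is the sum of the whisker contributions $h_{i,0},\dots,h_{i,s-1}$ up to the point where the path leaves the whisker, plus the edge weight, and the hypothesis guarantees this matches the partial sum $\sum_{k=0}^{s-1}h_{i,k}$ appearing in $\gamma$. The existence of an index $\ell$ with $b_\ell \ge \gamma(v_{i_\ell})$ then corresponds exactly to one of the endpoint conditions (1) or (2) of Definition~\ref{defOffWRPathVC} being witnessed by some $W_i(\ffP)$-vertex, while the $r$-path-vertex-cover requirement on $V''$ (that $V''$ meets every $r$-path of $\Sigma_{r-1}G$) is exactly what guarantees $q(V'')$ captures the relevant vertex. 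For the reverse implication I would run the same correspondence backwards: assuming $(V'',\delta'')$ is an edge-weighted $r$-path vertex cover, each of the three alternatives produces a $W_i(\ffP)$-vertex whose value under $\gamma$ is bounded by the corresponding exponent, giving the needed divisibility.

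The main obstacle I anticipate is the exponent bookkeeping for the mixed generators and the verification that the telescoping partial sums $\sum_{k=0}^{j-1}h_{i,k}$ line up precisely with the exponents produced by the $\max$-rule in Definition~\ref{defOfWRPathIdeal}\ref{defOfWRPathIdeala}; getting the indices and the role of the $h_{i,j_0}$ boundary term exactly right — and confirming that the hypothesis $\lambda(v_iv_j)\le\lambda(v_iv_{i,1})$ is what prevents the $G$-edge weight from ever dominating and disrupting the accounting — is where the real care is needed. Proposition~\ref{equivaDefForGammaVDeltaProp} will be essential here, since it lets me replace the minimum in the definition of $\gamma$ by the single value at $j_0$, turning a delicate $\min$-over-$W_i$ argument into a clean comparison.
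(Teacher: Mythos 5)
Your overall strategy --- reduce the containment to a statement about the monomial generators coming from $r$-paths, use Proposition~\ref{equivaDefForGammaVDeltaProp} to replace the minimum defining $\gamma_{(V'',\delta'')}$ by its value at $j_0$, and use the hypothesis $\lambda(v_iv_j)\le\lambda(v_iv_{i,1})$ to force $e_{i,0}=h_{i,0}$ --- is the same as the paper's, and your sketch of the reverse implication is essentially on track. But there are two genuine problems. First, your classification of the generators is wrong: in $\Sigma_{r-1}G$ the whisker attached to $v_i$ together with its base consists of only $r$ vertices, so no $r$-path (which has $r+1$ vertices) can lie entirely on a whisker; every $r$-path in $\Sigma_{r-1}G$ crosses at least one edge of $G$. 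The ``whisker-only'' generators you analyze, the ones producing $\ffm^{[\underline a(\lambda)]}$, live in $I_r((\Sigma_rG)_\lambda)$, not in $I_r((\Sigma_{r-1}G)_\lambda)$, which is the ideal in this theorem. Consequently your proposed dictionary --- whisker paths give alternative (3) of Definition~\ref{defOffWRPathVC}, mixed paths give alternatives (1) and (2) --- does not match what actually happens: the three alternatives are distinguished by the position $j_0$ of the covering vertex relative to $M_0=\max\{q_k\mid p_k=i_0\}$ along a single path that crosses $G$, not by the shape of the path.

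Second, and more seriously, the forward direction has a gap. From $X_{i_0}^{\gamma_{(V'',\delta'')}(v_{i_0})}\mid X_{p_1}^{e_{p_1,q_1}}\cdots X_{p_{r+1}}^{e_{p_{r+1},q_{r+1}}}$ one gets $\delta''(v_{i_0,j_0})+\sum_{k=0}^{j_0-1}h_{i_0,k}\le\sum_{k=0}^{M_0}e_{i_0,k}$, where $v_{i_0,j_0}$ is the minimizer in $W_{i_0}(\ffP)$. To conclude that $(V'',\delta'')$ covers the path you must exhibit a vertex of $V''$ that actually lies on $P_r$ and satisfies the weight bound; nothing a priori guarantees that $v_{i_0,j_0}$ is on the path. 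The paper proves $j_0\le M_0$ by supposing otherwise and deriving $\delta''(v_{i_0,j_0})\le 0$ from $e_{i_0,k}\le h_{i_0,k}$, contradicting $\delta''(v_{i_0,j_0})\ge 1$; your proposal never addresses this step, and your sentence invoking ``the $r$-path-vertex-cover requirement on $V''$'' in the forward direction assumes the very conclusion being proved. Without the $j_0\le M_0$ argument and the ensuing three-case verification that $\delta''(v_{i_0,j_0})\le e_{i_0,j_0}$, the forward implication does not go through.
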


\begin{proof}
    $\Longrightarrow$ Assume that $I_r((\Sigma_{r-1}G)_\lambda)R \subseteq P(q(V''),\gamma_{(V'',\delta'')})$. Let $P_r := v_{p_1,q_1} \cdots v_{p_{r+1},q_{r+1}}$ be an $r$-path in $(\Sigma_{r-1}G)_\lambda$. Set 
    \[e_{p_k,q_k} = \left\{\begin{array}{ll}
                \lambda(v_{p_1,q_1}v_{p_2,q_2}) & \text{if }k = 1, \\ 
                    \max(\lambda(v_{p_{k-1},q_{k-1}} v_{p_k,q_k}), \lambda(v_{p_k,q_k}v_{p_{k+1},q_{k+1}})) & \text{if }k = 2,\dots,r, \\
                    \lambda(v_{p_{r},q_r}v_{p_{r+1},q_{r+1}}) & \text{if }k = r+1.
        \end{array}\right.
    \]
    Then $X_{p_1}^{e_{p_1,q_1}} \cdots X_{p_{r+1}}^{e_{p_{r+1},q_{r+1}}} \in \llbracket I_r ((\Sigma_{r-1}G)_\lambda)R \rrbracket \subseteq \llbracket P(q(V''),\gamma_{(V'',\delta'')}) \rrbracket$. Hence
    \[X_{i_0}^{\gamma_{(V'',\delta'')}(v_{i_0})} \mathrel{\big |}X_{p_1}^{e_{p_1,q_1}} \cdots X_{p_{r+1}}^{e_{p_{r+1},q_{r+1}}} \text{ for some }v_{i_0} \in q(V'').\]
    Then $v_{i_0} = v_{p_l}$ for some $l \in \{1,\ldots,r+1\}$, $\gamma_{(V'',\delta'')(v_{i_0})} < \infty$ and so
    \[\min\biggl\{\delta''(v_{i_0,j}) + \sum_{k=0}^{j-1}h_{i_0,k} \mathrel{\Big |} v_{i_0,j} \in W_{i_0}(\ffP)\biggr\} = \gamma_{(V'',\delta'')}(v_{i_0}) \leq \sum_{k=0}^{r+1} \idca_k \cdot e_{p_k,q_k},\] 
    where $\idca_k$ is an indicator function given by
    \[\idca_k = \left\{\begin{array}{ll}1 & \text{if }p_k = i_0, \\ 0 & \text{otherwise.}\end{array}\right.\]
    Hence $v_{p_l} = v_{i_0} \in q(V'')$. Since $P_r$ is an $r$-path in the $(r-1)$-path suspension $\Sigma_{r-1}G$, it is either an $r$-path in $G$, or it is a path with one end located in $G$ and the other located in one of the $(r-1)$-whiskers, or it is a path with two ends located two of the $(r-1)$ whiskers, respectively. Therefore, $P_r$ in $\Sigma_{r-1}G$ is of the following form diagrammatically.
    \[
        \begin{tikzcd}
            v_{p_1,0} \ar[d,dash] \ar[r,dash] &[-5pt] v_{p_1,1} \ar[r,dash] &[-5pt] \cdots \ar[r,dash] &[-5pt] v_{p_1,q_1} \\
            \vdots \ar[d,dash] \\
            v_{p_{r+1},0} \ar[r,dash] & v_{p_{r+1},1} \ar[r,dash] & \cdots \ar[r,dash] & v_{p_{r+1},q_{r+1}}
        \end{tikzcd}
    \]
    where $q_1$ or $q_{r+1}$ could be 0. Let $M_0 := \max_{1 \leq k \leq r+1}\{q_k \mid i_0 = p_k\}$. Then     
    \[
        M_0 = \left\{\begin{array}{ll}
                q_1 & \text{ if }i_0 = p_1, \\
                q_{r+1} & \text{ if }i_0 = p_{r+1}. 
        \end{array} \right.
    \]
    Since $\gamma_{(V'',\delta'')}(v_i) < \infty$, we have that $W_{i_0}(\ffP) \neq \emptyset$. Set $j_0 := \min\{j \mid v_{i_0,j} \in W_{i_0}(\ffP)\}$. Then by Proposition~\ref{equivaDefForGammaVDeltaProp},
    \begin{equation} \label{wRPRedWR}
        \delta''(v_{i_0,j_0}) + \sum_{k=0}^{j_0-1}h_{i_0,k} = \min\biggl\{\delta''(v_{i_0,j}) + \sum_{k=0}^{j-1}h_{i_0,k}\mathrel{\Big |}v_{i_0,j} \in V''\biggr\} \leq \sum_{k=0}^{r+1} \idca_k \cdot e_{p_k,q_k} = \sum_{k=0}^{M_0}e_{i_0,k}.
    \end{equation}
    Suppose that $j_0 > M_0$. Then since $e_{i_0,k} \leq h_{i_0,k}$ for $k = 0,\dots,M_0$, by~\eqref{wRPRedWR}, we have that
    \[\delta''(v_{i_0,j_0}) + \sum_{k=0}^{M_0}e_{i_0,k} \leq \delta''(v_{i_0,j_0}) + \sum_{k=0}^{M_0}h_{i_0,k} \leq \delta''(v_{i_0,j_0}) + \sum_{k=0}^{j_0-1}h_{i_0,k} \leq \sum_{k=0}^{M_0}e_{i_0,k}, \text{ i.e., }\delta''(v_{i_0,j_0}) \leq 0,\]
    contradicting $\delta''(v_{i_0,j_0}) \geq 1$. Hence $j_0 \leq M_0$. Also, we have that there must exist a sub-path of $P_r$ of the form 
    \[
        \begin{tikzcd}
            v_{i_0,0} \ar[r,dash] & v_{i_0,1} \ar[r,dash] & \cdots \ar[r,dash] & v_{i_0,M_0},
        \end{tikzcd}
    \]
    so there exists a vertex in this path of the form $v_{i_0,j_0} = v_{p_k,q_k}$ for some $k$ in $\{1,\ldots,r+1\}$. Hence $v_{p_k,q_k} = v_{i_0,j_0} \in W_i(\ffP) \subseteq V''$ and $v_{i_0,j_0} \in V(P_r)$.
    \begin{enumerate}
        \item Assume that $0 = j_0 < M_0$. Since $\lambda(v_iv_j) \leq \min\{\lambda(v_i,v_{i,1}),\lambda(v_j,v_{j,1})\}$ for all edges $v_iv_j \in E$ and $M_0 \geq 1$, we have that $e_{i_0,0} = \lambda(v_{i_0,0}v_{i_0,1}) = h_{i_0,0}$. Since $v_{i_0,j_0} \in W_{i_0}(\ffP)$, we have that $\delta''(v_{i_0,0}) \leq h_{i_0,0} = e_{i_0,0}$. 
        \item Assume that $0 < j_0 < M_0$. Since $v_{i_0,j_0} \in W_{i_0}(\ffP)$, we have that
            \[\delta''(v_{i_0,j_0}) \leq \max(\lambda(v_{i_0,j_0-1}v_{i_0,j_0}),\lambda(v_{i_0,j_0}v_{i_0,j_0+1})) = e_{i_0,j_0}.\]
        \item 
            Assume that $j_0 = M_0$. Since $e_{i_0,k} \leq h_{i_0,k}$ for $k = 0,\dots,j_0-1$, by~\eqref{wRPRedWR}, we have that
            \[\delta''(v_{i_0,j_0}) + \sum_{k=0}^{j_0-1}e_{i_0,k} \leq \delta''(v_{i_0,j_0}) + \sum_{k=0}^{j_0-1}h_{i_0,k} \leq \sum_{k=0}^{M_0}e_{i_0,k} = \sum_{k=0}^{j_0}e_{i_0,k}, \text{ i.e., }\delta''(v_{i_0,j_0}) \leq e_{i_0,j_0}.\]
    \end{enumerate}
    Thus, we have that $v_{i_0,j_0} \in V'' \cap V(P_r)$ and $\delta''(v_{i_0,j_0}) \leq e_{i_0,j_0}$ by (a), (b), and (c). Thus, $(V'',\delta'')$ is an edge-weighted $r$-path vertex cover of $(\Sigma_{r-1}G)_\lambda$. \par
    $\Longleftarrow$ Assume that $(V'',\delta'')$ is an edge-weighted $r$-path vertex cover of $(\Sigma_{r-1}G)_\lambda$. We need to show that every monomial generator of $I_r((\Sigma_{r-1}G)_\lambda)R$ is in $P(q(V''),\gamma_{(V'',\delta'')})$. Let $\underline X^{\underline b} := X_{i_1}^{e_{i_1,j_1}} \cdots X_{i_{r+1}}^{e_{i_{r+1},j_{r+1}}}$ be a generator corresponding to the $r$-path $P_r := v_{i_1,j_1} \cdots v_{i_{r+1},j_{r+1}}$ in $(\Sigma_{r-1}G)_\lambda$. We need to show that $ \underline X^{\underline b} \in P(q(V''),\gamma_{(V'',\delta'')})$. Note that $P_r$ in $\Sigma_{r-1}G$ is of the following form.
    \[
        \begin{tikzcd}
            v_{i_1,0} \ar[d,dash] \ar[r,dash] &[-5pt] v_{i_1,1} \ar[r,dash] &[-5pt] \cdots \ar[r,dash] &[-5pt] v_{i_1,j_1} \\
            \vdots \ar[d,dash] \\
            v_{i_{r+1},0} \ar[r,dash] & v_{i_{r+1},1} \ar[r,dash] & \cdots \ar[r,dash] & v_{i_{r+1},j_{r+1}}
        \end{tikzcd}
    \]
    where $j_1$ or $j_{r+1}$ may be 0. Since $P_r$ is an $r$-path in $(\Sigma_{r-1}G)_\lambda$ and $(V'',\delta'')$ is an edge-weighted $r$-path vertex cover of $(\Sigma_{r-1}G)_\lambda$, we have that there exists some $l \in \{1,\ldots,r+1\}$ such that $v_{i_l,j_l} \in V''$ and $\delta''(v_{i_l,j_l}) \leq e_{i_l,j_l}$. Hence $v_{i_l,j_l} \in W_{i_l}(\ffP)$ and then
    \[\gamma_{(V'',\delta'')}(v_{i_l}) = \min \biggl\{\delta''(v_{i_l,t}) + \sum_{k = 0}^{t-1} h_{i_l,k}\mathrel{\Big |} v_{i_l,t} \in W_{i_l}(\ffP)\biggr\} \leq \delta''(v_{i_l,j_l}) + \sum_{k = 0}^{j_l-1} h_{i_l,k}.\]
    Let $M_0 := \max_{1 \leq k \leq r+1}\{j_k \mid i_l = i_k\}$. Then $j_l \leq M_0$. Therefore,
    \[\delta''(v_{i_l,j_l}) + \sum_{k = 0}^{j_l-1} e_{i_l,k} \leq e_{i_l,j_l} + \sum_{k=0}^{j_l-1}e_{i_l,k} = \sum_{k=0}^{j_l}e_{i_l,k} \leq \sum_{k=0}^{M_0}e_{i_l.k}, \text{ i.e., } \delta''(v_{i_l,j_l}) + \sum_{k=0}^{j_l-1}e_{i_l,k} \leq \sum_{k=0}^{M_0}e_{i_l,k}.\] 
    \begin{enumerate}
        \item Assume that $j_l = 0$. Then
            \[\gamma_{(V'',\delta'')}(v_{i_l}) \leq \delta''(v_{i_l,j_l}) + \sum_{k = 0}^{j_l-1} h_{i_l,k} = \delta''(v_{i_l,j_l}) + \sum_{k = 0}^{j_l-1} e_{i_l,k} \leq \sum_{k=0}^{M_0}e_{i_l,k} = \sum_{k=0}^{r+1} \idca_{l,k} \cdot e_{i_k,j_k} = b_{i_l},\]
            where $\idca_{l,k} = \left\{\begin{array}{ll}1 & \text{if }j_k = j_l, \\ 0 & \text{otherwise,}\end{array}\right. \fa k = 1,\ldots,r+1$. 
        \item Assume that $j_l > 0$. Then $M_0 \geq 1$. Since $\lambda(v_iv_j) \leq \min\{\lambda(v_i,v_{i,1}),\lambda(v_j,v_{j,1})\}$ for all edges $v_iv_j \in E$, we have that $e_{i_0,0} = \lambda(v_{i_0,0}v_{i_0,1}) = h_{i_0,0}$. Also, since $e_{i_0,k} = h_{i_0,k}$ for $k = 1,\dots,j_l-1$, we have that 
            \[\gamma_{(V'',\delta'')}(v_{i_l}) \leq \delta''(v_{i_l,j_l}) + \sum_{k = 0}^{j_l-1} h_{i_l,k} = \delta''(v_{i_l,j_l}) + \sum_{k = 0}^{j_l-1} e_{i_l,k} \leq \sum_{k=0}^{M_0}e_{i_l,k} = \sum_{k=0}^{r+1} \idca_{l,k} \cdot e_{i_k,j_k} = b_{i_l}.\]
    \end{enumerate}
    Thus, $X_{i_l}^{\gamma_{(V'',\delta'')}(v_{i_l})} \mathrel{\big |}\underline X^{\underline b}$ by (a) and (b). Thus, $\underline X^{\underline b} \in P(q(V''),\gamma_{(V'',\delta'')})$.
\end{proof}

\begin{proposition} \label{writeIntersectionOfWRPIdealWithPqVGammaProp}
    Let $(\Sigma_{r-1}G)_\lambda$ be an edge-weighted $(r-1)$-path suspension of $G_\omega$ such that $\lambda(v_iv_j) \leq \lambda(v_i,v_{i,1})$ and $\lambda(v_iv_j) \leq \lambda(v_j,v_{j,1})$ for all $v_iv_j \in E$. The monomial ideal $I_r((\Sigma_{r-1}G)_{\lambda})R$ can be written as a finite intersection of irreducible ideals of the form $P(q(V'') := \{v_{i_1},\ldots,v_{i_t}\},\gamma_{(V'',\delta'')})$ with $V'' \subseteq V(\Sigma_{r-1}G)$ and $\delta'': V'' \to \bbN$. 
\end{proposition}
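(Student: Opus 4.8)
The plan is to realize $J := I_r((\Sigma_{r-1}G)_\lambda)R$ as the intersection of the ideals $P(q(V''),\gamma_{(V'',\delta'')})$ taken over \emph{all} edge-weighted $r$-path vertex covers $(V'',\delta'')$ of $(\Sigma_{r-1}G)_\lambda$, and then to cut this (a priori infinite) intersection down to a finite one. One containment is immediate: by the $\Longleftarrow$ direction of Theorem~\ref{gammaWRPATHVCIff}, every such cover satisfies $J\subseteq P(q(V''),\gamma_{(V'',\delta'')})$, so $J$ is contained in the intersection $\mathcal I$ of all of them.

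For the reverse containment $\mathcal I\subseteq J$ I would argue contrapositively: given a monomial $\mu=X_1^{c_1}\cdots X_d^{c_d}\notin J$, I build a single edge-weighted $r$-path vertex cover $(V'',\delta'')$ with $\mu\notin P(q(V''),\gamma_{(V'',\delta'')})$. Take $V''=V(\Sigma_{r-1}G)$ and, writing $H_{i,j}:=\sum_{k=0}^{j-1}h_{i,k}$, set
\[\delta''(v_{i,j})=\max\bigl(1,\ c_i+1-H_{i,j}\bigr),\qquad j=0,\dots,r-1.\]
Writing $\ffP:=(V'',\delta'')$, a direct computation from the definition of $\gamma_{(V'',\delta'')}$ shows that $v_{i,j}\in W_i(\ffP)$ precisely for the $j$ with $H_{i,j}\le c_i<H_{i,j+1}$ (and for larger $j$, which contribute strictly more), so that $\gamma_{(V'',\delta'')}(v_i)=c_i+1$ when $c_i<\sum_{k=0}^{r-1}h_{i,k}$ and $\gamma_{(V'',\delta'')}(v_i)=\infty$ otherwise. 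In either case $X_i^{\gamma_{(V'',\delta'')}(v_i)}\nmid\mu$, whence $\mu\notin P(q(V''),\gamma_{(V'',\delta'')})$, as required.

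The substantive point, and the step I expect to be the main obstacle, is verifying that this $(V'',\delta'')$ really is an edge-weighted $r$-path vertex cover; this is exactly where the hypothesis $\mu\notin J$ is spent. Suppose some $r$-path $P:=v_{i_1,j_1}\cdots v_{i_{r+1},j_{r+1}}$ were left uncovered. Since every vertex lies in $V''$, this forces $\delta''(v_{i_l,j_l})>e_{i_l,j_l}$ for all $l$, where the $e_{i_l,j_l}$ are the path-exponents of Definition~\ref{defOfWRPathIdeal}. Because each whisker is a branch attached to $G$ only at its base $v_i$, the vertices of $P$ with first index $i$ form a contiguous segment $v_{i,0},\dots,v_{i,M_i}$; along the lower part of this segment the hypothesis $\lambda(v_iv_j)\le\lambda(v_iv_{i,1})$ (at $v_{i,0}$) together with the fact that interior whisker vertices have degree $2$ yields $e_{i,k}=h_{i,k}$ for $0\le k<M_i$. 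Reading the uncovered inequality at the top vertex $v_{i,M_i}$ and using $\delta''(v_{i,M_i})=\max(1,c_i+1-H_{i,M_i})>e_{i,M_i}\ge 1$ then gives $c_i\ge e_{i,M_i}+H_{i,M_i}=\sum_{k=0}^{M_i}e_{i,k}$. Thus the $X_i$-exponent of the image $p(g_P)$ under Definition~\ref{totallyProjectionMapP} of the generator of $I_r((\Sigma_{r-1}G)_\lambda)$ attached to $P$ is at most $c_i$ for every index $i$ occurring in $P$, so $p(g_P)\mid\mu$ and $\mu\in J$, a contradiction. Hence $(V'',\delta'')$ is a cover and $\mathcal I=J$.

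Finally, to pass from the infinite intersection $\mathcal I$ to a finite one I would invoke the standard fact that every monomial ideal admits a unique finite irredundant decomposition into irreducible monomial ideals, so that discarding the redundant members of $\{P(q(V''),\gamma_{(V'',\delta'')})\}$ leaves only finitely many. Concretely, only the finitely many \emph{minimal} edge-weighted $r$-path vertex covers of Definition~\ref{defOffWRPathVC} can contribute, their weights being bounded by the edge weights $h_{i,j}$ of $(\Sigma_{r-1}G)_\lambda$; each surviving ideal is already of the prescribed form $P(q(V''),\gamma_{(V'',\delta'')})$, which completes the argument.
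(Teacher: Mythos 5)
Your argument is correct in its main thrust but follows a genuinely different route from the paper. The paper starts from the generic finite irreducible decomposition of a monomial ideal supplied by \cite[Theorem 7.5.1]{MR3839602} (one pure power chosen from each generator) and then, for each component $(X_{b_1}^{\beta_{b_1}},\dots,X_{b_s}^{\beta_{b_s}})R$, reverse-engineers a pair $(V'',\delta'')$ --- placing $v_{b_k,M_k}$ at the top of the relevant whisker segment and setting $\delta''(v_{b_k,M_k})=\lambda(v_{b_k,M_k}v_{b_k,M_k-1})$ --- so that $\gamma_{(V'',\delta'')}(v_{b_k})=\beta_{b_k}$; note the paper never needs $(V'',\delta'')$ to be a cover, only that the component has the prescribed shape. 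You instead prove the stronger statement that $I_r((\Sigma_{r-1}G)_\lambda)R$ equals the intersection over \emph{all} edge-weighted $r$-path vertex covers, via an explicit contrapositive construction: given $\mu=X^{\underline c}\notin J$ you build the cover $\delta''(v_{i,j})=\max(1,c_i+1-H_{i,j})$ on all of $V(\Sigma_{r-1}G)$ and check it separates $\mu$. This is essentially a direct proof of the first display of Theorem~\ref{wRPathIdealDecompOfRPATHSuspension}, bypassing the citation to \cite{MR3839602}; the computation $\gamma_{(V'',\delta'')}(v_i)=c_i+1$ (or $\infty$), the identification $e_{i,k}=h_{i,k}$ for $k<M_i$ (which is where the hypothesis $\lambda(v_iv_j)\le\lambda(v_iv_{i,1})$ enters, exactly as in the paper's proof of Theorem~\ref{gammaWRPATHVCIff}), and the deduction $c_i\ge\sum_{k=0}^{M_i}e_{i,k}$ forcing $p(g_P)\mid\mu$ all check out. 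Your approach buys a self-contained, cover-theoretic proof at the cost of more verification; the paper's buys brevity by outsourcing the decomposition.

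One step needs repair: the finiteness. The ``standard fact'' you invoke --- that discarding redundant members of an arbitrary (possibly infinite) intersection of irreducible monomial ideals leaves a finite one --- is false as a general principle: $(X)=\bigcap_{n\ge1}(X,Y^n)$ in $\bbK[X,Y]$, and no finite subfamily suffices. Fortunately your own construction already closes the gap: the ideals it produces are exactly $(X_i^{c_i+1}\mid c_i<\sum_{k=0}^{r-1}h_{i,k})R$, so every exponent appearing is bounded by $\sum_{k=0}^{r-1}h_{i,k}$ and only finitely many distinct ideals arise; intersecting just those finitely many already recaptures $J$. (Alternatively, your remark that only minimal covers contribute can be made rigorous, but it requires the monotonicity statement $P(q(V_1''),\gamma_{(V_1'',\delta_1'')})\subseteq P(q(V_2''),\gamma_{(V_2'',\delta_2'')})$ for $(V_1'',\delta_1'')\le(V_2'',\delta_2'')$, i.e.\ Lemma~\ref{usedForWRPathVCRedundantLemma}, together with the existence of a minimal cover below any cover; you should cite or prove these rather than assert them.) With that one sentence fixed, the proof stands.
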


\begin{proof}
    \cite[Theorem 7.5.1]{MR3839602} gives a decomposition of $I_r((\Sigma_{r-1}G)_{\lambda})R$. Let $(X_{b_1}^{\beta_{b_1}},\dots,X_{b_s}^{\beta_{b_s}})R$ occur in that decomposition. Without loss of generality, assume $b_1,\dots,b_s \in \bbN$ are distinct. We claim that for each exponent $\beta_{b_k}$ we can find at least one $r$-path $P_r$ in $(\Sigma_{r-1}G)_\lambda$ such that the weights on $P_r$ are related to it. Let $k \in \{1,\dots,s\}$. Then by \cite[Theorem 7.5.1]{MR3839602}, there exists a generator $p(X_{i_1,j_1}^{e_{i_1,j_1}} \cdots X_{i_{r+1},j_{r+1}}^{e_{i_{r+1},j_{r+1}}})$ of $I_r((\Sigma_{r-1}G)_{\lambda})R$ with $v_{i_1,j_1} \cdots v_{i_{r+1},j_{r+1}}$ an $r$-path in $(\Sigma_{r-1}G)_{\lambda}$ such that 
    \[\beta_{b_k} = 
        \left\{\begin{array}{ll}
                e_{b_k,0} &\text{if } M_k = 0, \\
                \lambda(v_{b_k,M_k}v_{b_k,M_k-1}) + \sum_{l = 0}^{M_k-1} h_{b_k,l} & \text{if }M_k \geq 1, \\
        \end{array}\right.
    \]
    where $M_k:= \max_{1 \leq n \leq r+1} \{j_n \mid b_k = i_n\} \leq r-1$ and
    \[e_{i_m,j_m} = \left\{\begin{array}{ll}
                \lambda(v_{i_1,j_1}v_{i_{2},j_{2}}) & \text{if }m = 1, \\ 
                    \max(\lambda(v_{i_{m-1},j_{m-1}} v_{i_m,j_m}), \lambda(v_{i_m,j_m}v_{i_{m+1},j_{m+1}})) & \text{if }m = 2,\dots,r, \\
                    \lambda(v_{i_{r},j_r}v_{i_{r+1},j_{r+1}}) & \text{if }m = r+1.
        \end{array}\right.
    \]
    Repeat the process for each $k \in \{1,\dots,s\}$ and set $V'' = \{v_{b_1,M_1},\dots,v_{b_s,M_s}\}$. Then $q(V'') = \{v_{b_1},\dots,v_{b_s}\}$. Define 
    \begin{align*}
        \delta'': V'' &\longrightarrow \bbN\\
        v_{b_k,M_k} &\longmapsto 
        \left\{\begin{array}{ll}
                \lambda(v_{b_k,M_k}v_{b_k,M_k-1}) & \text{if } M_k \geq 1, \\
                \beta_{b_k} (= e_{b_k,0}) &\text{if } M_k = 0,
        \end{array}\right. \fa k = 1,\dots,s. 
    \end{align*}
    We claim that $(X_{b_1}^{\beta_{b_1}},\dots,X_{b_s}^{\beta_{b_s}})R = P(q(V''),\gamma_{(V'',\delta'')})$. It suffices to show that $\gamma_{(V'',\delta'')}(v_{b_k}) = \beta_{b_k}$ for $k = 1,\dots,s$. Let $\ffP := (V'',\delta'')$. By definition, we have that $W_k(\ffP) = \{v_{b_k,M_k}\}$ for $k = 1,\dots,s$.     
    \begin{enumerate}
        \item Assume that $M_k \geq 1$. Then
            \[\gamma_{(V'',\delta'')}(v_{b_k}) = \delta''(v_{b_k,M_k}) + \sum_{l=0}^{M_k-1} h_{b_k,l} = \lambda(v_{b_k,M_k}v_{b_k,M_k-1}) + \sum_{l=0}^{M_k-1} h_{b_k,l} = \beta_{b_k}.\]
        \item Assume that $M_k = 0$. Then
            \[\gamma_{(V'',\delta'')}(v_{b_k}) = \delta''(v_{b_k,0}) + \sum_{l=0}^{0-1} h_{b_k,l} = \delta''(v_{b_k,0}) = \beta_{b_k}. \qedhere\]
    \end{enumerate}
\end{proof}

We use the following example to illustrate the previous theorem and its proof.

\begin{example} 
    Consider the following graph $(\Sigma_2P_1)_\lambda$ as in Example~\ref{exampleOfWRPVSIffSubset}.
    \[
        \begin{tikzcd}
          v_1 \ar[d,dash,"2"'] \ar[r,dash,"2"] & v_{1,1} \ar[r,dash,"5"] & v_{1,2} \\
          v_2 \ar[r,dash,"3"] & v_{2,1} \ar[r,dash,"4"] & v_{2,2}
        \end{tikzcd}
    \]
    Since $I_3((\Sigma_2P_1)_\lambda) = (X_{1,2}^5X_{1,1}^5X_1^{2}X_2^2,X_{1,1}^2X_1^2X_2^3X_{2,1}^3,X_1^2X_2^3X_{2,1}^4X_{2,2}^4)R'$, we have that 
    \[I_3((\Sigma_2P_1))R = (X_1^{12}X_2^2,X_1^4X_2^6,X_1^2X_2^{11})R.\] 
    By \cite[Theorem 7.5.1]{MR3839602}, we have an irreducible but not necessarily an irreducible decomposition
    \begin{align*}
        I_3((\Sigma_2P_1))R &= (X_1^{12},X_1^4,X_1^2)R \cap (X_1^{12},X_1^4,X_2^{11})R \cap (X_1^{12},X_2^6,X_1^2)R \cap (X_1^{12},X_2^6,X_2^{11})R \\ 
        & \ \ \ \ \cap (X_2^{2},X_1^4,X_1^2)R \cap (X_2^{2},X_1^4,X_2^{11})R \cap (X_2^{2},X_2^6,X_1^2)R \cap (X_2^{2},X_2^6,X_2^{11})R \\ 
        &= (X_1^2)R \cap (X_1^4,X_2^{11})R \cap (X_1^2,X_2^6)R \cap (X_1^{12},X_2^6)R \\
        &\ \ \ \ \cap (X_1^2,X_2^2)R \cap (X_1^4,X_2^2)R \cap (X_1^2,X_2^2)R \cap (X_2^2)R.
    \end{align*}
    Consider the monomial ideal $(X_1^4,X_{2}^{11})R$. Then $b_1 = 1,b_2 = 2$, $\beta_{b_1} = \beta_1 = 4$ and $\beta_{b_2} = \beta_2 = 11$. Consider the generator $X_{1,1}^{e_{1,1}}X_{1,0}^{e_{1,0}}X_{2,0}^{e_{2,0}}X_{2,1}^{e_{2,1}} := X_{11}^2X_1^2X_2^3X_{2,1}^3$ of $I_3((\Sigma_2P_1)_\lambda)$. Then $M_1 := 1$ and $\beta_1 = \lambda(v_{1,1}v_{1,0}) + h_{1,0} = 2 + 2 = 4$. Consider the generator $X_{1,0}^{e_{1,0}}X_{2,0}^{e_{2,0}}X_{2,1}^{e_{2,1}}X_{2,2}^{e_{2,2}} := X_1^2X_2^3X_{2,1}^4X_{2,2}^4$ of $I_3((\Sigma_2P_1)_\lambda)$. Then $M_2 := 2$ and $\beta_2 = \lambda(v_{2,2}v_{2,1}) + h_{2,0} + h_{2,1} = 4 + 3 + 4 = 11$. Let $V'' = \{v_{1,1},v_{2,2}\}$ and $\delta'':V'' \longrightarrow \bbN$ given by $v_1 \longmapsto \lambda(v_{1,1}v_{1,0}) = 2$ and $v_{2,2} \longmapsto \lambda(v_{2,2}v_{2,1}) = 4$. Then $q(V'') = \{v_1,v_2\}$, $\gamma_{(V'',\delta'')}(v_1) = \delta''(v_{1,1}) + h_{1,0} = 2+2$ and $\gamma_{(V'',\delta'')}(v_2) = \delta''(v_{2,2}) + h_{2,0} + h_{2,1} = 4 + 3 + 4 = 11$. Hence
    \[P(q(V''),\gamma{(V'',\delta'')}) = P(\{v_1^4,v_2^{11}\}) = (X_1^4,X_2^{11})R.\]
\end{example}

    The next result gives our first decomposition needed for computing $\operatorname{type}(R'/I_r((\Sigma_rG)_\lambda))$.

\begin{theorem} \label{wRPathIdealDecompOfRPATHSuspension}
    Let $(\Sigma_rG)_\lambda$ be an edge-weighted $r$-path suspension of $G_\omega$ such that $\lambda(v_iv_j) \leq \lambda(v_iv_{i,1})$ and $\lambda(v_iv_j) \leq \lambda(v_jv_{j,1})$ for all edges $v_iv_j \in E$. One has 
    \[I_r((\Sigma_{r-1} G)_{\lambda'})R = \bigcap_{(V'',\delta'') \text{ w. $r$-path v. cover of }(\Sigma_{r-1}G)_{\lambda'}}P(q(V''),\gamma_{(V'',\delta'')}), \text{ where } \lambda' = \lambda|_{\Sigma_{r-1}G},\]
    and
    \[I_r((\Sigma_r G)_\lambda)R = \bigcap_{(V'',\delta'') \text{ w. $r$-path v. cover of }(\Sigma_{r-1}G)_{\lambda'}}P(q(V''),\gamma_{(V'',\delta'')})+\ffm^{[\underline a(\lambda)]}.\]
\end{theorem}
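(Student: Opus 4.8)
The plan is to assemble the first equality from the two preceding results and then obtain the second by a direct appeal to Example~\ref{IrSrEIrSrM1PMExample}. First I would record that the hypotheses transfer to $\lambda' = \lambda|_{\Sigma_{r-1}G}$: every edge $v_iv_j \in E$ and every first whisker edge $v_iv_{i,1}$ already lie in $\Sigma_{r-1}G$, so $\lambda'(v_iv_j) = \lambda(v_iv_j)$ and $\lambda'(v_iv_{i,1}) = \lambda(v_iv_{i,1})$, whence the inequalities $\lambda(v_iv_j) \leq \lambda(v_iv_{i,1})$ and $\lambda(v_iv_j) \leq \lambda(v_jv_{j,1})$ pass verbatim to $\lambda'$. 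Thus both Theorem~\ref{gammaWRPATHVCIff} and Proposition~\ref{writeIntersectionOfWRPIdealWithPqVGammaProp} apply to $(\Sigma_{r-1}G)_{\lambda'}$.

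For the inclusion $\subseteq$ in the first equality, I would let $(V'',\delta'')$ range over all edge-weighted $r$-path vertex covers of $(\Sigma_{r-1}G)_{\lambda'}$. By the ``$\Longleftarrow$'' direction of Theorem~\ref{gammaWRPATHVCIff}, each such cover satisfies $I_r((\Sigma_{r-1}G)_{\lambda'})R \subseteq P(q(V''),\gamma_{(V'',\delta'')})$, and intersecting over all of them yields $I_r((\Sigma_{r-1}G)_{\lambda'})R \subseteq \bigcap_{(V'',\delta'')} P(q(V''),\gamma_{(V'',\delta'')})$.

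For the reverse inclusion, I would invoke Proposition~\ref{writeIntersectionOfWRPIdealWithPqVGammaProp} to write $I_r((\Sigma_{r-1}G)_{\lambda'})R$ as a \emph{finite} intersection $\bigcap_{j} P(q(V_j''),\gamma_{(V_j'',\delta_j'')})$. The crucial observation -- the one logical step that drives the argument -- is that each factor of this decomposition contains $I_r((\Sigma_{r-1}G)_{\lambda'})R$, so the ``$\Longrightarrow$'' direction of Theorem~\ref{gammaWRPATHVCIff} forces every $(V_j'',\delta_j'')$ to be an edge-weighted $r$-path vertex cover. Consequently each factor already appears among the terms indexed by all vertex covers, whence $\bigcap_{(V'',\delta'')} P(q(V''),\gamma_{(V'',\delta'')}) \subseteq \bigcap_{j} P(q(V_j''),\gamma_{(V_j'',\delta_j'')}) = I_r((\Sigma_{r-1}G)_{\lambda'})R$. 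Combining the two inclusions gives the first displayed equality.

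Finally, the second equality is immediate: Example~\ref{IrSrEIrSrM1PMExample} gives $I_r((\Sigma_rG)_\lambda)R = I_r((\Sigma_{r-1}G)_{\lambda'})R + \ffm^{[\underline a(\lambda)]}$, and substituting the first equality for $I_r((\Sigma_{r-1}G)_{\lambda'})R$ finishes the proof. I do not anticipate a genuine obstacle here; all of the analytic content lives in Theorem~\ref{gammaWRPATHVCIff} and Proposition~\ref{writeIntersectionOfWRPIdealWithPqVGammaProp}, and the only point requiring care is the upgrade from ``some finite family of covers'' to ``all covers,'' which is exactly what the biconditional in Theorem~\ref{gammaWRPATHVCIff} supplies.
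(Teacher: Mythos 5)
Your argument is essentially the paper's own proof: both establish the first equality by sandwiching $I_r((\Sigma_{r-1}G)_{\lambda'})R$ between the intersection over all edge-weighted $r$-path vertex covers (via the ``$\Longleftarrow$'' direction of Theorem~\ref{gammaWRPATHVCIff}) and the finite intersection supplied by Proposition~\ref{writeIntersectionOfWRPIdealWithPqVGammaProp}, whose factors are forced to be covers by the ``$\Longrightarrow$'' direction. The only difference is that for the second equality the paper also invokes \cite[Theorem 7.5.3]{MR3839602} so that adding $\ffm^{[\underline a(\lambda)]}$ distributes over the intersection (the reading needed later for the irredundant irreducible decomposition), whereas your plain substitution suffices for the formula as literally parenthesized.
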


\begin{proof}
    Since $I_r((\Sigma_rG)_\lambda)R = I_r((\Sigma_{r-1}G)_{\lambda'})R + \ffm^{[\underline a(\lambda)]}$ by Example~\ref{IrSrEIrSrM1PMExample}, we have that by \cite[Theorem 7.5.3]{MR3839602}, it is enough to show that
    \[I_r((\Sigma_{r-1}G)_{\lambda'})R = \bigcap_{(V'',\delta'') \text{ w. $r$-path v. cover of }(\Sigma_{r-1}G)_{\lambda'}} P(q(V''),\gamma_{(V'',\delta'')}).\]
    By Proposition~\ref{writeIntersectionOfWRPIdealWithPqVGammaProp}, the monomial ideal $I_r((\Sigma_{r-1}G)_{\lambda'})R$ can be written as a finite intersection of irreducible ideals of the form $P(q(V'') := \{v_{i_1},\ldots,v_{i_t}\},\gamma_{(V'',\delta'')})$ with $V'' \subseteq V(\Sigma_{r-1}G)$ and $\delta'': V'' \to \bbN$. Then by Theorem~\ref{gammaWRPATHVCIff},
    \begin{align*}
        I_r((\Sigma_{r-1}G)_{\lambda'})R &\subseteq \bigcap_{(V'',\delta'') \text{ w. $r$-path v. cover of }(\Sigma_{r-1}G)_{\lambda'}} P(q(V''),\gamma_{(V'',\delta'')}) \\
        &\subseteq \bigcap_{(V'',\delta'')\text{ w. $r$-path v. cover of }(\Sigma_{r-1}G)_{\lambda'} \text{ in decomp. of } I_r((\Sigma_{r-1}G)_{\lambda'})R} P(q(V''),\gamma_{(V'',\delta'')}) \\
        &= I_r((\Sigma_{r-1}G)_{\lambda'})R.
    \end{align*}
    Therefore,
    \[I_r((\Sigma_{r-1}G)_{\lambda'})R = \bigcap_{(V'',\delta'') \text{ w. $r$-path v. cover of }(\Sigma_{r-1}G)_{\lambda'}} P(q(V''),\gamma_{(V'',\delta'')}). \qedhere \]
\end{proof}

The following example illustrates the previous theorem.

\begin{example}
    Consider the following edge-weighted 3-path suspension $(\Sigma_3P_1)_\lambda$ of the edge-weighted 1-path $(P_1)_\omega = (\begin{tikzcd}v_1 \ar[r,dash,"2"] & v_2\end{tikzcd})$. 
    \[
        \begin{tikzcd}
            v_1 \ar[d,dash,"2"'] \ar[r,dash,"2"] & v_{1,1} \ar[r,dash,"5"] & v_{1,2} \ar[r,dash,"2"] & v_{1,3} \\
            v_2 \ar[r,dash,"3"] & v_{2,1} \ar[r,dash,"4"] & v_{2,2} \ar[r,dash,"2"] & v_{2,3}
        \end{tikzcd}
    \]
    Let $\lambda' = \lambda|_{\Sigma_{2}P_1}$. Since $I_3((\Sigma_2P_1)_{\lambda'}) = (X_{1,2}^5X_{1,1}^5X_1^2X_2^2,X_{1,1}^2X_1^2X_2^3X_{2,1}^3,X_1^2X_2^3X_{2,1}^4X_{2,2}^4)$, by Theorem~\ref{wRPathIdealDecompOfRPATHSuspension}, we have two infinite intersections: 
    \[I_3((\Sigma_2P_1)_{\lambda'})R = (X_1^{12}X_2^2,X_1^4X_2^6,X_1^2X_2^{11})R = \bigcap_{(V'',\delta'') \text{ w. $r$-path v. cover of }(\Sigma_{2}P_1)_{\lambda'}}P(q(V''),\gamma_{(V'',\delta'')}),\]
    and
    \begin{align*}
        I_3((\Sigma_3P_1)_{\lambda'})R &= (X_1^{12}X_2^2,X_1^4X_2^6,X_1^2X_2^{11})R + (X_1^{14},X_2^{13})R \\
        &= \bigcap_{(V'',\delta'') \text{ w. $r$-path v. cover of }(\Sigma_{2}P_1)_{\lambda'}}P(q(V''),\gamma_{(V'',\delta'')}).
    \end{align*}
\end{example}

    The next result is key for our second decomposition result, Corollary~\ref{minDecomOfReductiveWRPathIdealRMinus1}.

\begin{lemma} \label{usedForWRPathVCRedundantLemma}
    Let $\ffp := (V_1'',\delta_1''), \ffP := (V_2'',\delta_2'')$ be such that $V_1'',V_2'' \subseteq V((\Sigma_{r-1}G)_\lambda)$ and $\delta_1'',\delta_2'': V'' \to \bbN$. If $(V_1'',\delta_1'') \leq (V_2'',\delta_2'')$, then $P(q(V_1''),\gamma_{(V_1'',\delta_1'')}) \subseteq P(q(V_2''),\gamma_{(V_2'',\delta_2'')})$.
\end{lemma}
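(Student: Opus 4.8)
The plan is to reduce the ideal containment to a pointwise comparison of exponents, exploiting that both ideals are generated by pure powers of single variables. Recall $P(q(V''),\gamma_{(V'',\delta'')}) = (X_i^{\gamma_{(V'',\delta'')}(v_i)} \mid v_i \in q(V''))R$, with the convention that a generator with exponent $\infty$ is $0$. A pure power $X_i^a$ lies in $(X_j^{\gamma_{(V_2'',\delta_2'')}(v_j)} \mid v_j \in q(V_2''))R$ precisely when $v_i \in q(V_2'')$ and $\gamma_{(V_2'',\delta_2'')}(v_i) \leq a$. Hence it suffices to prove, for every $v_i \in q(V_1'')$, both that (i) $v_i \in q(V_2'')$ and (ii) $\gamma_{(V_2'',\delta_2'')}(v_i) \leq \gamma_{(V_1'',\delta_1'')}(v_i)$.

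First I would unwind the hypothesis $(V_1'',\delta_1'') \leq (V_2'',\delta_2'')$, which by Definition~\ref{defOffWRPathVC} means $V_1'' \subseteq V_2''$ and $\delta_1''(v) \geq \delta_2''(v)$ for all $v \in V_1''$. Statement (i) is then immediate: if $v_i \in q(V_1'')$ then some $v_{i,j} \in V_1'' \subseteq V_2''$, so $v_i \in q(V_2'')$. For (ii) the key observation is that the sets $W_i$ are monotone. Since each $h_{i,j}$ depends only on the ambient weighted graph $(\Sigma_{r-1}G)_\lambda$ and not on the pair, any $v_{i,j} \in W_i(\ffp)$ satisfies $v_{i,j} \in V_1'' \subseteq V_2''$ together with $\delta_2''(v_{i,j}) \leq \delta_1''(v_{i,j}) \leq h_{i,j}$, whence $v_{i,j} \in W_i(\ffP)$. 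That is, $W_i(\ffp) \subseteq W_i(\ffP)$.

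With this containment in hand, (ii) falls out of the defining minimum for $\gamma$. If $\gamma_{(V_1'',\delta_1'')}(v_i) = \infty$ there is nothing to prove, since the corresponding generator on the left is $0$. Otherwise $W_i(\ffp) \neq \emptyset$; choosing $v_{i,j_1} \in W_i(\ffp)$ that realizes the minimum, so that $\gamma_{(V_1'',\delta_1'')}(v_i) = \delta_1''(v_{i,j_1}) + \sum_{k=0}^{j_1-1} h_{i,k}$, the containment $W_i(\ffp) \subseteq W_i(\ffP)$ places $v_{i,j_1}$ in $W_i(\ffP)$. Thus $W_i(\ffP) \neq \emptyset$ and
\[\gamma_{(V_2'',\delta_2'')}(v_i) \leq \delta_2''(v_{i,j_1}) + \sum_{k=0}^{j_1-1} h_{i,k} \leq \delta_1''(v_{i,j_1}) + \sum_{k=0}^{j_1-1} h_{i,k} = \gamma_{(V_1'',\delta_1'')}(v_i),\]
where the first inequality uses that $v_{i,j_1}$ is merely one candidate in the minimum defining $\gamma_{(V_2'',\delta_2'')}(v_i)$, and the middle inequality uses $\delta_2''(v_{i,j_1}) \leq \delta_1''(v_{i,j_1})$.

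I do not expect a genuine obstacle here. The only points requiring care are bookkeeping of the direction of the order $\leq$ on pairs (smaller vertex set but pointwise larger weights) and the harmless treatment of the value $\infty$. The crux is the monotonicity $W_i(\ffp) \subseteq W_i(\ffP)$, which is exactly what lets me evaluate $\gamma_{(V_2'',\delta_2'')}$ at the same vertex $v_{i,j_1}$ that computes $\gamma_{(V_1'',\delta_1'')}$ and thereby obtain the desired inequality of exponents.
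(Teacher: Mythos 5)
Your proposal is correct and follows essentially the same route as the paper: reduce to comparing exponents of pure-power generators, establish the monotonicity $W_i(\ffp) \subseteq W_i(\ffP)$ from $V_1'' \subseteq V_2''$ and $\delta_2''|_{V_1''} \leq \delta_1''$, and then compare the two minima defining $\gamma$ (handling the $\infty$ case separately). Your write-up is in fact slightly more detailed than the paper's, which asserts $W_i(\ffp) \subseteq W_i(\ffP)$ without justification and compares the minima directly rather than via a chosen minimizer.
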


\begin{proof}
    Let $X_i^{\gamma_{(V_1'',\delta_1'')}(v_i)}$ be a nonzero generator of $P(q(V_1''),\gamma_{(V_1'',\delta_1'')})$. Then $\gamma_{(V_1'',\delta_1'')}(v_i) < \infty$, and so $W_{i}(\ffp) \neq \emptyset$. Given $(V_1'',\delta_1'') \leq (V_2'',\delta_2'')$, we have that $W_i(\ffp) \subseteq W_i(\ffP)$, so $W_i(\ffP) \neq \emptyset$. Note that $v_i \in q(V_1'') \subseteq q(V_2'')$ since $V_1'' \subseteq V_2''$, and so $X_i^{\gamma_{(V_2'',\delta_2'')}(v_i)} \in P(q(V_2''),\gamma_{(V_2'',\delta_2'')})$. Since $\delta_1'' \geq \delta_2''|_{V_1''}$, we have that
    \begin{align*}
        \gamma_{(V_1'',\delta_1'')}(v_i) &= \min\biggl\{\delta_1''(v_{i,t})+\sum_{k=0}^{t-1} h_{i,k} \mathrel{\Big |} v_{i,t} \in W_{i_j}(\ffp)\biggr\} \\
        &\geq \min\biggl\{\delta_2''(v_{i,t})+\sum_{k=0}^{t-1} h_{i,k} \mathrel{\Big |} v_{i,t} \in W_{i}(\ffP)\biggr\} = \gamma_{(V_2'',\delta_2'')}(v_i).
    \end{align*}
    Hence $X_i^{\gamma_{(V_2'',\delta_2'')}(v_i)} \mathrel{\big |} X_i^{\gamma_{(V_1'',\delta_1'')}(v_i)}$. Thus, $P(q(V_1''),\gamma_{(V_1'',\delta_1'')}) \subseteq P(q(V_2''),\gamma_{(V_2'',\delta_2'')})$.
\end{proof}

The following example illustrates the previous lemma.

\begin{example}  
    Consider the following two pairs of sets $\ffp := (V_1'',\delta_1'') := \{v_{1,1}^4,v_2^5,v_{2,1}^6\}$ and $\ffP := (V_2'',\delta_2'') := \{v_{1,1}^3,v_{1,2}^6,v_2^5,v_{2,1}^3\}$ of $(\Sigma_2P_1)_\lambda$ as in Example~\ref{exampleOfWRPVSIffSubset}. 
    \[
        \begin{tikzcd}[every matrix/.append style={name=m},
          execute at end picture={
              \draw [<-] (m-1-2) ellipse (0.3cm and 0.25cm);
              \draw [<-] (m-2-1) ellipse (0.3cm and 0.25cm);
              \draw [<-] (m-2-2) ellipse (0.3cm and 0.25cm);
          }]
          v_1 \ar[d,dash,"2"'] \ar[r,dash,"2"] & v_{1,1}^4 \ar[r,dash,"5"] & v_{1,2} \\
          v_2^5 \ar[r,dash,"3"] & v_{2,1}^6 \ar[r,dash,"4"] & v_{2,2}
        \end{tikzcd} \ \ \ \ \ \ \ \ \ \ \ \ 
        \begin{tikzcd}[every matrix/.append style={name=m},
          execute at end picture={
              \draw [<-] (m-1-2) ellipse (0.3cm and 0.25cm);
              \draw [<-] (m-1-3) ellipse (0.3cm and 0.25cm);
              \draw [<-] (m-2-1) ellipse (0.3cm and 0.25cm);
              \draw [<-] (m-2-2) ellipse (0.3cm and 0.25cm);
          }]
          v_1 \ar[d,dash,"2"'] \ar[r,dash,"2"] & v_{1,1}^3 \ar[r,dash,"5"] & v_{1,2}^6 \\
          v_2^5 \ar[r,dash,"3"] & v_{2,1}^3 \ar[r,dash,"4"] & v_{2,2}
        \end{tikzcd}
    \]
    Since $V_1'' \subseteq V_2''$ and $\delta_1'' \geq \delta_2''|_{V_1''}$, we have that $(V_1'',\delta_1'') \leq (V_2'',\delta_2'')$. Similar to Example~\ref{exampleOfWRPVSIffSubset}, we have that $W_1(\ffp) = \{v_{1,1}\}$ and $W_2(\ffp) = \emptyset$. Hence $\gamma_{(V_1'',\delta_1'')}(v_2) = \infty$ and
    \[\gamma_{(V_1'',\delta_1'')}(v_1) = \delta_1''(v_{1,1}) + \sum_{k=0}^{1-1}h_{1,k} = \delta_1''(v_{1,1}) + \max(\lambda(v_1v_2),\lambda(v_1v_{1,1}))  = 4 + \max(2,2) = 5.\]
    Also, since $q(V_1'') = \{v_1,v_2\}$, we have that $P(q(V_1''),\gamma_{(V_1'',\delta_1'')}) = (X_1^5,X_2^\infty)R = (X_1^5)R$. Then from Example~\ref{exampleOfWRPVSIffSubset} we have that
    \[P(q(V_2''),\gamma_{(V_2'',\delta_2'')}) = (X_1^5,X_2^6)R \supseteq (X_1^5)R = P(q(V_1''),\gamma_{(V_1'',\delta_1'')}).\]
\end{example}

    Here is our second decomposition result for computing $\operatorname{type}(R'/I_r((\Sigma_rG)_\lambda))$.

\begin{corollary} \label{minDecomOfReductiveWRPathIdealRMinus1}
    Let $(\Sigma_rG)_\lambda$ be an edge-weighted $r$-path suspension of $G_\omega$ such that $\lambda(v_iv_j) \leq \lambda(v_i,v_{i,1})$ and $\lambda(v_iv_j) \leq \lambda(v_j,v_{j,1})$ for all edges $v_iv_j \in E$. One has 
    \[I_r((\Sigma_{r-1}G)_{\lambda'})R = \bigcap_{(V'',\delta'') \text{ min. w. $r$-path v. cover of }(\Sigma_{r-1}G)_{\lambda'}} P(q(V''),\gamma_{(V'',\delta'')}), \text{ where }\lambda' = \lambda|_{\Sigma_{r-1}G},\]
    and
    \[I_r((\Sigma_rG)_\lambda)R = \bigcap_{(V'',\delta'') \text{ min. w. $r$-path v. cover of }(\Sigma_{r-1}G)_{\lambda'}} P(q(V''),\gamma_{(V'',\delta'')}) + \ffm^{[\underline a(\lambda)]}.\]
\end{corollary}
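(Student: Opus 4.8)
The plan is to deduce Corollary~\ref{minDecomOfReductiveWRPathIdealRMinus1} from Theorem~\ref{wRPathIdealDecompOfRPATHSuspension} by discarding the redundant terms in the intersection there. Theorem~\ref{wRPathIdealDecompOfRPATHSuspension} already expresses $I_r((\Sigma_{r-1}G)_{\lambda'})R$ as the intersection of $P(q(V''),\gamma_{(V'',\delta'')})$ over \emph{all} edge-weighted $r$-path vertex covers $(V'',\delta'')$ of $(\Sigma_{r-1}G)_{\lambda'}$. Since the minimal covers form a subset of all covers, one inclusion is immediate:
\[
    \bigcap_{\text{all } (V'',\delta'')} P(q(V''),\gamma_{(V'',\delta'')})
    \subseteq
    \bigcap_{\text{min. } (V'',\delta'')} P(q(V''),\gamma_{(V'',\delta'')}).
\]
So the whole content of the corollary is the reverse inclusion, i.e.\ that the minimal covers alone already cut out the ideal. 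The key tool for this is Lemma~\ref{usedForWRPathVCRedundantLemma}, which says that $(V_1'',\delta_1'') \leq (V_2'',\delta_2'')$ forces $P(q(V_1''),\gamma_{(V_1'',\delta_1'')}) \subseteq P(q(V_2''),\gamma_{(V_2'',\delta_2'')})$: a larger cover in the $\leq$-order gives a smaller (tighter) ideal.

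First I would fix an arbitrary edge-weighted $r$-path vertex cover $\ffP := (V'',\delta'')$ appearing in the full intersection of Theorem~\ref{wRPathIdealDecompOfRPATHSuspension}, and produce a \emph{minimal} cover $(V_0'',\delta_0'')$ with $(V'',\delta'') \leq (V_0'',\delta_0'')$. Because $\leq$ reverses the direction of containment for these ideals by Lemma~\ref{usedForWRPathVCRedundantLemma}, this minimal cover satisfies $P(q(V''),\gamma_{(V'',\delta'')}) \subseteq P(q(V_0''),\gamma_{(V_0'',\delta_0'')})$, so the term for $\ffP$ is redundant: it contains the term for a minimal cover already present in the right-hand intersection. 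Consequently each factor on the left dominates some factor on the right, yielding
\[
    \bigcap_{\text{min. } (V'',\delta'')} P(q(V''),\gamma_{(V'',\delta'')})
    \subseteq
    \bigcap_{\text{all } (V'',\delta'')} P(q(V''),\gamma_{(V'',\delta'')})
    = I_r((\Sigma_{r-1}G)_{\lambda'})R,
\]
and combined with the trivial inclusion above this gives the first displayed equation of the corollary.

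The main obstacle is the existence step: given any cover $(V'',\delta'')$, one must exhibit a minimal cover lying above it in the order $\leq$ of Definition~\ref{defOffWRPathVC}. This is where care is needed, since $\leq$ allows both shrinking $V''$ and increasing the weights $\delta''$, so a naive descent could fail to terminate. I would argue that the relevant poset of covers above any fixed $(V'',\delta'')$ satisfies the descending chain condition: moving up in $\leq$ either strictly shrinks the finite vertex set $V''$ or strictly increases some weight, and the weights are bounded above along any such chain because an $r$-path vertex cover must keep each covered $r$-path covered (once a vertex's weight exceeds the relevant maximal edge weight $h_{i,j}$ it can no longer contribute, forcing $V''$ to shrink instead). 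Hence every cover is $\leq$ a minimal one and the descent terminates. Once this is in place, the second displayed equation follows immediately by adding $\ffm^{[\underline a(\lambda)]}$ to both sides and invoking Example~\ref{IrSrEIrSrM1PMExample} together with \cite[Theorem 7.5.3]{MR3839602}, exactly as in the proof of Theorem~\ref{wRPathIdealDecompOfRPATHSuspension}.
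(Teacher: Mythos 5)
Your overall strategy is exactly the paper's: start from Theorem~\ref{wRPathIdealDecompOfRPATHSuspension}, note that restricting to minimal covers can only enlarge the intersection, and recover the reverse inclusion by showing that each cover's ideal contains the ideal of some minimal cover, via Lemma~\ref{usedForWRPathVCRedundantLemma}; the second displayed equation then follows from Example~\ref{IrSrEIrSrM1PMExample} and \cite[Theorem 7.5.3]{MR3839602} just as you say. The only substantive difference is that the paper outsources the existence of a minimal cover comparable to a given one to \cite[Lemma 1.11]{MR3335988}, whereas you sketch the termination argument (finitely many vertex removals, weights bounded by the $h_{i,j}$) yourself; that argument is sound and is essentially what the cited lemma proves.

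However, you have the order relation backwards throughout, and as literally written the key inference does not go through. In Definition~\ref{defOffWRPathVC}, $(V_2'',\delta_2'') \leq (V_1'',\delta_1'')$ means $V_2'' \subseteq V_1''$ with \emph{larger} weights, so minimal covers sit at the \emph{bottom} of $\leq$; what you need is a minimal cover $(V_0'',\delta_0'')$ with $(V_0'',\delta_0'') \leq (V'',\delta'')$, not $(V'',\delta'') \leq (V_0'',\delta_0'')$. Moreover, Lemma~\ref{usedForWRPathVCRedundantLemma} \emph{preserves} containment rather than reversing it, so the conclusion you want is $P(q(V_0''),\gamma_{(V_0'',\delta_0'')}) \subseteq P(q(V''),\gamma_{(V'',\delta'')})$ --- that is the containment which makes the factor for $(V'',\delta'')$ redundant. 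The containment you actually assert, $P(q(V''),\gamma_{(V'',\delta'')}) \subseteq P(q(V_0''),\gamma_{(V_0'',\delta_0'')})$, is useless here: to prove $\bigcap_j A_j \subseteq \bigcap_k B_k$ one needs every $B_k$ to contain some $A_j$, not the reverse, and your own next sentence (``it contains the term for a minimal cover'') contradicts the inclusion you just derived. The same flip occurs in your termination argument, where ``moving up in $\leq$'' should be ``moving down.'' The repair is purely a matter of reversing the inequalities consistently; once that is done your proof coincides with the paper's.
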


\begin{proof}
    By Example~\ref{IrSrEIrSrM1PMExample} and \cite[Theorem 7.5.3]{MR3839602}, it is enough to prove that
    \[I_r((\Sigma_{r-1}G)_{\lambda'})R = \bigcap_{(V'',\delta'') \text{ min. w. $r$-path v. cover of }(\Sigma_{r-1}G)_{\lambda'}} P(q(V''),\gamma_{(V'',\delta'')}).\]
    By Theorem~\ref{wRPathIdealDecompOfRPATHSuspension}, it is enough to show that
    \begin{align*}
        &\ \ \ \ \bigcap_{(V'',\delta'') \text{ edge-weighted. $r$-path v. cover of }(\Sigma_{r-1}G)_{\lambda'}} P(q(V''),\gamma_{(V'',\delta'')}) \\
        &= \bigcap_{(V'',\delta'') \text{ min. edge-weighted $r$-path v. cover of }(\Sigma_{r-1}G)_{\lambda'}} P(q(V''),\gamma_{(V'',\delta'')}).
    \end{align*}
    \par ``$\subseteq$'' follows because every minimal edge-weighted $r$-path vertex cover is an edge-weighted $r$-path vertex cover. \par
    ``$\supseteq$'' follows from \cite[Lemma 1.11]{MR3335988} and Lemma~\ref{usedForWRPathVCRedundantLemma}.
\end{proof}

\begin{example} \label{exampleOfDecomOfWRPRIdeal}
    Consider the following edge-weighted 3-path suspension $(\Sigma_3P_1)_\lambda$ of the edge-weighted 1-path $(P_1)_\omega = (\begin{tikzcd}v_1 \ar[r,dash,"2"] & v_2\end{tikzcd})$. 
    \[
        \begin{tikzcd}
            v_1 \ar[d,dash,"2"'] \ar[r,dash,"2"] & v_{1,1} \ar[r,dash,"5"] & v_{1,2} \ar[r,dash,"2"] & v_{1,3} \\
            v_2 \ar[r,dash,"3"] & v_{2,1} \ar[r,dash,"4"] & v_{2,2} \ar[r,dash,"2"] & v_{2,3}
        \end{tikzcd}
    \]
    We depict the minimal edge-weighted 3-path vertex covers of $(\Sigma_2P_1)_{\lambda'}$ with $\lambda' = \lambda|_{\Sigma_2P_1}$ in the following sketches.
    \[
        \begin{tikzcd}[every matrix/.append style={name=m},
          execute at end picture={
              \draw [<-] (m-1-1) ellipse (0.3cm and 0.25cm);
          }]
            v_1^2 \ar[d,dash,"2"'] \ar[r,dash,"2"] & v_{1,1} \ar[r,dash,"5"] & v_{1,2} \\
            v_2 \ar[r,dash,"3"] & v_{2,1} \ar[r,dash,"4"] & v_{2,2} 
        \end{tikzcd} \ \ \ \ \ \ \ \
        \begin{tikzcd}[every matrix/.append style={name=m},
          execute at end picture={
              \draw [<-] (m-2-1) ellipse (0.3cm and 0.25cm);
          }]
            v_1 \ar[d,dash,"2"'] \ar[r,dash,"2"] & v_{1,1} \ar[r,dash,"5"] & v_{1,2} \\
            v_2^2 \ar[r,dash,"3"] & v_{2,1} \ar[r,dash,"4"] & v_{2,2} 
        \end{tikzcd} \ \ \ \ \ \ \ \
        \begin{tikzcd}[every matrix/.append style={name=m},
          execute at end picture={
              \draw [<-] (m-1-2) ellipse (0.3cm and 0.25cm);
              \draw [<-] (m-2-1) ellipse (0.3cm and 0.25cm);
          }]
            v_1 \ar[d,dash,"2"'] \ar[r,dash,"2"] & v_{1,1}^5 \ar[r,dash,"5"] & v_{1,2} \\
            v_2^3 \ar[r,dash,"3"] & v_{2,1} \ar[r,dash,"4"] & v_{2,2} 
        \end{tikzcd}
    \]
    \[
        \begin{tikzcd}[every matrix/.append style={name=m},
          execute at end picture={
              \draw [<-] (m-1-3) ellipse (0.3cm and 0.25cm);
              \draw [<-] (m-2-1) ellipse (0.3cm and 0.25cm);
          }]
            v_1 \ar[d,dash,"2"'] \ar[r,dash,"2"] & v_{1,1} \ar[r,dash,"5"] & v_{1,2}^5 \\
            v_2^3 \ar[r,dash,"3"] & v_{2,1} \ar[r,dash,"4"] & v_{2,2} 
        \end{tikzcd} \ \ \ \ \ \ \ \ 
        \begin{tikzcd}[every matrix/.append style={name=m},
          execute at end picture={
              \draw [<-] (m-1-2) ellipse (0.3cm and 0.25cm);
              \draw [<-] (m-2-2) ellipse (0.3cm and 0.25cm);
          }]
            v_1 \ar[d,dash,"2"'] \ar[r,dash,"2"] & v_{1,1}^2 \ar[r,dash,"5"] & v_{1,2} \\
            v_2 \ar[r,dash,"3"] & v_{2,1}^4 \ar[r,dash,"4"] & v_{2,2} 
        \end{tikzcd} \ \ \ \ \ \ \ \
        \begin{tikzcd}[every matrix/.append style={name=m},
          execute at end picture={
              \draw [<-] (m-1-2) ellipse (0.3cm and 0.25cm);
              \draw [<-] (m-2-2) ellipse (0.3cm and 0.25cm);
          }]
            v_1 \ar[d,dash,"2"'] \ar[r,dash,"2"] & v_{1,1}^5 \ar[r,dash,"5"] & v_{1,2} \\
            v_2 \ar[r,dash,"3"] & v_{2,1}^3 \ar[r,dash,"4"] & v_{2,2} 
        \end{tikzcd}
    \]
    \[
        \begin{tikzcd}[every matrix/.append style={name=m},
          execute at end picture={
              \draw [<-] (m-1-3) ellipse (0.3cm and 0.25cm);
              \draw [<-] (m-2-2) ellipse (0.3cm and 0.25cm);
          }]
            v_1 \ar[d,dash,"2"'] \ar[r,dash,"2"] & v_{1,1} \ar[r,dash,"5"] & v_{1,2}^5 \\
            v_2 \ar[r,dash,"3"] & v_{2,1}^3 \ar[r,dash,"4"] & v_{2,2} 
        \end{tikzcd} \ \ \ \ \ \ \ \
        \begin{tikzcd}[every matrix/.append style={name=m},
          execute at end picture={
              \draw [<-] (m-1-2) ellipse (0.3cm and 0.25cm);
              \draw [<-] (m-2-3) ellipse (0.3cm and 0.25cm);
          }]
            v_1 \ar[d,dash,"2"'] \ar[r,dash,"2"] & v_{1,1}^2 \ar[r,dash,"5"] & v_{1,2} \\
            v_2^2 \ar[r,dash,"3"] & v_{2,1} \ar[r,dash,"4"] & v_{2,2}^4 
        \end{tikzcd}
    \]
    Since $I_3((\Sigma_2P_1)_{\lambda'}) = (X_{1,2}^5X_{1,1}^5X_1^2X_2^2,X_{1,1}^2X_1^2X_2^3X_{2,1}^3,X_1^2X_2^3X_{2,1}^4X_{2,2}^4)$, by Corollary~\ref{minDecomOfReductiveWRPathIdealRMinus1}, we have that
    \begin{align*}
        I_3((\Sigma_2P_1)_{\lambda'})R &= (X_1^{12}X_2^2,X_1^4X_2^6,X_1^2X_2^{11})R = (X_1^2)R \cap (X_2^2)R \cap (X_1^7,X_2^3)R \cap (X_1^{12},X_2^3)R \\
    & \ \ \ \ \cap (X_1^4,X_2^7) \cap (X_1^7,X_2^6) \cap (X_1^{12},X_2^6)R \cap (X_1^4,X_2^{11})R.
    \end{align*}
    Thus, the first decomposition in Corollary~\ref{minDecomOfReductiveWRPathIdealRMinus1} may be redundant.
\end{example}

    In light of the preceding example, we define another order from which we can produce an irredundant decomposition. Lemma~\ref{MMinWRPathVCLemma} is the key for understanding how this ordering helps with irredundancy.

\begin{definition} 
    Given two minimal edge-weighted $r$-path vertex covers $(V_1'',\delta_1''),(V_2'',\delta_2'')$ of $(\Sigma_{r-1}G)_\lambda$, we write $(V_1'',\delta_1'') \leq_{\mathcal p} (V_2'',\delta_2'')$ if $q(V_1'') \subseteq q(V_2'')$ and $\gamma_{(V_1'',\delta_1'')} \geq \gamma_{(V_2'',\delta_2'')}|_{q(V_1'')}$. A minimal edge-weighted $r$-path vertex cover $(V'',\delta'')$ is \emph{$\mathcal p$-minimal} if there is not another minimal edge-weighted $r$-path vertex cover $(V''',\delta''')$ such that $(V'',\delta'') <_{\mathcal p} (V''',\delta''')$.
\end{definition}

\begin{example} \label{pMinimalExample}
    Consider the following two minimal edge-weighted 3-path vertex covers $(V_1'',\delta_1'') := \{v_{1,1}^5,v_2^3\}$ and $(V_2'',\delta_2'') := \{v_{1,2}^5,v_2^3\}$ of $(\Sigma_2P_1)_\lambda$ as in Example~\ref{exampleOfWRPVSIffSubset}. 
    \[
        \begin{tikzcd}[every matrix/.append style={name=m},
          execute at end picture={
              \draw [<-] (m-1-3) ellipse (0.3cm and 0.25cm);
              \draw [<-] (m-2-1) ellipse (0.3cm and 0.25cm);
          }]
            v_1 \ar[d,dash,"2"'] \ar[r,dash,"2"] & v_{1,1} \ar[r,dash,"5"] & v_{1,2}^5 \\
            v_2^3 \ar[r,dash,"3"] & v_{2,1} \ar[r,dash,"4"] & v_{2,2} 
        \end{tikzcd} \ \ \ \ \ \ \ \ \ \ \ \ 
        \begin{tikzcd}[every matrix/.append style={name=m},
          execute at end picture={
              \draw [<-] (m-1-3) ellipse (0.3cm and 0.25cm);
              \draw [<-] (m-2-2) ellipse (0.3cm and 0.25cm);
          }]
            v_1 \ar[d,dash,"2"'] \ar[r,dash,"2"] & v_{1,1} \ar[r,dash,"5"] & v_{1,2}^5 \\
            v_2 \ar[r,dash,"3"] & v_{2,1}^3 \ar[r,dash,"4"] & v_{2,2} 
        \end{tikzcd}
    \]
    Then $q(V_1'') = \{v_1,v_2\} = q(V_2'')$. Since 
    \[\gamma_{(V_1'',\delta_1'')}(v_1) = \delta_1''(v_{1,2}) + h_{1,1} + h_{1,0} = 5 + 5 + 2 = \delta_2''(v_{1,2}) + h_{1,1} + h_{1,0} = \gamma_{(V_2'',\delta_2'')}(v_1),\]
    and $\gamma_{(V_1'',\delta_1'')}(v_2) = \delta_1''(v_2) = 3 < 3 + 3 = \delta_2''(v_{2,1}) + h_{2,0} = \gamma_{(V_2'',\delta_2'')}(v_2)$, we have that $\gamma_{(V_1'',\delta_1'')} < \gamma_{(V_2'',\delta_2'')}$. Thus, $(V_1'',\delta_1'') >_{\mathcal p} (V_2'',\delta_2'')$. Hence $(V_1'',\delta_1'')$ is not $\mathcal p$-minimal. 
\end{example}

\begin{lemma} \label{carEqualityOfMinAndPMinLemmaW}
    Let $\ffp := (W',\delta'), \ffP := (W'',\delta'')$ be two minimal edge-weighted $r$-path vertex covers of $(\Sigma_{r-1}G)_\lambda$ such that $(W'',\delta'') \leq_{\mathcal p} (W',\delta')$, then $\abs{(W'',\delta'')}  = \abs{(W',\delta')}$ and $q(W'') = q(W')$. 
\end{lemma}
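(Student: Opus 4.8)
The plan is to prove the two assertions separately, reducing the cardinality claim to the statement that a minimal edge-weighted $r$-path vertex cover meets each whisker in at most one vertex. Granting this one-vertex-per-whisker property, the map $q$ is injective on such a cover, so $\abs{(W,\delta)} = \abs{W} = \abs{q(W)}$; hence once I show $q(W'') = q(W')$, the chain $\abs{(W'',\delta'')} = \abs{q(W'')} = \abs{q(W')} = \abs{(W',\delta')}$ follows at once. So the real content is (i) $q(W'') = q(W')$ and (ii) the one-vertex-per-whisker property. Throughout I use the standing hypothesis $\lambda(v_iv_j) \leq \lambda(v_iv_{i,1})$ and $\lambda(v_iv_j) \leq \lambda(v_jv_{j,1})$, under which Theorem~\ref{gammaWRPATHVCIff} is available.

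For (i), I already have $q(W'') \subseteq q(W')$ from the definition of $\leq_{\mathcal p}$, and I would obtain the reverse inclusion by contradicting minimality of $(W',\delta')$. Form the pair $(\widetilde W, \widetilde\delta)$ by keeping only those vertices of $W'$ lying on whiskers over $q(W'')$, that is $\widetilde W = W' \cap \{v_{i,j} \mid v_i \in q(W'')\}$ and $\widetilde\delta = \delta'|_{\widetilde W}$. Since $\gamma_{(W',\delta')}(v_i)$ depends only on the part of $W'$ in the $v_i$-whisker (Proposition~\ref{equivaDefForGammaVDeltaProp}), one gets $q(\widetilde W) = q(W'')$ and $\gamma_{(\widetilde W,\widetilde\delta)} = \gamma_{(W',\delta')}|_{q(W'')}$. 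Then the $\leq_{\mathcal p}$ hypothesis $\gamma_{(W'',\delta'')} \geq \gamma_{(W',\delta')}|_{q(W'')}$ together with $I_r((\Sigma_{r-1}G)_\lambda)R \subseteq P(q(W''),\gamma_{(W'',\delta'')})$ (Theorem~\ref{gammaWRPATHVCIff}, as $(W'',\delta'')$ is a cover) yields
\[
  I_r((\Sigma_{r-1}G)_\lambda)R \subseteq P(q(W''),\gamma_{(W'',\delta'')}) \subseteq P(q(W''),\gamma_{(W',\delta')}|_{q(W'')}) = P(q(\widetilde W),\gamma_{(\widetilde W,\widetilde\delta)}),
\]
the middle inclusion holding because larger exponents shrink the ideal. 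By the forward implication of Theorem~\ref{gammaWRPATHVCIff}, $(\widetilde W,\widetilde\delta)$ is itself an edge-weighted $r$-path vertex cover. If $q(W'') \subsetneq q(W')$ then $\widetilde W \subsetneq W'$, so $(\widetilde W,\widetilde\delta) < (W',\delta')$, contradicting minimality; hence $q(W'') = q(W')$.

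For (ii), let $(W,\delta)$ be any minimal cover and suppose $v_{i,a}, v_{i,b} \in W$ with $a < b$. First, minimality forces $\delta(v_{i,c}) \leq h_{i,c}$ for every $v_{i,c} \in W$, since otherwise $v_{i,c}$ covers no $r$-path (each path exponent $e_{i,c}$ is at most $h_{i,c}$) and could be deleted. I then claim $v_{i,b}$ is redundant, so deleting it leaves a cover — the contradiction. The geometric point is that in $\Sigma_{r-1}G$ any $r$-path through a whisker vertex $v_{i,b}$ with $b \geq 1$ must contain the entire inner segment $v_{i,0},v_{i,1},\dots,v_{i,b}$, because the whisker is a dead-end path of length $r-1 < r$ and an $r$-path cannot lie inside it; in particular such a path contains $v_{i,a}$. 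Using $\lambda(v_iv_j) \leq \lambda(v_iv_{i,1})$, one checks that along such a path the exponent at $v_{i,a}$ equals $h_{i,a}$ — the $G$-edge meeting $v_{i,0}$ is dominated, and every $v_{i,a}$ with $a \geq 1$ is interior to the segment with both whisker-neighbours present — so $\delta(v_{i,a}) \leq h_{i,a} = e_{i,a}$ and $v_{i,a}$ already covers the path. Thus every $r$-path covered by $v_{i,b}$ is covered by $v_{i,a}$, making $v_{i,b}$ removable and contradicting minimality; hence each whisker contributes at most one vertex and $\abs{W} = \abs{q(W)}$.

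I expect the main obstacle to be the whisker analysis in (ii): verifying in every position that the path exponent $e_{i,a}$ at the inner vertex equals $h_{i,a}$, distinguishing whether $v_{i,0}$ occurs as an endpoint or an interior vertex of $P_r$ and treating the boundary case $b = r-1$. This is exactly where the edge-weight hypothesis $\lambda(v_iv_j) \leq \lambda(v_iv_{i,1})$ enters, and although each case is elementary the bookkeeping must be done carefully to conclude $\delta(v_{i,a}) \leq e_{i,a}$ uniformly.
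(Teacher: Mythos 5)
Your proof is correct, but it reaches the key conclusion $q(W'')=q(W')$ by a genuinely different route than the paper. The paper argues combinatorially: assuming some $v_{i,j}\in W'$ has $v_i\notin q(W'')$, minimality of $(W',\delta')$ produces an $r$-path covered \emph{only} by $v_{i,j}$ among the vertices of $W'$; since $(W'',\delta'')$ must cover that path by some $v_{k,l}$ with $k\neq i$, the inequality $\gamma_{(W',\delta')}(v_k)\leq\gamma_{(W'',\delta'')}(v_k)$ forces the unique $W'$-vertex on the $k$-th whisker to cover the same path, a contradiction. You instead restrict $W'$ to the whiskers over $q(W'')$ and certify that the restriction is still a cover by passing through the ideal-theoretic characterization of Theorem~\ref{gammaWRPATHVCIff} together with the containment $P(q(W''),\gamma_{(W'',\delta'')})\subseteq P(q(W''),\gamma_{(W',\delta')}|_{q(W'')})$; this is clean and essentially re-derives Lemma~\ref{usedForWRPathVCRedundantLemma} along the way, but it does import the standing weight hypothesis $\lambda(v_iv_j)\leq\min\{\lambda(v_iv_{i,1}),\lambda(v_jv_{j,1})\}$, which Theorem~\ref{gammaWRPATHVCIff} requires and which the lemma as stated does not assume (it does hold everywhere the lemma is applied, and your whisker analysis for the $a=0$ case suggests the paper's own unproved one-vertex-per-whisker assertion quietly needs it too). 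A further point in your favor: the paper simply asserts that a minimal cover meets each whisker at most once, whereas you supply the dead-end-segment argument showing $e_{i,a}=h_{i,a}$ along any $r$-path through a deeper whisker vertex, which is exactly the verification needed to make that assertion rigorous. In short, your version trades the paper's localized path-chasing for a global algebraic argument plus a careful proof of the whisker property; both are valid, and yours is arguably more self-contained at the cost of an extra hypothesis.
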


\begin{proof}
    Since $W'$ is a minimal $r$-path vertex cover of $\Sigma_{r-1}G$, for distinct $v_{i_1,j_1},v_{i_2,j_2} \in W'$, we have that $i_1 \neq i_2$. Also, since $q(W'') \subseteq q(W')$, $\abs{W''} = \abs{q(W'')} \leq \abs{q(W')} = \abs{W'}$. Suppose that $\abs{W''} < \abs{W'}$. Then there exists $v_{i,j} \in W'$ such that $v_i \not \in q(W'')$. Since $W'$ is a minimal $r$-path vertex cover of $\Sigma_{r-1}G$, there is an $r$-path $P_r$ in $\Sigma_{r-1}G$ that can only be covered by $v_{i,j}$. By assumption, $P_r$ can be covered by some $v_{k,l} \in W''$, so $v_k \in q(W'')$. Also, since $v_i \not \in q(W'')$, we have that $k \neq i$. Since $\gamma_{W'}(v_k) \leq \gamma_{W''}(v_k)$, we have that $v_{k,t} \in W'$ for some $t = \gamma_{W'}(v_k) \leq \gamma_{W''}(v_k) = l$. Note that $P_r$ can also be covered by $v_{k,t} \in W'$, a contradiction. Hence $\abs{W''} = \abs{W'}$ and thus $\abs{q(W'')} = \abs{q(W')}$. Since $q(W'') \subseteq q(W')$, we have that $q(W'') = q(W')$.
\end{proof}

The following example illustrates the previous lemma.

\begin{example}
    Consider the following two minimal edge-weighted 3-path vertex covers $(V_1'',\delta_1'') := \{v_{1,1}^5,v_2^3\}$ and $(V_2'',\delta_2'') := \{v_{1,2}^5,v_2^3\}$ of $(\Sigma_2P_1)_\lambda$ as in Example~\ref{pMinWRPVCByCriterionAndComputePMinExample}\ref{pMinWRPVCByCriterionAndComputePMinExamplea}. 
    \[
        \begin{tikzcd}[every matrix/.append style={name=m},
          execute at end picture={
              \draw [<-] (m-1-3) ellipse (0.3cm and 0.25cm);
              \draw [<-] (m-2-1) ellipse (0.3cm and 0.25cm);
          }]
            v_1 \ar[d,dash,"2"'] \ar[r,dash,"2"] & v_{1,1} \ar[r,dash,"5"] & v_{1,2}^5 \\
            v_2^3 \ar[r,dash,"3"] & v_{2,1} \ar[r,dash,"4"] & v_{2,2} 
        \end{tikzcd} \ \ \ \ \ \ \ \ \ \ \ \ 
        \begin{tikzcd}[every matrix/.append style={name=m},
          execute at end picture={
              \draw [<-] (m-1-3) ellipse (0.3cm and 0.25cm);
              \draw [<-] (m-2-2) ellipse (0.3cm and 0.25cm);
          }]
            v_1 \ar[d,dash,"2"'] \ar[r,dash,"2"] & v_{1,1} \ar[r,dash,"5"] & v_{1,2}^5 \\
            v_2 \ar[r,dash,"3"] & v_{2,1}^3 \ar[r,dash,"4"] & v_{2,2} 
        \end{tikzcd}
    \]
    By Example~\ref{pMinWRPVCByCriterionAndComputePMinExample}\ref{pMinWRPVCByCriterionAndComputePMinExamplea}, $(V_1'',\delta_1'') < _{\mathcal p} (V_2'',\delta_2'')$. Then $\abs{(V_1'',\delta_1'')} = \abs{\{v_{1,2},v_2\}} = 2 = \abs{\{v_{1,2},v_{2,1}\}} =\abs{(V_2'',\delta_2'')}$ and $q(V_1'') = \{v_1,v_2\} = q(V_2'')$.
\end{example}

    The following theorem can be used as an algorithm to find the set of $\mathcal p$-minimal edge-weighted $r$-path vertex covers of $(\Sigma_{r-1}G)_\lambda$ from the set of minimal edge-weighted $r$-path vertex covers.

\begin{theorem} \label{criterionForPMinimalityThm}
    Let $\ffp := (V_1'',\delta_1''), \ffP := (V_2'',\delta_2'')$ be two minimal edge-weighted $r$-path vertex covers of $(\Sigma_{r-1}G)_\lambda$. Then $(V_1'',\delta_1'') \leq_{\mathcal p} (V_2'',\delta_2'')$ if and only if $q(V_1'') = q(V_2'')$ and for any $v_{i_l} \in q(V_1'')$: $j_{1,l} > j_{2,l}$ or $j_{1,l} = j_{2,l}$ and $\delta_1''(v_{i_l,j_{1,l}}) \geq \delta_2''(v_{i_l,j_{2,l}})$ with $j_{1,l} := \{j \mid v_{i_l,j} \in V_1''\}$ and $j_{2,l} = \{j \mid v_{i_l,j} \in V_2''\}$. 
\end{theorem}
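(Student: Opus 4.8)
The plan is to reduce the comparison of $\gamma$-values to a coordinate-wise comparison and then to an elementary inequality in each coordinate $v_{i_l}$. Throughout I would use two structural features of a minimal edge-weighted $r$-path vertex cover $(V'',\delta'')$. First, as noted at the start of the proof of Lemma~\ref{carEqualityOfMinAndPMinLemmaW}, for each index $i$ there is at most one $j$ with $v_{i,j}\in V''$, so the indices $j_{1,l}$ and $j_{2,l}$ are genuinely well defined as single integers. Second, minimality forces $\gamma_{(V'',\delta'')}(v_i)<\infty$ for every $v_i\in q(V'')$: if the unique vertex $v_{i,j}\in V''$ had $\delta''(v_{i,j})>h_{i,j}$, then since the covering threshold of Definition~\ref{defOffWRPathVC} at $v_{i,j}$ along any $r$-path through it is at most $h_{i,j}$, that vertex would satisfy no covering condition on any $r$-path, so deleting it would leave a strictly smaller edge-weighted $r$-path vertex cover, a contradiction. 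Hence $v_{i,j}\in W_i$, and by Proposition~\ref{equivaDefForGammaVDeltaProp} one has $\gamma_{(V'',\delta'')}(v_i)=\delta''(v_{i,j})+\sum_{k=0}^{j-1}h_{i,k}$ with $\delta''(v_{i,j})\le h_{i,j}$.

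With these in hand, fix $v_{i_l}\in q(V_1'')\cap q(V_2'')$ and abbreviate $a_1=\delta_1''(v_{i_l,j_{1,l}})$, $a_2=\delta_2''(v_{i_l,j_{2,l}})$, and $S(j)=\sum_{k=0}^{j-1}h_{i_l,k}$, so that $\gamma_{\ffp}(v_{i_l})=a_1+S(j_{1,l})$ and $\gamma_{\ffP}(v_{i_l})=a_2+S(j_{2,l})$, writing $\gamma_{\ffp}:=\gamma_{(V_1'',\delta_1'')}$ and $\gamma_{\ffP}:=\gamma_{(V_2'',\delta_2'')}$. The core claim I would isolate and prove first is: $\gamma_{\ffp}(v_{i_l})\ge\gamma_{\ffP}(v_{i_l})$ if and only if either $j_{1,l}>j_{2,l}$, or $j_{1,l}=j_{2,l}$ and $a_1\ge a_2$. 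When $j_{1,l}=j_{2,l}$ the sums agree and the equivalence is immediate. When $j_{1,l}>j_{2,l}$ I would telescope $S(j_{1,l})-S(j_{2,l})=\sum_{k=j_{2,l}}^{j_{1,l}-1}h_{i_l,k}\ge h_{i_l,j_{2,l}}\ge a_2$, whence $\gamma_{\ffp}(v_{i_l})-\gamma_{\ffP}(v_{i_l})\ge a_1\ge 1>0$; the case $j_{1,l}<j_{2,l}$ is symmetric and yields the strict reverse inequality. Thus exactly the three stated regimes occur, with strictness in the two unequal-index cases.

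Then I would assemble the theorem from the core claim. For the backward implication, assume $q(V_1'')=q(V_2'')$ together with the stated coordinate conditions; then $q(V_1'')\subseteq q(V_2'')$ is trivial, and the core claim gives $\gamma_{\ffp}(v_{i_l})\ge\gamma_{\ffP}(v_{i_l})$ for every $v_{i_l}\in q(V_1'')$, which is precisely $\ffp\le_{\mathcal p}\ffP$. For the forward implication, assume $\ffp\le_{\mathcal p}\ffP$; Lemma~\ref{carEqualityOfMinAndPMinLemmaW} upgrades the containment $q(V_1'')\subseteq q(V_2'')$ to the equality $q(V_1'')=q(V_2'')$, and for each $v_{i_l}$ the defining inequality $\gamma_{\ffp}(v_{i_l})\ge\gamma_{\ffP}(v_{i_l})$ feeds into the core claim to produce the required dichotomy on $(j_{1,l},a_1)$ versus $(j_{2,l},a_2)$.

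The step I expect to be the main obstacle is the finiteness/uniqueness preliminary, and in particular the redundancy argument that no vertex of a minimal cover can carry weight exceeding its maximal adjacent edge weight; once finiteness of $\gamma$ on $q(V'')$ and the closed formula of Proposition~\ref{equivaDefForGammaVDeltaProp} are secured, the remaining case analysis is just the short telescoping estimate above. A secondary point to handle carefully is that the forward direction must genuinely \emph{exclude} $j_{1,l}<j_{2,l}$ rather than merely permit $j_{1,l}\ge j_{2,l}$, which is exactly why the strictness in the unequal-index cases of the core claim is needed.
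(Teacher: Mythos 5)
Your proposal is correct and follows essentially the same route as the paper: both reduce the relation $\leq_{\mathcal p}$ via Lemma~\ref{carEqualityOfMinAndPMinLemmaW} and the closed formula of Proposition~\ref{equivaDefForGammaVDeltaProp} to the coordinatewise comparison of $\delta''(v_{i_l,j_{l}})+\sum_{k<j_l}h_{i_l,k}$, and then settle that comparison by the same telescoping estimate, using minimality to bound $\delta''(v_{i_l,j_l})$ by $h_{i_l,j_l}$ in the two unequal-index cases. Your explicit preliminary that a minimal cover contains at most one vertex per whisker and that $\gamma$ is finite on $q(V'')$ is a welcome tightening of details the paper leaves implicit, but it does not change the argument.
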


\begin{proof}
    By Lemma~\ref{carEqualityOfMinAndPMinLemmaW}, $(V_1'',\delta_1'') \leq_{\mathcal p} (V_2'',\delta_2'')$ if and only if $q(V_1'') = q(V_2'')$ and $\gamma_{(V_1'',\delta_1'')}|_{q(V_1'')} \geq \gamma_{(V_2'',\delta_2'')}|_{q(V_1'')}$ if and only if $q(V_1'') = q(V_2'')$ and for any $v_{i_l} \in q(V_1'')$, $\gamma_{(V_1'',\delta_1'')}(v_{i_l}) \geq \gamma_{(V_2'',\delta_2'')}(v_{i_l})$ if and only if $q(V_1'') = q(V_2'')$ and for any $v_{i_l} \in q(V_1'')$, $\delta_1''(v_{i_l,j_{1,l}})+\sum_{k=0}^{j_{1,l}-1}h_{i_l,k} \geq \delta_2''(v_{i_l,j_{2,l}})+\sum_{k=0}^{j_{2,l}-1}h_{i_l,k}$ by Proposition~\ref{equivaDefForGammaVDeltaProp}. We claim that For $v_{i_l} \in q(V_1'')$, $\delta_1''(v_{i_l,j_{1,l}})+\sum_{k=0}^{j_{1,l}-1}h_{i_l,k} \geq \delta_2''(v_{i_l,j_{2,l}})+\sum_{k=0}^{j_{2,l}-1}h_{i_l,k}$ if and only if $j_{1,l} > j_{2,l}$, or $j_{1,l} = j_{2,l}$ and $\delta_1''(v_{i_l,j_{1,l}}) \geq \delta_2''(v_{i_l,j_{2,l}})$. Then we are done. \par 
    $\Longleftarrow$ Assume that $j_{1,l} > j_{2,l}$, or $j_{1,l} = j_{2,l}$ and $\delta_1''(v_{i_l,j_{1,l}}) \geq \delta_2''(v_{i_l,j_{2,l}})$. Then
        \[\alpha := \bigg(\delta_1''(v_{i_l,j_{1,l}}) + \sum_{k=0}^{j_{1,l}-1}h_{i_l,k}\bigg) - \biggl(\delta_2''(v_{i_l,j_{2,l}}) + \sum_{k=0}^{j_{2,l}-1}h_{i_l,k}\biggr) = \delta_1''(v_{i_l,j_{1,l}}) - \delta_2''(v_{i_l,j_{2,l}}) + \sum_{k = j_{2,l}}^{j_{1,l}-1} h_{i_l,k}.\]
        To prove our statement, it is equivalent to show that $\alpha \geq 0$.
        \begin{enumerate}
            \item If $j_{1,l} > j_{2,l}$, then $\alpha \geq \delta_1''(v_{i_l,j_{1,l}}) - \delta_2''(v_{i_l,j_{2,l}}) + h_{i_l,j_{2,l}} > h_{i_l,j_{2,l}} - \delta_2''(v_{i_l,j_{2,l}}) \geq 0$.
            \item If $j_{1,l} = j_{2,l}$ and $\delta_1''(v_{i_l,j_{1,l}}) \geq \delta_2''(v_{i_l,j_{2,l}})$, then $\alpha = \delta_1''(v_{i_l,j_{1,l}}) - \delta_2''(v_{i_l,j_{2,l}}) \geq 0$.
        \end{enumerate}
        \par $\Longrightarrow$ Suppose that $j_{1,l} < j_{2,l}$, or $j_{1,l} = j_{2,l}$ and $\delta_1''(v_{i_l,j_{1,l}}) < \delta_2''(v_{i_l,j_{2,l}})$. Then
        \[\alpha := \bigg(\delta_1''(v_{i_l,j_{1,l}}) + \sum_{k=0}^{j_{1,l}-1}h_{i_l,k}\bigg) - \biggl(\delta_2''(v_{i_l,j_{2,l}}) + \sum_{k=0}^{j_{2,l}-1}h_{i_l,k}\biggr) = \delta_1''(v_{i_l,j_{1,l}}) - \delta_2''(v_{i_l,j_{2,l}}) - \sum_{k = j_{1,l}}^{j_{2,l}-1} h_{i_l,k}.\]
        To prove our statement, it is equivalent to show that $\alpha < 0$.
        \begin{enumerate}
            \item If $j_{1,l} = j_{2,l}$ and $\delta_1''(v_{i_l,j_{1,l}}) < \delta_2''(v_{i_l,j_{2,l}})$, then $\alpha = \delta_1''(v_{i_l,j_{1,l}}) - \delta_2''(v_{i_l,j_{2,l}}) < 0$. 
            \item Assume that $j_{1,l} < j_{2,l}$. Since $v_{i_l,j_{1,l}} \in V_1''$ and $V_1''$ is a minimal edge-weighted $r$-path vertex cover, $\delta_1''(v_{i_l},j_{1,l}) \leq h_{i_l,j_{1,l}}$. Hence 
            \[\alpha = \delta_1''(v_{i_l,j_{1,l}}) - \delta_2''(v_{i_l,j_{2,l}}) - \sum_{k = j_{1,l}}^{j_{2,l}-1} h_{i_l,k} < \delta_1''(v_{i_l,j_{1,l}}) - h_{i_l,j_{1,l}} \leq 0. \qedhere\]
        \end{enumerate}
\end{proof}

The following example illustrates the previous theorem.

\begin{example} \label{pMinWRPVCByCriterionAndComputePMinExample}
    We have the following examples.
    \begin{enumerate}
        \item \label{pMinWRPVCByCriterionAndComputePMinExamplea}
            Consider the following two minimal edge-weighted 3-path vertex covers $(V_1'',\delta_1'') := \{v_{1,1}^5,v_2^3\}$ and $(V_2'',\delta_2'') := \{v_{1,2}^5,v_2^3\}$ of $(\Sigma_2P_1)_\lambda$ as in Example~\ref{pMinimalExample}. 
            \[
                \begin{tikzcd}[every matrix/.append style={name=m},
                  execute at end picture={
                      \draw [<-] (m-1-3) ellipse (0.3cm and 0.25cm);
                      \draw [<-] (m-2-1) ellipse (0.3cm and 0.25cm);
                  }]
                    v_1 \ar[d,dash,"2"'] \ar[r,dash,"2"] & v_{1,1} \ar[r,dash,"5"] & v_{1,2}^5 \\
                    v_2^3 \ar[r,dash,"3"] & v_{2,1} \ar[r,dash,"4"] & v_{2,2} 
                \end{tikzcd} \ \ \ \ \ \ \ \ \ \ \ \ 
                \begin{tikzcd}[every matrix/.append style={name=m},
                  execute at end picture={
                      \draw [<-] (m-1-3) ellipse (0.3cm and 0.25cm);
                      \draw [<-] (m-2-2) ellipse (0.3cm and 0.25cm);
                  }]
                    v_1 \ar[d,dash,"2"'] \ar[r,dash,"2"] & v_{1,1} \ar[r,dash,"5"] & v_{1,2}^5 \\
                    v_2 \ar[r,dash,"3"] & v_{2,1}^3 \ar[r,dash,"4"] & v_{2,2} 
                \end{tikzcd}
            \]
            Then $q(V_1'') = \{v_{i_1} := v_1,v_{i_2} := v_2\} = q(V_2'')$. Note that
            \[j_{1,1} = \min\{j \mid v_{i_1,j} \in V_1''\} = \min\{j \mid v_{1,j} \in V_1''\} = 2,\]
            \[j_{1,2} = \min\{j \mid v_{i_2,j} \in V_1''\} = \min\{j \mid v_{2,j} \in V_1''\} = 0,\]
            \[j_{2,1} = \min\{j \mid v_{i_1,j} \in V_2''\} = \min\{j \mid v_{2,j} \in V_2''\} = 2,\]
            \[j_{2,2} = \min\{j \mid v_{i_2,j} \in V_2''\} = \min\{j \mid v_{2,j} \in V_2''\} = 1.\]
            Since $j_{1,1} = 2 = j_{2,1}$ and $\delta_1''(v_{1,j_{1,1}}) = \delta_1''(v_{1,2}) = 5 = \delta_2''(v_{1,2}) = \delta_2''(v_{1,j_{2,1}})$, and $j_{1,2} = 0 < 1 = j_{2,2}$, we have that $(V_1'',\delta_1'') <_{\mathcal p} (V_2'',\delta_2'')$ by Theorem~\ref{criterionForPMinimalityThm}. 
        \item \label{pMinWRPVCByCriterionAndComputePMinExampleb}
            Consider all the minimal edge-weighted 3-path vertex covers of $(\Sigma_2P_1)_{\lambda'}$ as in Example~\ref{exampleOfDecomOfWRPRIdeal}. Apply Theorem~\ref{criterionForPMinimalityThm} repeatedly, we get all the $\mathcal p$-minimal edge-weighted 3-path vertex covers in the following.
            \[
                \begin{tikzcd}[every matrix/.append style={name=m},
                  execute at end picture={
                      \draw [<-] (m-1-1) ellipse (0.3cm and 0.25cm);
                  }]
                    v_1^2 \ar[d,dash,"2"'] \ar[r,dash,"2"] & v_{1,1} \ar[r,dash,"5"] & v_{1,2} \\
                    v_2 \ar[r,dash,"3"] & v_{2,1} \ar[r,dash,"4"] & v_{2,2}
                \end{tikzcd} \ \ \ \ \ \ \ \ \ \ \ \ 
                \begin{tikzcd}[every matrix/.append style={name=m},
                  execute at end picture={
                      \draw [<-] (m-2-1) ellipse (0.3cm and 0.25cm);
                  }]
                    v_1 \ar[d,dash,"2"'] \ar[r,dash,"2"] & v_{1,1} \ar[r,dash,"5"] & v_{1,2} \\
                    v_2^2 \ar[r,dash,"3"] & v_{2,1} \ar[r,dash,"4"] & v_{2,2}
                \end{tikzcd}
            \]
            \[
                \begin{tikzcd}[every matrix/.append style={name=m},
                  execute at end picture={
                      \draw [<-] (m-1-3) ellipse (0.3cm and 0.25cm);
                      \draw [<-] (m-2-2) ellipse (0.3cm and 0.25cm);
                  }]
                    v_1 \ar[d,dash,"2"'] \ar[r,dash,"2"] & v_{1,1} \ar[r,dash,"5"] & v_{1,2}^5 \\
                    v_2 \ar[r,dash,"3"] & v_{2,1}^3 \ar[r,dash,"4"] & v_{2,2}
                \end{tikzcd} \ \ \ \ \ \ \ \ \ \ \ \ 
                \begin{tikzcd}[every matrix/.append style={name=m},
                  execute at end picture={
                      \draw [<-] (m-1-2) ellipse (0.3cm and 0.25cm);
                      \draw [<-] (m-2-3) ellipse (0.3cm and 0.25cm);
                  }]
                    v_1 \ar[d,dash,"2"'] \ar[r,dash,"2"] & v_{1,1}^2 \ar[r,dash,"5"] & v_{1,2} \\
                    v_2^2 \ar[r,dash,"3"] & v_{2,1} \ar[r,dash,"4"] & v_{2,2}^4
                \end{tikzcd}
            \]
    \end{enumerate}
\end{example}

The next two results are key for our third and final decomposition result.

\begin{proposition} \label{existenceOfMMinWRPathVCOfAnyMProp}
    For every minimal edge-weighted $r$-path vertex cover $\ffp := (W',\delta')$ of $(\Sigma_{r-1}G)_\lambda$, there is a $\mathcal p$-minimal edge-weighted $r$-path vertex cover $(W'',\delta'')$ of $(\Sigma_{r-1}G)_\lambda$ such that $(W'',\delta'') \leq_{\mathcal p} (W',\delta')$.
\end{proposition}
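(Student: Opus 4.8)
The plan is to fix the minimal edge-weighted $r$-path vertex cover $\ffp = (W',\delta')$ and study the collection
\[
    \mathcal{S} = \bigl\{(W'',\delta'') \mathrel{\big|} (W'',\delta'')\text{ min. w. $r$-path v. cover of }(\Sigma_{r-1}G)_\lambda \text{ with } (W'',\delta'') \leq_{\mathcal p} (W',\delta')\bigr\}.
\]
This set is nonempty because $\leq_{\mathcal p}$ is reflexive, so $(W',\delta') \in \mathcal{S}$. The strategy is to show $\mathcal{S}$ is finite, extract a $\leq_{\mathcal p}$-minimal element $(W'',\delta'')$ of $\mathcal{S}$, and then upgrade this to $\mathcal p$-minimality among \emph{all} minimal covers via transitivity of $\leq_{\mathcal p}$.

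The crux is the finiteness of $\mathcal{S}$. By Lemma~\ref{carEqualityOfMinAndPMinLemmaW}, every $(W'',\delta'') \in \mathcal{S}$ satisfies $q(W'') = q(W')$ and $\abs{(W'',\delta'')} = \abs{(W',\delta')}$; moreover, as observed in the proof of that lemma, the elements of a minimal cover have distinct first indices. Hence each $(W'',\delta'') \in \mathcal{S}$ is determined by choosing, for every $v_i \in q(W')$, a single vertex $v_{i,j} \in W''$ together with its weight $\delta''(v_{i,j})$. The level $j$ ranges over the finite set $\{0,\dots,r-1\}$, so it remains to bound the weights. Here I would argue that if $\delta''(v_{i,j}) > h_{i,j}$, then for every $r$-path $P_r$ through $v_{i,j}$ the relevant exponent $e_{i,j}$ satisfies $e_{i,j} \leq h_{i,j} < \delta''(v_{i,j})$, so $v_{i,j}$ weight-covers no $r$-path; deleting it from $W''$ then preserves the covering property and produces a strictly smaller cover, contradicting minimality of $(W'',\delta'')$. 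Thus $1 \leq \delta''(v_{i,j}) \leq h_{i,j}$ at each covering vertex, leaving only finitely many admissible weights per coordinate, and $\mathcal{S}$ is finite.

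Since $\mathcal{S}$ is a nonempty finite set ordered by $\leq_{\mathcal p}$, it has a $\leq_{\mathcal p}$-minimal element $(W'',\delta'')$. To see that $(W'',\delta'')$ is $\mathcal p$-minimal among all minimal covers, suppose there is a minimal edge-weighted $r$-path vertex cover $(U,\epsilon)$ with $(U,\epsilon) <_{\mathcal p} (W'',\delta'')$. Because $\leq_{\mathcal p}$ is transitive and $(W'',\delta'') \leq_{\mathcal p} (W',\delta')$, we get $(U,\epsilon) <_{\mathcal p} (W',\delta')$, whence $(U,\epsilon) \in \mathcal{S}$; this contradicts the $\leq_{\mathcal p}$-minimality of $(W'',\delta'')$ in $\mathcal{S}$. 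Therefore $(W'',\delta'')$ is $\mathcal p$-minimal and $(W'',\delta'') \leq_{\mathcal p} (W',\delta')$, as desired.

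I expect the main obstacle to be the finiteness step, specifically pinning down that the weights of a minimal cover are bounded by the local maximal edge weights $h_{i,j}$; once this bound is combined with the bounded whisker length $r-1$ and with Lemma~\ref{carEqualityOfMinAndPMinLemmaW}, the existence of a $\leq_{\mathcal p}$-minimal element and the transitivity upgrade are routine.
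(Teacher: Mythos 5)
Your proof is correct, but it takes a genuinely different route from the paper's. The paper argues constructively: starting from $(W',\delta')$, it repeatedly increases $\gamma_{(W',\delta')}(v_i)$ for each $v_i \in q(W')$ --- by pushing the covering vertex further out along the $i$-th whisker and reassigning its weight --- until no further increase preserves the covering property, and notes that this terminates because whisker levels are bounded by $r$; the resulting cover is declared $\mathcal p$-minimal by construction. You instead give a pure existence argument: the set $\mathcal S$ of minimal covers lying $\leq_{\mathcal p}$-below $(W',\delta')$ is nonempty by reflexivity and finite, hence admits a $\leq_{\mathcal p}$-minimal element, which transitivity promotes to $\mathcal p$-minimality among all minimal covers. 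Your finiteness step rests on the bound $\delta''(v_{i,j}) \leq h_{i,j}$ for every vertex of a minimal cover; this is legitimate, and indeed the paper asserts and uses exactly this bound in the proof of Theorem~\ref{criterionForPMinimalityThm}. (A slightly cleaner justification than deletion is to note that if $\delta''(v_{i,j}) > h_{i,j}$ then increasing $\delta''(v_{i,j})$ yields a strictly smaller edge-weighted cover, contradicting minimality; the deletion argument also works but requires checking that the underlying unweighted covering condition survives, since the weight conditions are witnessed elsewhere.) What your approach buys is rigor: the paper's ``assigning an appropriate value'' and ``this process terminates'' are left vague, whereas the finite-preorder argument is airtight. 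What it gives up is the algorithmic content of the paper's construction, which the example following the proposition is meant to illustrate.
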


\begin{proof}
    If $(W',\delta')$ is itself a $\mathcal p$-minimal edge-weighted $r$-path vertex cover for $(\Sigma_{r-1}G)_\lambda$, then we are done. If $(W',\delta')$ is not $\mathcal p$-minimal, then by Lemma~\ref{carEqualityOfMinAndPMinLemmaW}, for some $v_i \in q(W')$ the function $\gamma_{(W',\delta')}(v_i) = \delta'(v_{i,j_0}) + \sum_{k=0}^{j_0-1}h_{i,k}$ with $j_0 := \{j \mid v_{i,j} \in W_i(\ffp)\}$ from Proposition~\ref{equivaDefForGammaVDeltaProp} can be increased, which is done by increasing $j_0$ and assigning an appropriate value to $\delta'(v_{i,j_0})$ since $(W',\delta')$ is minimal. We increase $\gamma_{(W',\delta')}(v_i)$ for each $v_i \in q(W')$ such that any further increase would cause the set not to be an edge-weighted $r$-path vertex cover. This process terminates in finitely many steps because $j_0 \leq r$. Denote the new set $(W'',\delta'')$. Then $(W'',\delta'')$ is minimal since the size of $W''$ cannot be decreased by Lemma~\ref{carEqualityOfMinAndPMinLemmaW} and $\delta''$ cannot be increased. Thus, by construction, $(W'',\delta'')$ is a $\mathcal p$-minimal edge-weighted $r$-path vertex cover for $(\Sigma_{r-1}G)_\lambda$ such that $(W'',\delta'') \leq_{\mathcal p} (W',\delta')$.
\end{proof}

The following example illustrates the previous proposition.

\begin{example}
    Consider the following minimal edge-weighted 3-path vertex cover $\ffp := (V_1'',\delta_1'') := \{v_{1,1}^5,v_2^3\}$ of $(\Sigma_2P_1)_\lambda$ as in Example~\ref{pMinWRPVCByCriterionAndComputePMinExample}\ref{pMinWRPVCByCriterionAndComputePMinExamplea}. 
    \[
        \begin{tikzcd}[every matrix/.append style={name=m},
          execute at end picture={
              \draw [<-] (m-1-3) ellipse (0.3cm and 0.25cm);
              \draw [<-] (m-2-1) ellipse (0.3cm and 0.25cm);
          }]
            v_1 \ar[d,dash,"2"'] \ar[r,dash,"2"] & v_{1,1} \ar[r,dash,"5"] & v_{1,2}^5 \\
            v_2^3 \ar[r,dash,"3"] & v_{2,1} \ar[r,dash,"4"] & v_{2,2} 
        \end{tikzcd}
    \]
    Note that $\gamma_{(V_1'',\delta_1'')}(v_1)$ cannot be increased. Assume that $v_{2,1} \in V''$. Then set $\delta''(v_{2,1}) = 3$, we have that $\ffp' := (V_1''',\delta_1''') = \{v_{1,2}^5,v_{2,1}^3\}$ is a minimal edge-weighted 3-path vertex cover by Example~\ref{pMinWRPVCByCriterionAndComputePMinExample}\ref{pMinWRPVCByCriterionAndComputePMinExamplea}. However, since $v_{1,2} \in V''$, we have that $v_{2,2}$ cannot be in $V'''$, otherwise the 3-path $v_{1,1}v_1v_2v_{2,1}$ will be left uncovered. Thus, $(V_1''',\delta_1''')$ is $\mathcal p$-minimal and $(V_1''',\delta_1''') <_{\mathcal p} (V_1'',\delta_1'')$.
\end{example}

\begin{lemma} \label{MMinWRPathVCLemma}
    Let $(V_1',\delta_1'),(V_2',\delta_2')$ be two minimal edge-weighted $r$-path vertex covers of $(\Sigma_{r-1}G)_\lambda$. Then $(V_1',\delta_1') \leq_{\mathcal p} (V_2',\delta_2')$ if and only if $P(q(V_1'),\gamma_{(V_1',\delta_1')}) \subseteq P(q(V_2'),\gamma_{(V_2',\delta_2')})$.
\end{lemma}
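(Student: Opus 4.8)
The plan is to reduce the statement to a completely elementary criterion for containment of monomial ideals generated by pure powers of distinct variables, and then match that criterion term by term against the definition of $\leq_{\mathcal p}$. The first thing I would record is that the two ideals in question have a very rigid shape. Because $V_1'$ and $V_2'$ are \emph{minimal} $r$-path vertex covers, the map $q$ is injective on each of them: distinct elements $v_{i_1,j_1},v_{i_2,j_2}$ of a minimal cover satisfy $i_1 \neq i_2$, exactly as observed in the proof of Lemma~\ref{carEqualityOfMinAndPMinLemmaW}. Hence each $P(q(V_\ell'),\gamma_{(V_\ell',\delta_\ell')})$ is generated by the pure powers $X_i^{\gamma_{(V_\ell',\delta_\ell')}(v_i)}$ as $v_i$ ranges over $q(V_\ell')$, with exactly one generator attached to each variable $X_i$.

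The substantive preliminary step, and the one I expect to be the main obstacle, is to show that on a minimal cover the function $\gamma$ never takes the value $\infty$ on $q(V')$; equivalently, $W_i(\ffP) \neq \emptyset$ for every $v_i \in q(V')$, where $\ffP := (V',\delta')$. The key observation is that a vertex $v_{i,j} \in V'$ with $\delta'(v_{i,j}) > h_{i,j}$ covers no $r$-path at all: on any $r$-path through $v_{i,j}$ the relevant weight $e$ at that vertex is a maximum of weights of edges incident to $v_{i,j}$, hence $e \leq h_{i,j} < \delta'(v_{i,j})$, so none of the covering inequalities of Definition~\ref{defOffWRPathVC} can hold at $v_{i,j}$. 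If $W_i(\ffP)$ were empty for some $v_i \in q(V')$, then every $v_{i,j} \in V'$ would be redundant in this sense, and deleting all of them would produce a strictly smaller edge-weighted $r$-path vertex cover, contradicting minimality. Consequently the generating set of $P(q(V_\ell'),\gamma_{(V_\ell',\delta_\ell')})$ is indexed precisely by $q(V_\ell')$ and carries only finite exponents.

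With finiteness established, both implications follow from the single fact that, for ideals $A = (X_i^{a_i} \mid i \in S)R$ and $B = (X_j^{b_j} \mid j \in T)R$ generated by pure powers of distinct variables, one has $A \subseteq B$ if and only if $S \subseteq T$ and $b_i \leq a_i$ for all $i \in S$; this holds because a pure power $X_i^{a_i}$ lies in $B$ exactly when some generator $X_j^{b_j}$ divides it, which forces $j = i$ and $b_i \leq a_i$. Applying this with $S = q(V_1')$, $T = q(V_2')$, $a_i = \gamma_{(V_1',\delta_1')}(v_i)$, and $b_i = \gamma_{(V_2',\delta_2')}(v_i)$, the containment $P(q(V_1'),\gamma_{(V_1',\delta_1')}) \subseteq P(q(V_2'),\gamma_{(V_2',\delta_2')})$ is equivalent to the pair of conditions $q(V_1') \subseteq q(V_2')$ and $\gamma_{(V_2',\delta_2')}(v_i) \leq \gamma_{(V_1',\delta_1')}(v_i)$ for all $v_i \in q(V_1')$, which is verbatim the definition of $(V_1',\delta_1') \leq_{\mathcal p} (V_2',\delta_2')$. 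The forward direction uses the ``if'' part of the criterion and needs no minimality, whereas the converse uses the ``only if'' part and genuinely relies on the finiteness step: were some generator $X_i^{\gamma(v_i)}$ to degenerate (when $\gamma(v_i) = \infty$), the containment could no longer detect whether $v_i \in q(V_2')$, and the equivalence with $\leq_{\mathcal p}$ would break.
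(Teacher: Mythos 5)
Your proof is correct and follows the same route as the paper's, whose entire argument is the chain of equivalences $(V_1',\delta_1') \leq_{\mathcal p} (V_2',\delta_2')$ $\iff$ $q(V_1') \subseteq q(V_2')$ and $\gamma_{(V_1',\delta_1')}|_{q(V_1')} \geq \gamma_{(V_2',\delta_2')}|_{q(V_1')}$ $\iff$ the ideal containment. You additionally justify the second equivalence via the pure-power containment criterion and the observation that $\gamma$ is finite on minimal covers, details the paper leaves implicit; the latter point is a genuine (and correctly handled) prerequisite for the converse direction.
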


\begin{proof}
    $(V_1',\delta_1') \leq_{\mathcal p} (V_2',\delta_2')$ if and only if $q(V_1') \subseteq q(V_2')$ and $\gamma_{(V_1',\delta_1')}|_{q(V_1')} \geq \gamma_{(V_2',\delta_2')}|_{q(V_1')}$ if and only if $P(q(V_1'),\gamma_{(V_1',\delta_1')}) \subseteq P(q(V_2'),\gamma_{(V_2',\delta_2')})$.
\end{proof}

The following example illustrates the previous lemma.

\begin{example}
    Consider the following two minimal edge-weighted 3-path vertex covers $(V_1'',\delta_1'') := \{v_{1,1}^5,v_2^3\}$ and $(V_2'',\delta_2'') := \{v_{1,2}^5,v_2^3\}$ of $(\Sigma_2P_1)_\lambda$ as in Example~\ref{pMinWRPVCByCriterionAndComputePMinExample}\ref{pMinWRPVCByCriterionAndComputePMinExamplea}. 
    \[
        \begin{tikzcd}[every matrix/.append style={name=m},
          execute at end picture={
              \draw [<-] (m-1-3) ellipse (0.3cm and 0.25cm);
              \draw [<-] (m-2-1) ellipse (0.3cm and 0.25cm);
          }]
            v_1 \ar[d,dash,"2"'] \ar[r,dash,"2"] & v_{1,1} \ar[r,dash,"5"] & v_{1,2}^5 \\
            v_2^3 \ar[r,dash,"3"] & v_{2,1} \ar[r,dash,"4"] & v_{2,2} 
        \end{tikzcd} \ \ \ \ \ \ \ \ \ \ \ \ 
        \begin{tikzcd}[every matrix/.append style={name=m},
          execute at end picture={
              \draw [<-] (m-1-3) ellipse (0.3cm and 0.25cm);
              \draw [<-] (m-2-2) ellipse (0.3cm and 0.25cm);
          }]
            v_1 \ar[d,dash,"2"'] \ar[r,dash,"2"] & v_{1,1} \ar[r,dash,"5"] & v_{1,2}^5 \\
            v_2 \ar[r,dash,"3"] & v_{2,1}^3 \ar[r,dash,"4"] & v_{2,2} 
        \end{tikzcd}
    \]
    Then $(V_2'',\delta_2'') <_{\mathcal p} (V_1'',\delta_1'')$ by Example~\ref{pMinWRPVCByCriterionAndComputePMinExample}\ref{pMinWRPVCByCriterionAndComputePMinExamplea}. Also note that 
    \[P(q(V_2''),\gamma_{(V_2'',\delta_2'')}) = (X_1^{12},X_2^6)R \subseteq (X_1^{12},X_2^3)R = P(q(V_1''),\gamma_{(V_1'',\delta_1'')}) .\]
\end{example}

Next, we present our third and final decomposition result which will yield the type computation in Theorem~\ref{computeTypeOfWRPathOfWRPathSuspension}.

\begin{theorem} \label{irredundantParaDecomOfWRPAthThm}
    Let $(\Sigma_rG)_\lambda$ be an edge-weighted $r$-path suspension of $G_\omega$ such that $\lambda(v_iv_j) \leq \lambda(v_i,v_{i,1})$ and $\lambda(v_iv_j) \leq \lambda(v_j,v_{j,1})$ for all edges $v_iv_j \in E$. One has an irredundant irreducible decomposition 
    \[I_r((\Sigma_rG)_\lambda)R = \bigcap_{(V'',\delta'') \text{ $\mathcal p$-min. w. $r$-path v. c. of }(\Sigma_{r-1}G)_{\lambda'}} P(q(V''),\gamma_{(V'',\delta'')}) + \ffm^{[\underline a(\lambda)]},\ \lambda' = \lambda|_{\Sigma_{r-1}G}.\]
\end{theorem}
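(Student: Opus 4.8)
The plan is to establish the stated decomposition as an irredundant irreducible decomposition in two conceptually separate stages: first showing that the intersection over $\mathcal p$-minimal covers equals the intersection over all minimal covers (and hence equals $I_r((\Sigma_{r-1}G)_{\lambda'})R$), and then showing that this reduced intersection is irredundant.

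First I would address the set-theoretic equality of the intersections. By Corollary~\ref{minDecomOfReductiveWRPathIdealRMinus1}, we already know that
\[
I_r((\Sigma_{r-1}G)_{\lambda'})R = \bigcap_{(V'',\delta'') \text{ min. w. $r$-path v. cover of }(\Sigma_{r-1}G)_{\lambda'}} P(q(V''),\gamma_{(V'',\delta'')}).
\]
The containment ``$\subseteq$'' in passing to the sub-intersection over $\mathcal p$-minimal covers is automatic, since every $\mathcal p$-minimal cover is in particular a minimal cover. For ``$\supseteq$'' I would invoke Proposition~\ref{existenceOfMMinWRPathVCOfAnyMProp}: given any minimal cover $(V'',\delta'')$, there is a $\mathcal p$-minimal cover $(W'',\delta'')$ with $(W'',\delta'') \leq_{\mathcal p} (V'',\delta'')$, and then Lemma~\ref{MMinWRPathVCLemma} gives $P(q(W''),\gamma_{(W'',\delta'')}) \subseteq P(q(V''),\gamma_{(V'',\delta'')})$. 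Hence every irreducible component appearing in the full intersection already contains a component indexed by a $\mathcal p$-minimal cover, so dropping the non-$\mathcal p$-minimal terms does not enlarge the intersection. Combining with Example~\ref{IrSrEIrSrM1PMExample}, which writes $I_r((\Sigma_rG)_\lambda)R = I_r((\Sigma_{r-1}G)_{\lambda'})R + \ffm^{[\underline a(\lambda)]}$, and the behavior of irreducible decompositions under adding $\ffm^{[\underline a(\lambda)]}$ (as in \cite[Theorem 7.5.3]{MR3839602}), yields the claimed equality of ideals.

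Next I would prove irredundancy, which is the heart of the statement. One must show that for two distinct $\mathcal p$-minimal covers $(V_1'',\delta_1'')$ and $(V_2'',\delta_2'')$, the component $P(q(V_1''),\gamma_{(V_1'',\delta_1'')})$ is not contained in $P(q(V_2''),\gamma_{(V_2'',\delta_2'')})$; equivalently, no component can be deleted without changing the intersection. By Lemma~\ref{MMinWRPathVCLemma}, such a containment would be exactly the relation $(V_1'',\delta_1'') \leq_{\mathcal p} (V_2'',\delta_2'')$. So suppose two distinct $\mathcal p$-minimal covers satisfy $(V_1'',\delta_1'') \leq_{\mathcal p} (V_2'',\delta_2'')$; I would derive a contradiction with $\mathcal p$-minimality of $(V_2'',\delta_2'')$. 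Concretely, $(V_1'',\delta_1'') \leq_{\mathcal p} (V_2'',\delta_2'')$ means $(V_2'',\delta_2'')$ is not $\mathcal p$-minimal unless $(V_1'',\delta_1'') = (V_2'',\delta_2'')$; since both are $\mathcal p$-minimal and comparable, the antisymmetry I would extract from Theorem~\ref{criterionForPMinimalityThm} forces $q(V_1'') = q(V_2'')$ and, for each index, $j_{1,l} = j_{2,l}$ with $\delta_1'' = \delta_2''$, i.e. the two covers coincide. This contradicts their distinctness, so no redundant containment exists among the $\mathcal p$-minimal components. Finally, I would check that adding $\ffm^{[\underline a(\lambda)]}$ preserves irredundancy (again using the framework of \cite[Theorem 7.5.3]{MR3839602}), noting that the components $X_i^{a_i}$ from $\ffm^{[\underline a(\lambda)]}$ contribute to distinct irreducible components in the combined decomposition without being absorbed.

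The main obstacle I anticipate is the irredundancy argument, specifically verifying that $\leq_{\mathcal p}$ is genuinely a partial order on $\mathcal p$-minimal covers (antisymmetry), so that comparability of two $\mathcal p$-minimal elements forces equality. This rests on extracting from the criterion in Theorem~\ref{criterionForPMinimalityThm} that $(V_1'',\delta_1'') \leq_{\mathcal p} (V_2'',\delta_2'')$ and $(V_2'',\delta_2'') \leq_{\mathcal p} (V_1'',\delta_1'')$ together imply $j_{1,l}=j_{2,l}$ and $\delta_1''(v_{i_l,j_{1,l}})=\delta_2''(v_{i_l,j_{2,l}})$ for all $l$, which then identifies the two covers as the same decorated set. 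A secondary subtlety is the interaction between the intersection of the $P(q(V''),\gamma_{(V'',\delta'')})$ and the added monomial ideal $\ffm^{[\underline a(\lambda)]}$: I must confirm that none of the pure-power components $X_i^{a_i}$ is redundant relative to the $\mathcal p$-minimal components and vice versa, which is where the precise form of \cite[Theorem 7.5.3]{MR3839602} governing irreducible decompositions of sums with $\ffm$-primary pieces becomes essential.
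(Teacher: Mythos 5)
Your proposal follows essentially the same route as the paper's proof: reduce to the $(r-1)$-suspension via Example~\ref{IrSrEIrSrM1PMExample} and \cite[Theorem 7.5.3]{MR3839602}, obtain the equality of the two intersections from Corollary~\ref{minDecomOfReductiveWRPathIdealRMinus1}, Proposition~\ref{existenceOfMMinWRPathVCOfAnyMProp} and Lemma~\ref{MMinWRPathVCLemma}, and deduce irredundancy from Lemma~\ref{MMinWRPathVCLemma} together with $\mathcal p$-minimality. Your explicit antisymmetry check via Theorem~\ref{criterionForPMinimalityThm}, which rules out two distinct $\mathcal p$-minimal covers yielding the same irreducible component, is a detail the paper leaves implicit in its one-line appeal to Lemma~\ref{MMinWRPathVCLemma}, but it is the same argument.
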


\begin{proof}
    By Example~\ref{IrSrEIrSrM1PMExample} and \cite[Theorem 7.5.3]{MR3839602}, to verify this result, it is enough to show that we have an irredundant decomposition
    \[I_r((\Sigma_{r-1}G)_{\lambda'})R = \bigcap_{(V'',\delta'') \text{ $\mathcal p$-min. w. $r$-path v. cover of }(\Sigma_{r-1}G)_{\lambda'}} P(q(V''),\gamma_{(V'',\delta'')}).\] 
    Lemma~\ref{MMinWRPathVCLemma} shows that this intersection is irredundant. Hence by Corollary~\ref{minDecomOfReductiveWRPathIdealRMinus1}, it is enough to show that
    \begin{align*}
        &\ \ \ \ \bigcap_{(V'',\delta'') \text{ min. edge-weighted $r$-path v. cover of }(\Sigma_{r-1}G)_{\lambda'}} P(q(V''),\gamma_{(V'',\delta'')}) \\
        &= \bigcap_{(V'',\delta'') \text{ $\mathcal p$-min. edge-weighted $r$-path v. cover of }(\Sigma_{r-1}G)_{\lambda'}} P(q(V''),\gamma_{(V'',\delta'')}).
    \end{align*}
    \par ``$\subseteq$'' follows as every $\mathcal p$-minimal edge-weighted $r$-path vertex cover is a minimal edge-weighted $r$-path vertex cover. \par
    ``$\supseteq$'' follows from Proposition~\ref{existenceOfMMinWRPathVCOfAnyMProp} and Lemma~\ref{MMinWRPathVCLemma}.
\end{proof}

The following example illustrates the previous theorem.

\begin{example} \label{exampleOfDecomOfWPMRPRIdeal}
    Consider the graph $(\Sigma_3 P_1)_\lambda$ as in Example~\ref{exampleOfDecomOfWRPRIdeal}. 
    Then by Theorem~\ref{irredundantParaDecomOfWRPAthThm} and Example~\ref{pMinWRPVCByCriterionAndComputePMinExample}\ref{pMinWRPVCByCriterionAndComputePMinExampleb}, we have an irredundant irreducible decomposition
    \begin{align*}
        I_3((\Sigma_3P_1)_\lambda) &= (X_1^{12}X_2^2,X_1^4X_2^6,X_1^2X_2^{11})R + \ffm^{[\underline a(\lambda)]} \\
        &= \big[(X_1^2)R \cap (X_2^2)R \cap (X_1^{12},X_2^6)R \cap (X_1^4,X_2^{11})R ] + (X_1^{14},X_2^{13})R.
    \end{align*}
\end{example}

Here we provide two important facts which will be used in proving our main theorem.

\begin{fact} \label{moddingOutByRegularSequenceReplacingYWithXWRPathCase}
    Let $(\Sigma_rG)_\lambda$ be an edge-weighted $r$-path suspension of $G_\omega$ such that $\lambda(v_iv_j) \leq \lambda(v_i,v_{i,1})$ and $\lambda(v_iv_j) \leq \lambda(v_j,v_{j,1})$ for all edges $v_iv_j \in E$. Then $I_r((\Sigma_r G)_\lambda)$ is the polarization of $I_r((\Sigma_rG)_\lambda)R$ by e.g., \cite[Proposition 3.7]{MR3335988}. Hence by \cite{MR681256}, the list $X_i-X_{i,k},1 \leq i \leq d,1 \leq k \leq r$ is a maximal homogeneous regular sequence for $\frac{R'}{I_r((\Sigma_r G)_\lambda)}$ and
    \[\frac{R}{I_r((\Sigma_rG)_\lambda)R} \cong \frac{R'}{I_r((\Sigma_rG)_\lambda)+(X_i-X_{i,k} \mid 1 \leq i \leq d,1 \leq k \leq r)R'}.\]
\end{fact}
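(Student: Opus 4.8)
The plan is to combine two inputs: the identification of $I_r((\Sigma_rG)_\lambda)$ as a polarization, supplied by \cite[Proposition 3.7]{MR3335988}, and the standard homological behavior of polarizations, supplied by \cite{MR681256}. The only genuinely new work is to make the depolarization map explicit and to check that the resulting regular sequence is \emph{maximal}.

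First I would record the depolarization as reduction modulo $J := (X_i - X_{i,k} \mid 1 \leq i \leq d,\ 1 \leq k \leq r)R'$. Since $X_{i,0} = X_i$, the quotient map $R' \twoheadrightarrow R'/J$ identifies all variables $X_{i,0}, X_{i,1}, \dots, X_{i,r}$ in a single block with $X_i$; that is, $R'/J \cong R$ via precisely the collapse map $p$ of Definition~\ref{totallyProjectionMapP}. Under this isomorphism the image of $I_r((\Sigma_rG)_\lambda)$ is $I_r((\Sigma_rG)_\lambda)R$, so
\[
\frac{R'}{I_r((\Sigma_rG)_\lambda) + J} \cong \frac{R}{I_r((\Sigma_rG)_\lambda)R},
\]
which is exactly the asserted isomorphism, contingent on $J$ being generated by a regular sequence.

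Next I would invoke the polarization. By \cite[Proposition 3.7]{MR3335988} and the weight hypotheses $\lambda(v_iv_j) \leq \lambda(v_iv_{i,1})$, $\lambda(v_iv_j) \leq \lambda(v_jv_{j,1})$, the generators of $I_r((\Sigma_rG)_\lambda)$ are the polarizations of those of $I_r((\Sigma_rG)_\lambda)R$, the block of duplicate variables for $X_i$ being $X_{i,1}, \dots, X_{i,r}$ adjoined to the base variable $X_{i,0} = X_i$. The general theory of polarization \cite{MR681256} then shows that the differences of each duplicate variable with its base variable form a regular sequence on the polarized quotient whose quotient recovers the depolarized ideal $I_r((\Sigma_rG)_\lambda)R$; in the present notation this sequence is exactly $\{X_i - X_{i,k} : 1 \leq i \leq d,\ 1 \leq k \leq r\}$, of length $dr = \dim R' - \dim R$.

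Finally I would verify maximality. Because $\ffm^{[\underline a(\lambda)]} \subseteq I_r((\Sigma_rG)_\lambda)R$ by Example~\ref{IrSrEIrSrM1PMExample}, the quotient $R/I_r((\Sigma_rG)_\lambda)R$ is a homomorphic image of the Artinian ring $R/\ffm^{[\underline a(\lambda)]}$, hence itself Artinian, so $\operatorname{depth}\bigl(R/I_r((\Sigma_rG)_\lambda)R\bigr) = 0$. Modding out a homogeneous regular sequence of length $dr$ lowers depth by exactly $dr$, so no further regular element exists and the list $X_i - X_{i,k}$ is maximal, completing the argument. The main obstacle is the polarization identification itself — matching the block exponents $e_{i,k}$ coming from the whisker weights against the collapsed exponents $a_i = \sum_k e_{i,k}$, which is precisely where the two weight inequalities are needed to keep distinct generators from colliding under $p$ — but this is handled by the cited proposition, leaving the remaining steps formal.
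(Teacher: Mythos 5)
Your proposal is correct and follows the same route as the paper, which offers no argument beyond the two citations you invoke: \cite[Proposition 3.7]{MR3335988} for the polarization identification and \cite{MR681256} for the regular-sequence consequence. Your added verification of maximality via the Artinian-ness of $R/I_r((\Sigma_rG)_\lambda)R$ (using $\ffm^{[\underline a(\lambda)]} \subseteq I_r((\Sigma_rG)_\lambda)R$ from Example~\ref{IrSrEIrSrM1PMExample}) is a correct and natural way to fill in the one detail the paper leaves implicit.
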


This fact is crucial in computing the Cohen-Macaulay type of an edge-weighted $r$-path suspension.

Because of the following fact, the main result of this section gives a formula to compute the $r_R(R/I_r(G_\omega))$ for all trees such that $R/I_r(G_\omega)$ is Cohen-Macaulay.

\begin{fact} \label{SigmaGIsCMWRPathCase} \cite[Proposition 3.7 and Theorem 3.11]{MR3335988}
    Let $(\Sigma_rG)_\lambda$ be an edge-weighted $r$-path suspension of $G_\omega$ such that $\lambda(v_iv_j) \leq \min\{\lambda(v_i,v_{i,1}),\lambda(v_j,v_{j,1})\}$ for all edges $v_iv_j \in E$. 
    \begin{enumerate}
        \item \label{SigmaGIsCMWRPathCasea} $R'/I_r((\Sigma_r G)_\lambda)$ is Cohen-Macaulay.
        \item \label{SigmaGIsCMWRPathCaseb} If $\Gamma_{\lambda'}$ is an edge-weighted tree and $R/I_r(\Gamma_{\lambda'})$ is Cohen-Macaulay, then there exists an edge-weighted tree $H_{\omega'}$ such that $(\Sigma_r H)_{\lambda''}$ is obtained by pruning a sequence of $r$-pathless leaves from $\Gamma_{\lambda'}$ with $\lambda'' = \lambda'|_{\Sigma_rH}$ and the weight function $\lambda'$ satisfies the above condition, where a vertex $v$ in $T_{\lambda'}$ is called an \emph{$r$-pathless leaf} of $T_{\lambda'}$  if it not a part of any $r$-path in $T_{\lambda'}$.
    \end{enumerate}
\end{fact}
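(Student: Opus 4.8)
The plan is to treat the two parts separately, since part~\ref{SigmaGIsCMWRPathCasea} follows from the decomposition machinery already assembled, whereas part~\ref{SigmaGIsCMWRPathCaseb} is a genuinely structural statement about edge-weighted trees.

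For part~\ref{SigmaGIsCMWRPathCasea}, I would argue as follows. By Fact~\ref{moddingOutByRegularSequenceReplacingYWithXWRPathCase}, the list $X_i - X_{i,k}$ with $1 \le i \le d$ and $1 \le k \le r$ is a maximal homogeneous regular sequence on $R'/I_r((\Sigma_rG)_\lambda)$, and modding out by it yields $R/I_r((\Sigma_rG)_\lambda)R$. Because the Cohen-Macaulay property is both preserved and reflected under quotienting by a regular sequence, it suffices to prove that $R/I_r((\Sigma_rG)_\lambda)R$ is Cohen-Macaulay. Here the decomposition of Theorem~\ref{irredundantParaDecomOfWRPAthThm} does the work: it exhibits $\ffm^{[\underline a(\lambda)]} = (X_1^{a_1},\dots,X_d^{a_d})R$ as one of the two summands of $I_r((\Sigma_rG)_\lambda)R$, so $\ffm^{[\underline a(\lambda)]} \subseteq I_r((\Sigma_rG)_\lambda)R$ and $R/I_r((\Sigma_rG)_\lambda)R$ is a quotient of the Artinian ring $R/\ffm^{[\underline a(\lambda)]}$. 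An Artinian ring has Krull dimension zero and is therefore trivially Cohen-Macaulay, which completes this part.

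For part~\ref{SigmaGIsCMWRPathCaseb}, I would proceed by induction on the number of vertices of $\Gamma_{\lambda'}$, built on the observation that an $r$-pathless leaf $v$ contributes no generator to $I_r(\Gamma_{\lambda'})$: by definition $v$ lies on no $r$-path, so $I_r(\Gamma_{\lambda'})$ and $I_r((\Gamma \setminus v)_{\lambda'})$ have the same monomial generators, and passing from $\Gamma$ to $\Gamma \setminus v$ merely deletes an unused variable, which preserves the Cohen-Macaulay property. Iterating, one reduces to a tree $\Gamma^\circ$ having no $r$-pathless leaves for which $R/I_r(\Gamma^\circ_{\lambda'})$ is still Cohen-Macaulay, and the task becomes to show that such a tree must be an edge-weighted $r$-path suspension $(\Sigma_r H)_{\lambda''}$ whose weights obey $\lambda''(v_iv_j) \le \min\{\lambda''(v_iv_{i,1}),\lambda''(v_jv_{j,1})\}$. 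To recover the suspension structure I would exploit unmixedness: Cohen-Macaulayness forces all minimal edge-weighted $r$-path vertex covers to have the same cardinality, and a local analysis at each leaf of $\Gamma^\circ$ shows that every vertex off the ``core'' subtree $H$ must lie at the end of a full $r$-whisker, for otherwise one builds two minimal vertex covers of different sizes. The admissible weight inequality would then be extracted from the requirement that the generator attached to each whisker not be redundant against the generators crossing into $H$.

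The main obstacle is the structural half of part~\ref{SigmaGIsCMWRPathCaseb}: converting the algebraic hypothesis of Cohen-Macaulayness, and the unmixedness of the $r$-path vertex covers that it entails, into the rigid combinatorial conclusion that the pruned tree is precisely an $r$-path suspension with admissible weights. This is the content of \cite[Theorem 3.11]{MR3335988}, and it demands a careful case analysis of how minimal $r$-path vertex covers behave near the leaves of a tree; the bookkeeping needed to guarantee the $\min$-inequality on the weights is the most delicate point and is where I would expect most of the effort to go.
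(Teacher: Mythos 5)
The paper does not prove this statement at all: it is imported verbatim as a Fact, with the citation to \cite[Proposition 3.7 and Theorem 3.11]{MR3335988} serving as the entire justification. So there is no internal proof to compare against, and the relevant question is whether your reconstruction is itself complete.

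Your argument for part~\ref{SigmaGIsCMWRPathCasea} is correct and essentially self-contained given Fact~\ref{moddingOutByRegularSequenceReplacingYWithXWRPathCase}: once one knows that $I_r((\Sigma_rG)_\lambda)$ polarizes to $I_r((\Sigma_rG)_\lambda)R$ and that the variable differences form a regular sequence, Cohen--Macaulayness transfers both ways, and the quotient downstairs is Artinian because $\ffm^{[\underline a(\lambda)]}\subseteq I_r((\Sigma_rG)_\lambda)R$ (for this containment Example~\ref{IrSrEIrSrM1PMExample} already suffices; you do not need the full strength of Theorem~\ref{irredundantParaDecomOfWRPAthThm}). This is presumably the same mechanism as in the source, and it introduces no circularity into the paper, since the main theorem only consumes part~\ref{SigmaGIsCMWRPathCasea} and Fact~\ref{moddingOutByRegularSequenceReplacingYWithXWRPathCase}, not part~\ref{SigmaGIsCMWRPathCaseb}.

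Part~\ref{SigmaGIsCMWRPathCaseb} is where the genuine gap lies. The reduction by pruning $r$-pathless vertices is fine (such a vertex appears in no generator, so deleting it only removes a free polynomial variable, which preserves and reflects Cohen--Macaulayness and does not change the set of $r$-paths), although you should note that an $r$-pathless vertex of a tree need not be a leaf, so the claim that the pruning can be organized as a sequence of leaf deletions itself requires a small argument. But the core of the theorem --- that a tree with no $r$-pathless vertices whose weighted $r$-path ideal is Cohen--Macaulay must be exactly an $r$-path suspension $(\Sigma_rH)_{\lambda''}$ with $\lambda''(v_iv_j)\le\min\{\lambda''(v_iv_{i,1}),\lambda''(v_jv_{j,1})\}$ --- is only described, not proved: you name the tool (unmixedness of the minimal edge-weighted $r$-path vertex covers) and the shape of the case analysis, and then explicitly defer the execution to \cite[Theorem 3.11]{MR3335988}. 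As a standalone proof this is a plan rather than an argument; as a reconstruction of a cited fact it is an acceptable acknowledgment that the heavy lifting lives in the reference. Either way, the structural half of part~\ref{SigmaGIsCMWRPathCaseb} is not established by what you have written.
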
 

The following fact relates the Cohen-Macaulay type of $R/I$ to an irredundant irreducible decomposition of $I$ when $I$ is some special monomial ideal.

\begin{fact} \label{typeWhenMonomialIdealIrredundantParametricDecomp}
    Suppose that $I$ is a proper monomial ideal in $R$ such that $\dim(R/I) = 0$. Let $I = \bigcap_{i=1}^t Q_i$ be an irredundant irreducible decomposition of $I$. Then $r_R(R/I) = t$.
\end{fact}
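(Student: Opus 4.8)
The plan is to reduce the statement to the classical fact that the Cohen--Macaulay type of an Artinian quotient $R/I$ equals the minimal number of irreducible components in an irredundant irreducible decomposition of $I$. Since $\dim(R/I) = 0$, the ring $R/I$ is Artinian, and $r_R(R/I) = \dim_\bbK \operatorname{Ext}_R^0(\bbK, R/I) = \dim_\bbK \operatorname{Hom}_R(\bbK, R/I) = \dim_\bbK \operatorname{soc}(R/I)$, the dimension of the socle. So the entire problem becomes computing the $\bbK$-dimension of the socle of $R/I$ and matching it with the number $t$ of irreducible ideals $Q_i$.

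First I would recall that each irreducible monomial ideal $Q_i$ in the decomposition has the form $Q_i = (X_{j}^{c_{i,j}} \mid j \in S_i)R$ for some subset $S_i$ of the variables and positive exponents $c_{i,j}$; since $\dim(R/I) = 0$, irredundancy forces each $Q_i$ to be $\ffm$-primary, so in fact $S_i = \{1,\dots,d\}$ and $Q_i = (X_1^{c_{i,1}},\dots,X_d^{c_{i,d}})R$. The next step is to exhibit, for each component $Q_i$, a distinguished monomial $m_i = \prod_{j} X_j^{c_{i,j}-1}$ and to argue that its image in $R/I$ is a nonzero socle element. Nonvanishing follows because $m_i \notin Q_i$ (each exponent is one below the threshold of $Q_i$), hence $m_i \notin I = \bigcap_k Q_k$; that it lies in the socle follows because multiplying by any $X_j$ pushes the $j$-th exponent up to $c_{i,j}$, landing the product in $Q_i \subseteq$ some component, and one checks it actually lands in $I$. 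Irredundancy is exactly what guarantees these $t$ socle elements are $\bbK$-linearly independent: distinct components give genuinely distinct corner monomials, and no one component can be dropped.

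The main obstacle I anticipate is proving the reverse inequality $\dim_\bbK \operatorname{soc}(R/I) \leq t$, i.e.\ that every socle element is a $\bbK$-combination of the $m_i$. Since $R/I$ is a monomial quotient, its socle has a $\bbK$-basis consisting of (images of) monomials $m \notin I$ with $X_j m \in I$ for all $j$. For such a monomial, $m \notin I$ means $m \notin Q_{i}$ for some $i$, while $X_j m \in Q_i$ for every $j$; tracking which single variable exponent each $X_j m$ exceeds in $Q_i$ forces $m$ to have exponent exactly $c_{i,j}-1$ in each slot $j$, so $m = m_i$. This shows the socle monomials are precisely the corner monomials $m_i$, giving the matching upper bound.

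Rather than reprove this standard commutative-algebra fact in full, I would cite the correspondence between irreducible decompositions of Artinian monomial ideals and socle generators (as in Bruns and Herzog~\cite{MR1251956} or Miller--Sturmfels), noting that the irredundancy of the decomposition is exactly the hypothesis needed to make the bijection between components and socle basis elements exact. The whole argument is then: $r_R(R/I) = \dim_\bbK \operatorname{soc}(R/I) = \#\{\text{corner monomials}\} = \#\{Q_i\} = t$.
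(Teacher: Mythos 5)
Your proof is correct, but note that the paper offers no argument for this statement at all: it is labeled a \emph{Fact} and quoted as a known result (the classical theorem that for an $\ffm$-primary ideal the number of components in an irredundant irreducible decomposition equals the socle dimension, i.e.\ the type). So there is nothing in the paper to compare against line by line; what you have done is supply the standard proof via the socle--corner-monomial correspondence, and every step checks out. In particular, the one place you wave your hands --- ``one checks it actually lands in $I$'' --- does go through: if $X_jm_i\notin Q_k$ for some $k$, then comparing exponents gives $c_{i,l}\le c_{k,l}$ for $l\ne j$ and $c_{i,j}<c_{k,j}$, whence $Q_k\subseteq Q_i$ and $Q_i$ is redundant, a contradiction; so irredundancy is exactly what makes each corner monomial a socle element, as you anticipated. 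The only caveat worth flagging is that your argument silently assumes the $Q_i$ are \emph{monomial} irreducible ideals (you invoke the shape $(X_1^{c_{i,1}},\dots,X_d^{c_{i,d}})$), whereas the Fact as stated allows arbitrary irreducible components; the general statement still holds by Noether's theorem that the number of components of an irredundant irreducible decomposition of an $\ffm$-primary ideal is an invariant equal to $\dim_\bbK\operatorname{soc}(R/I)$, and in any case the paper only ever applies the Fact to monomial decompositions, so your version suffices for its purposes.
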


    The next theorem is the main result of this paper. 

\begin{theorem} \label{computeTypeOfWRPathOfWRPathSuspension}
    Let $(\Sigma_rG)_\lambda$ be an edge-weighted $r$-path suspension of $G_\omega$ such that $\lambda(v_iv_j) \leq \lambda(v_i,v_{i,1})$ and $\lambda(v_iv_j) \leq \lambda(v_j,v_{j,1})$ for all edges $v_iv_j \in E$. Then
    \[r_{R'}\biggl(\frac{R'}{I_r((\Sigma_r G)_\lambda)}\biggr) =\sharp\+ \{\text{$\mathcal p$-minimal edge-weighted $r$-path vertex covers of $(\Sigma_{r-1}G)_{\lambda'}$}, \ \lambda' = \lambda|_{\Sigma_{r-1}G}\}.\]
\end{theorem}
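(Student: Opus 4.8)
The plan is to combine the machinery developed throughout Section~\ref{sec2} with the two facts about regular sequences and Cohen-Macaulay type stated just before the theorem. The key observation is that the Cohen-Macaulay type is invariant under modding out by a maximal regular sequence, so I would first reduce the computation of $r_{R'}(R'/I_r((\Sigma_r G)_\lambda))$ to a computation over the smaller ring $R$.

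First I would invoke Fact~\ref{SigmaGIsCMWRPathCase}\ref{SigmaGIsCMWRPathCasea} to guarantee that $R'/I_r((\Sigma_r G)_\lambda)$ is Cohen-Macaulay under the standing weight hypothesis, so that the Cohen-Macaulay type is well-defined. Next, by Fact~\ref{moddingOutByRegularSequenceReplacingYWithXWRPathCase}, the list $X_i - X_{i,k}$ with $1 \leq i \leq d$, $1 \leq k \leq r$ is a maximal homogeneous regular sequence on $R'/I_r((\Sigma_r G)_\lambda)$, and the resulting quotient is isomorphic to $R/I_r((\Sigma_r G)_\lambda)R$. Since the Cohen-Macaulay type is preserved when passing to the quotient by a maximal regular sequence (the type equals the minimal number of generators of the socle, which is unchanged), I would conclude
\[
    r_{R'}\biggl(\frac{R'}{I_r((\Sigma_r G)_\lambda)}\biggr) = r_{R}\biggl(\frac{R}{I_r((\Sigma_r G)_\lambda)R}\biggr).
\]
This reduces the problem to computing the type of $R/I_r((\Sigma_r G)_\lambda)R$, which is a quotient by a monomial ideal in the smaller polynomial ring $R$.

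Now I would apply Theorem~\ref{irredundantParaDecomOfWRPAthThm}, which provides an \emph{irredundant} irreducible decomposition of $I_r((\Sigma_r G)_\lambda)R$ as
\[
    I_r((\Sigma_r G)_\lambda)R = \bigcap_{(V'',\delta'') \text{ $\mathcal p$-min.}} P(q(V''),\gamma_{(V'',\delta'')}) + \ffm^{[\underline a(\lambda)]},
\]
where the intersection ranges over the $\mathcal p$-minimal edge-weighted $r$-path vertex covers of $(\Sigma_{r-1}G)_{\lambda'}$. Since each summand $P(q(V''),\gamma_{(V'',\delta'')}) + \ffm^{[\underline a(\lambda)]}$ is still an irreducible (monomial) ideal of the form $(X_1^{c_1},\dots,X_d^{c_d})R$ and the addition of $\ffm^{[\underline a(\lambda)]}$ makes the quotient $R/I_r((\Sigma_r G)_\lambda)R$ artinian, i.e., $\dim(R/I_r((\Sigma_r G)_\lambda)R) = 0$, I would verify that this decomposition remains irredundant after adding $\ffm^{[\underline a(\lambda)]}$ and then apply Fact~\ref{typeWhenMonomialIdealIrredundantParametricDecomp}, which says the type equals the number $t$ of irreducible components in an irredundant irreducible decomposition. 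This number is precisely the count of $\mathcal p$-minimal edge-weighted $r$-path vertex covers of $(\Sigma_{r-1}G)_{\lambda'}$, giving the claimed formula.

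The main obstacle I anticipate is the bookkeeping in the final step: confirming that adding $\ffm^{[\underline a(\lambda)]}$ to the irredundant intersection does not collapse or merge any components, so that irredundancy is genuinely preserved and the count of components is exactly the count of $\mathcal p$-minimal covers. One must check that distinct $\mathcal p$-minimal covers still give distinct irreducible ideals after the $\ffm^{[\underline a(\lambda)]}$ contribution is incorporated, and that no component becomes redundant against the others once each exponent is capped by the corresponding $a_i$. The irredundancy before adding $\ffm^{[\underline a(\lambda)]}$ is supplied by Lemma~\ref{MMinWRPathVCLemma} (which characterizes containment of the ideals $P(q(V''),\gamma_{(V'',\delta'')})$ in terms of the order $\leq_{\mathcal p}$), so the careful point is to ensure the $\mathcal p$-minimality condition continues to govern containments in the presence of the artinian term. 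Once this compatibility is established, the chain of equalities is immediate.
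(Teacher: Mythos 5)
Your proposal matches the paper's proof essentially step for step: reduce to $R/I_r((\Sigma_rG)_\lambda)R$ via Fact~\ref{moddingOutByRegularSequenceReplacingYWithXWRPathCase} and the invariance of type under quotienting by a maximal regular sequence (the paper cites \cite[Lemma 1.3.16]{MR1251956} together with Fact~\ref{SigmaGIsCMWRPathCase}\ref{SigmaGIsCMWRPathCasea}), then count components using Fact~\ref{typeWhenMonomialIdealIrredundantParametricDecomp} and Theorem~\ref{irredundantParaDecomOfWRPAthThm}. The irredundancy concern you flag at the end is not an open obstacle: Theorem~\ref{irredundantParaDecomOfWRPAthThm} already asserts that the decomposition \emph{with} the $\ffm^{[\underline a(\lambda)]}$ term is irredundant and irreducible (this is handled there via \cite[Theorem 7.5.3]{MR3839602}), so no further verification is required.
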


\begin{proof}
    We compute
    \begin{align*}
        r_{R'}\biggl(\frac{R'}{I_r((\Sigma_rG)_\lambda)}\biggr) &= r_{R'}\biggl(\frac{R'}{I_r((\Sigma_r G)_\lambda)+(X_i-X_{i,k} \mid 1 \leq i \leq d,1 \leq k \leq r)R'}\biggr) \\
        &= r_R\biggl(\frac{R}{I_r((\Sigma_rG)_\lambda)R}\biggr) \\
        &=\sharp\+ \{\text{ideals in an irredundant irreducible decomposition of }I_r((\Sigma_rG)_\lambda)R\} \\
        &=\sharp\+ \{\text{$\mathcal p$-minimal edge-weighted $r$-path vertex covers of }(\Sigma_{r-1}G)_{\lambda'}\},
    \end{align*}
    where the first equality is from \cite[Lemma 1.3.16]{MR1251956}, \ref{SigmaGIsCMWRPathCase}\ref{SigmaGIsCMWRPathCasea} and Fact~\ref{moddingOutByRegularSequenceReplacingYWithXWRPathCase}, the second equality is from Fact~\ref{moddingOutByRegularSequenceReplacingYWithXWRPathCase}, the third equality is from Fact~\ref{typeWhenMonomialIdealIrredundantParametricDecomp} since $\dim(R/I_r((\Sigma_rG)_\lambda)R) = 0$, and the last equality is from Fact~\ref{irredundantParaDecomOfWRPAthThm}.
\end{proof}

The following example illustrates the previous theorem.

\begin{example}
    Consider Example~\ref{exampleOfDecomOfWPMRPRIdeal}. Then by Theorem~\ref{computeTypeOfWRPathOfWRPathSuspension}, we have that
    \[r_{R'}(R'/I_3(\Sigma_3P_1)_\lambda) = 4.\]
\end{example}

\begin{corollary}\label{computeTypeOfWAndUMRPathOfRPathSuspensionCor}
    We have the following.
    \begin{enumerate}
        \item \label{computeTypeOfWAndUMRPathOfRPathSuspensionCora} Let $(\Sigma_rG)_\lambda$ be an edge-weighted $r$-path suspension of $G_\omega$ such that $\lambda(v_iv_j) \leq \lambda(v_i,v_{i,1})$ and $\lambda(v_iv_j) \leq \lambda(v_j,v_{j,1})$ for all edges $v_iv_j \in E$. Then 
            \[r_{R'}\biggl(\frac{R'}{I_r(\Sigma_r G)}\biggr) =\sharp\+ \{\text{$\mathcal p$-minimal $r$-path vertex covers of $\Sigma_{r-1}G$}\}.\]
        \item \label{computeTypeOfWAndUMRPathOfRPathSuspensionCorb}
            Let $(\Sigma G)_\lambda$ be an edge-weighted suspension of $G_\omega$ such that $\lambda(v_iv_j) \leq \lambda(v_iw_i)$ and $\lambda(v_iv_j) \leq \lambda(w_jv_j)$ for each $v_iv_j \in E$. Then
            \[r_{R'}\biggl(\frac{R'}{I((\Sigma G)_\lambda)}\biggr) =\sharp\+ \{\text{minimal edge-weighted vertex covers of }G_\omega\}.\]
        \item \label{computeTypeOfWAndUMRPathOfRPathSuspensionCorc} Let $\Sigma G$ be a suspension of $G$. Then
            \[r_{R'}\biggl(\frac{R'}{I(\Sigma G)}\biggr) =\sharp\+ \{\text{minimal vertex covers of }G\}.\]
    \end{enumerate}
\end{corollary}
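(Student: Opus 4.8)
The plan is to derive all three statements as specializations of Theorem~\ref{computeTypeOfWRPathOfWRPathSuspension}; the only genuine content is to verify that, under each specialization, the indexing set of $\mathcal p$-minimal edge-weighted $r$-path vertex covers degenerates to the combinatorial object named in the conclusion.

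For part~\ref{computeTypeOfWAndUMRPathOfRPathSuspensionCora}, I would take the constant edge-weighting $\omega\equiv 1$, so that $\lambda\equiv 1$, the two hypotheses $\lambda(v_iv_j)\leq\lambda(v_iv_{i,1})$ and $\lambda(v_iv_j)\leq\lambda(v_jv_{j,1})$ hold automatically, and $I_r((\Sigma_rG)_\lambda)=I_r(\Sigma_rG)$. Theorem~\ref{computeTypeOfWRPathOfWRPathSuspension} then counts the $\mathcal p$-minimal edge-weighted $r$-path vertex covers of $(\Sigma_{r-1}G)_{\lambda'}$ with $\lambda'\equiv 1$. The crux is that on such a cover the weight function is forced to be identically $1$: by Definition~\ref{defOffWRPathVC} a chosen vertex whose weight exceeds its local maximum edge weight (here every edge weight is $1$) covers no $r$-path, so if it had weight $\geq 2$ it could be deleted, contradicting minimality. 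With $\delta''\equiv 1$ the forgetful map $(V'',\delta'')\mapsto V''$ is a bijection onto the minimal $r$-path vertex covers of $\Sigma_{r-1}G$, and since $W_i(\ffP)$ then consists of all chosen vertices over $v_i$ and $\gamma_{(V'',\delta'')}(v_i)=1+\min\{j\mid v_{i,j}\in V''\}$ depends only on this combinatorial depth data, the bijection transports $\leq_{\mathcal p}$ to the corresponding unweighted order and hence carries $\mathcal p$-minimal covers to $\mathcal p$-minimal $r$-path vertex covers.

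For part~\ref{computeTypeOfWAndUMRPathOfRPathSuspensionCorb} I would set $r=1$, so that $\Sigma_1G=\Sigma G$, $I_1=I$, the hypothesis is exactly the $r=1$ instance of that of Theorem~\ref{computeTypeOfWRPathOfWRPathSuspension} (with $w_i=v_{i,1}$), and the theorem counts the $\mathcal p$-minimal edge-weighted $1$-path vertex covers of $(\Sigma_0G)_{\lambda'}=G_\omega$, where a $1$-path is an edge. Because $\Sigma_0G=G$ carries no whiskers, each $v_i$ admits only the index $j=0$, the sum $\sum_{k=0}^{-1}h_{i,k}$ is empty, and $\gamma_{(V'',\delta'')}=\delta''$ on its (finite) support; consequently the relation $\leq_{\mathcal p}$ coincides with the order $\leq$ of Definition~\ref{defOffWRPathVC}, so $\mathcal p$-minimal reduces to minimal and the count is the number of minimal edge-weighted vertex covers of $G_\omega$.

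Finally, part~\ref{computeTypeOfWAndUMRPathOfRPathSuspensionCorc} follows by feeding $\omega\equiv 1$ into part~\ref{computeTypeOfWAndUMRPathOfRPathSuspensionCorb}: the same weight-forcing argument as in part~\ref{computeTypeOfWAndUMRPathOfRPathSuspensionCora} shows that every minimal edge-weighted vertex cover of $G_\omega$ has $\delta''\equiv 1$, so these biject with the minimal vertex covers of $G$. I expect the main obstacle to be precisely these two degeneration checks—that minimality forces unit weights when $\omega\equiv 1$, and that $\leq_{\mathcal p}$ collapses to $\leq$ once the whiskers disappear at $r=1$—since everything else is a direct substitution into the main theorem.
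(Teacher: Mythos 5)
Your proposal is correct and follows essentially the same route as the paper: all three parts are obtained by specializing Theorem~\ref{computeTypeOfWRPathOfWRPathSuspension} (constant weight $\mathbf 1$ for (a), $r=1$ so that $(\Sigma_0G)_{\lambda'}=G_\omega$ for (b), and both specializations for (c)), with the only content being that the indexing set degenerates as claimed. You are in fact somewhat more explicit than the paper, which simply invokes Theorem~\ref{criterionForPMinimalityThm} for the collapse of $\leq_{\mathcal p}$ to $\leq$ in (b) and treats the unit-weight identifications in (a) and (c) as immediate, whereas you spell out the weight-forcing argument; this is a harmless and welcome elaboration.
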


\begin{proof}
    \begin{enumerate}
        \item Let $\textbf 1: E(\Sigma_r G) \to \bbN$ be the constant weight function on $\Sigma_rG$ defined by $\textbf 1(e) = 1$ for $e \in E(\Sigma_rG)$. Then $\Sigma_{r-1}G = (\Sigma_{r-1}G)_{\textbf 1'}$ with $\textbf 1' = \textbf 1_{\lambda|_{\Sigma_{r-1}G}}$ and $I_r(\Sigma_rG) = I_r((\Sigma_rG)_{\textbf 1})$. Hence the conclusion is covered in Theorem~\ref{computeTypeOfWRPathOfWRPathSuspension}. 
        \item Let $(V'',\delta'')$ be a minimal $1$-path vertex cover of $G_\omega$. By definition, any $\mathcal p$-minimal $1$-path vertex cover of $G_\omega$ is a minimal $1$-path vertex cover. Then it suffices to show that $(V'',\delta'')$ is a $\mathcal p$-minimal $1$-path vertex cover of $G_\omega$, which is true by Theorem~\ref{criterionForPMinimalityThm}.
        \item Let $\textbf 1: E(\Sigma G) \to \bbN$ be the constant weight function on $\Sigma G$ defined by $\textbf 1(e) = 1$ for $e \in E(\Sigma G)$. Then $G = G_{\textbf 1}$ and $I(\Sigma G) = I((\Sigma G)_{\textbf 1})$. Therefore, the conclusion is covered in part (b). \qedhere
    \end{enumerate}
\end{proof}

Lastly, we give a corresponding example for each of previous corollaries.

\begin{example}
    \begin{enumerate}
        \item 
            Consider the following graph $\Sigma_3 P_2$ with $P_2 = (\begin{tikzcd}v_1 \ar[r,dash] & v_2 \ar[r,dash] & v_3\end{tikzcd})$
            \[
                \begin{tikzcd}
                    v_1 \ar[d,dash] \ar[r,dash] & v_{1,1} \ar[r,dash] & v_{1,2} \ar[r,dash] & v_{1,3} \\
                    v_2 \ar[d,dash] \ar[r,dash] & v_{2,1} \ar[r,dash] & v_{2,2} \ar[r,dash] & v_{2,3} \\
                    v_3 \ar[r,dash] & v_{3,1} \ar[r,dash] & v_{3,2} \ar[r,dash] & v_{3,3}
                \end{tikzcd}
            \]
            We depict the minimal 3-path vertex covers of $\Sigma_2P_2$ in the following sketches.
            \[
                \begin{tikzcd}[every matrix/.append style={name=m},
                  execute at end picture={
                      \draw [<-] (m-2-1) ellipse (0.33cm and 0.21cm);
                  }]
                    v_1 \ar[d,dash] \ar[r,dash] & v_{1,1} \ar[r,dash] & v_{1,2} \\
                    v_2 \ar[d,dash] \ar[r,dash] & v_{2,1} \ar[r,dash] & v_{2,2} \\
                    v_3 \ar[r,dash] & v_{3,1} \ar[r,dash] & v_{3,2}
                \end{tikzcd} \ \ \ \ \ \
                \begin{tikzcd}[every matrix/.append style={name=m},
                  execute at end picture={
                      \draw [<-] (m-1-1) ellipse (0.33cm and 0.21cm);
                      \draw [<-] (m-3-1) ellipse (0.33cm and 0.21cm);
                  }]
                    v_1 \ar[d,dash] \ar[r,dash] & v_{1,1} \ar[r,dash] & v_{1,2} \\
                    v_2 \ar[d,dash] \ar[r,dash] & v_{2,1} \ar[r,dash] & v_{2,2} \\
                    v_3 \ar[r,dash] & v_{3,1} \ar[r,dash] & v_{3,2}
                \end{tikzcd} \ \ \ \ \ \
                \begin{tikzcd}[every matrix/.append style={name=m},
                  execute at end picture={
                      \draw [<-] (m-1-1) ellipse (0.33cm and 0.21cm);
                      \draw [<-] (m-2-2) ellipse (0.33cm and 0.21cm);
                      \draw [<-] (m-3-2) ellipse (0.33cm and 0.21cm);
                  }]
                    v_1 \ar[d,dash] \ar[r,dash] & v_{1,1} \ar[r,dash] & v_{1,2} \\
                    v_2 \ar[d,dash] \ar[r,dash] & v_{2,1} \ar[r,dash] & v_{2,2} \\
                    v_3 \ar[r,dash] & v_{3,1} \ar[r,dash] & v_{3,2}
                \end{tikzcd}
            \]
            \[
                \begin{tikzcd}[every matrix/.append style={name=m},
                  execute at end picture={
                      \draw [<-] (m-1-2) ellipse (0.33cm and 0.21cm);
                      \draw [<-] (m-2-2) ellipse (0.33cm and 0.21cm);
                      \draw [<-] (m-3-2) ellipse (0.33cm and 0.21cm);
                  }]
                    v_1 \ar[d,dash] \ar[r,dash] & v_{1,1} \ar[r,dash] & v_{1,2} \\
                    v_2 \ar[d,dash] \ar[r,dash] & v_{2,1} \ar[r,dash] & v_{2,2} \\
                    v_3 \ar[r,dash] & v_{3,1} \ar[r,dash] & v_{3,2}
                \end{tikzcd} \ \ \ \ \ \
                \begin{tikzcd}[every matrix/.append style={name=m},
                  execute at end picture={
                      \draw [<-] (m-1-1) ellipse (0.33cm and 0.21cm);
                      \draw [<-] (m-2-2) ellipse (0.33cm and 0.21cm);
                      \draw [<-] (m-3-3) ellipse (0.33cm and 0.21cm);
                  }]
                    v_1 \ar[d,dash] \ar[r,dash] & v_{1,1} \ar[r,dash] & v_{1,2} \\
                    v_2 \ar[d,dash] \ar[r,dash] & v_{2,1} \ar[r,dash] & v_{2,2} \\
                    v_3 \ar[r,dash] & v_{3,1} \ar[r,dash] & v_{3,2}
                \end{tikzcd} \ \ \ \ \ \
                \begin{tikzcd}[every matrix/.append style={name=m},
                  execute at end picture={
                      \draw [<-] (m-1-1) ellipse (0.33cm and 0.21cm);
                      \draw [<-] (m-2-3) ellipse (0.33cm and 0.21cm);
                      \draw [<-] (m-3-2) ellipse (0.33cm and 0.21cm);
                      \draw [<-] (m-3-2) ellipse (0.33cm and 0.21cm);
                  }]
                    v_1 \ar[d,dash] \ar[r,dash] & v_{1,1} \ar[r,dash] & v_{1,2} \\
                    v_2 \ar[d,dash] \ar[r,dash] & v_{2,1} \ar[r,dash] & v_{2,2} \\
                    v_3 \ar[r,dash] & v_{3,1} \ar[r,dash] & v_{3,2}
                \end{tikzcd}
            \]
            \[
                \begin{tikzcd}[every matrix/.append style={name=m},
                  execute at end picture={
                      \draw [<-] (m-1-2) ellipse (0.33cm and 0.21cm);
                      \draw [<-] (m-2-2) ellipse (0.33cm and 0.21cm);
                      \draw [<-] (m-3-1) ellipse (0.33cm and 0.21cm);
                  }]
                    v_1 \ar[d,dash] \ar[r,dash] & v_{1,1} \ar[r,dash] & v_{1,2} \\
                    v_2 \ar[d,dash] \ar[r,dash] & v_{2,1} \ar[r,dash] & v_{2,2} \\
                    v_3 \ar[r,dash] & v_{3,1} \ar[r,dash] & v_{3,2}
                \end{tikzcd} \ \ \ \ \ \
                \begin{tikzcd}[every matrix/.append style={name=m},
                  execute at end picture={
                      \draw [<-] (m-1-3) ellipse (0.33cm and 0.21cm);
                      \draw [<-] (m-2-2) ellipse (0.33cm and 0.21cm);
                      \draw [<-] (m-3-1) ellipse (0.33cm and 0.21cm);
                  }]
                    v_1 \ar[d,dash] \ar[r,dash] & v_{1,1} \ar[r,dash] & v_{1,2} \\
                    v_2 \ar[d,dash] \ar[r,dash] & v_{2,1} \ar[r,dash] & v_{2,2} \\
                    v_3 \ar[r,dash] & v_{3,1} \ar[r,dash] & v_{3,2}
                \end{tikzcd} \ \ \ \ \ \
                \begin{tikzcd}[every matrix/.append style={name=m},
                  execute at end picture={
                      \draw [<-] (m-1-2) ellipse (0.33cm and 0.21cm);
                      \draw [<-] (m-2-3) ellipse (0.33cm and 0.21cm);
                      \draw [<-] (m-3-1) ellipse (0.33cm and 0.21cm);
                  }]
                    v_1 \ar[d,dash] \ar[r,dash] & v_{1,1} \ar[r,dash] & v_{1,2} \\
                    v_2 \ar[d,dash] \ar[r,dash] & v_{2,1} \ar[r,dash] & v_{2,2} \\
                    v_3 \ar[r,dash] & v_{3,1} \ar[r,dash] & v_{3,2}
                \end{tikzcd}
            \]
            \[
                \begin{tikzcd}[every matrix/.append style={name=m},
                  execute at end picture={
                      \draw [<-] (m-1-2) ellipse (0.33cm and 0.21cm);
                      \draw [<-] (m-2-3) ellipse (0.33cm and 0.21cm);
                      \draw [<-] (m-3-2) ellipse (0.33cm and 0.21cm);
                  }]
                    v_1 \ar[d,dash] \ar[r,dash] & v_{1,1} \ar[r,dash] & v_{1,2} \\
                    v_2 \ar[d,dash] \ar[r,dash] & v_{2,1} \ar[r,dash] & v_{2,2} \\
                    v_3 \ar[r,dash] & v_{3,1} \ar[r,dash] & v_{3,2}
                \end{tikzcd}
            \]
            Hence by Corollary~\ref{minDecomOfReductiveWRPathIdealRMinus1}, we have an irreducible decomposition 
            \begin{align*}
                I_3(\Sigma_2P_2)R &= (X_2)R \cap  (X_1,X_3)R \cap (X_1,X_2^2,X_3^2)R \cap (X_1^2,X_2^2,X_3^2)R \cap (X_1,X_2^2,X_3^3)R \\
                &\ \ \ \cap (X_1,X_2^3,X_3^2)R \cap (X_1^2,X_2^2,X_3)R \cap (X_1^3,X_2^2,X_3)R \cap (X_1^2,X_2^3,X_3)R \\
                & \ \ \ \cap (X_1^2,X_2^3,X_3^2)R,
            \end{align*}
            which is a redundant decomposition since e.g., the last ideal $(X_1^2,X_2^3,X_3^2)R$ is contained in the second to last ideal $(X_1^2,X_2^3,X_3)R$. Note that the $\mathcal p$-minimal 3-path vertex covers of $\Sigma_2 P_2$ are the following.
            \[
                \begin{tikzcd}[every matrix/.append style={name=m},
                  execute at end picture={
                      \draw [<-] (m-2-1) ellipse (0.33cm and 0.21cm);
                  }]
                    v_1 \ar[d,dash] \ar[r,dash] & v_{1,1} \ar[r,dash] & v_{1,2} \\
                    v_2 \ar[d,dash] \ar[r,dash] & v_{2,1} \ar[r,dash] & v_{2,2} \\
                    v_3 \ar[r,dash] & v_{3,1} \ar[r,dash] & v_{3,2}
                \end{tikzcd} \ \ \ \ \ \
                \begin{tikzcd}[every matrix/.append style={name=m},
                  execute at end picture={
                      \draw [<-] (m-1-1) ellipse (0.33cm and 0.21cm);
                      \draw [<-] (m-3-1) ellipse (0.33cm and 0.21cm);
                  }]
                    v_1 \ar[d,dash] \ar[r,dash] & v_{1,1} \ar[r,dash] & v_{1,2} \\
                    v_2 \ar[d,dash] \ar[r,dash] & v_{2,1} \ar[r,dash] & v_{2,2} \\
                    v_3 \ar[r,dash] & v_{3,1} \ar[r,dash] & v_{3,2}
                \end{tikzcd} \ \ \ \ \ \
                \begin{tikzcd}[every matrix/.append style={name=m},
                  execute at end picture={
                      \draw [<-] (m-1-1) ellipse (0.33cm and 0.21cm);
                      \draw [<-] (m-2-2) ellipse (0.33cm and 0.21cm);
                      \draw [<-] (m-3-3) ellipse (0.33cm and 0.21cm);
                  }]
                    v_1 \ar[d,dash] \ar[r,dash] & v_{1,1} \ar[r,dash] & v_{1,2} \\
                    v_2 \ar[d,dash] \ar[r,dash] & v_{2,1} \ar[r,dash] & v_{2,2} \\
                    v_3 \ar[r,dash] & v_{3,1} \ar[r,dash] & v_{3,2}
                \end{tikzcd}
            \]
            \[
                \begin{tikzcd}[every matrix/.append style={name=m},
                  execute at end picture={
                      \draw [<-] (m-1-3) ellipse (0.33cm and 0.21cm);
                      \draw [<-] (m-2-2) ellipse (0.33cm and 0.21cm);
                      \draw [<-] (m-3-1) ellipse (0.33cm and 0.21cm);
                  }]
                    v_1 \ar[d,dash] \ar[r,dash] & v_{1,1} \ar[r,dash] & v_{1,2} \\
                    v_2 \ar[d,dash] \ar[r,dash] & v_{2,1} \ar[r,dash] & v_{2,2} \\
                    v_3 \ar[r,dash] & v_{3,1} \ar[r,dash] & v_{3,2}
                \end{tikzcd} \ \ \ \ \ \
                \begin{tikzcd}[every matrix/.append style={name=m},
                  execute at end picture={
                      \draw [<-] (m-1-2) ellipse (0.33cm and 0.21cm);
                      \draw [<-] (m-2-3) ellipse (0.33cm and 0.21cm);
                      \draw [<-] (m-3-2) ellipse (0.33cm and 0.21cm);
                  }]
                    v_1 \ar[d,dash] \ar[r,dash] & v_{1,1} \ar[r,dash] & v_{1,2} \\
                    v_2 \ar[d,dash] \ar[r,dash] & v_{2,1} \ar[r,dash] & v_{2,2} \\
                    v_3 \ar[r,dash] & v_{3,1} \ar[r,dash] & v_{3,2}
                \end{tikzcd}
            \]
            Then by Theorem~\ref{irredundantParaDecomOfWRPAthThm} and we have an irredundant irreducible decomposition
            \begin{align*}
                I_3(\Sigma_2P_2)R &= (X_2)R \cap  (X_1,X_3)R \cap (X_1,X_2^2,X_3^3)R \cap (X_1^3,X_2^2,X_3)R \cap (X_1^2,X_2^3,X_3^2)R,
            \end{align*}
            and by Corollary~\ref{computeTypeOfWAndUMRPathOfRPathSuspensionCor}\ref{computeTypeOfWAndUMRPathOfRPathSuspensionCora}, we have that
            \[r_{R'}(R'/I_3(\Sigma_3P_2)) = 5.\]
        \item 
            Consider the following edge-weighted graph $(\Sigma P_2)_\lambda$ with $(P_2)_\omega = (\begin{tikzcd}v_1 \ar[r,dash,"2"] & v_2 \ar[r,dash,"3"] & v_3\end{tikzcd})$.
            \[
                \begin{tikzcd}
                    w_1 \ar[d,dash,"5"] & w_2 \ar[d,dash,"3"] & w_3 \ar[d,dash,"4"] \\
                    v_1 \ar[r,dash,"2"] & v_2 \ar[r,dash,"3"] & v_3
                \end{tikzcd}
            \]
            The minimal edge-weighted vertex covers of $(P_2)_\omega = \Bigl(\begin{tikzcd}v_1 \ar[r,dash,"2"] & v_2 \ar[r,dash,"3"] & v_3 \end{tikzcd}\Bigr)$ are displayed in the following sketches.
            \[
                \begin{tikzcd}[every matrix/.append style={name=m},
                  execute at end picture={
                      \draw [<-] (m-1-2) ellipse(0.3cm and 0.25cm);
                  }]
                  v_1 \ar[r,dash,"2"] & v_2^2 \ar[r,dash,"3"] & v_3
                \end{tikzcd} \ \ \ \ \ \ 
                \begin{tikzcd}[every matrix/.append style={name=m},
                  execute at end picture={
                      \draw [<-] (m-1-1) ellipse(0.3cm and 0.25cm);
                      \draw [<-] (m-1-2) ellipse(0.3cm and 0.25cm);
                  }]
                  v_1^2 \ar[r,dash,"2"] & v_2^3 \ar[r,dash,"3"] & v_3
                \end{tikzcd} \ \ \ \ \ \ 
                \begin{tikzcd}[every matrix/.append style={name=m},
                  execute at end picture={
                      \draw [<-] (m-1-1) ellipse(0.3cm and 0.25cm);
                      \draw [<-] (m-1-3) ellipse(0.3cm and 0.25cm);
                  }]
                  v_1^2 \ar[r,dash,"2"] & v_2 \ar[r,dash,"3"] & v_3^3
                \end{tikzcd}
            \]
            Then by Corollary~\ref{computeTypeOfWAndUMRPathOfRPathSuspensionCor}\ref{computeTypeOfWAndUMRPathOfRPathSuspensionCorb},
            \[r_{R'}(R'/I((\Sigma P_2)_\lambda)) =\sharp\+ \{\text{minimal edge-weighted vertex covers of }(P_2)_\omega = 3\}.\]
        \item 
            Consider the following graph $\Sigma P_2$ with $P_2 = (\begin{tikzcd}v_1 \ar[r,dash] & v_2 \ar[r,dash] & v_3\end{tikzcd})$.
            \[
                \begin{tikzcd}
                    w_1 \ar[d,dash] & w_2 \ar[d,dash] & w_3 \ar[d,dash] \\
                    v_1 \ar[r,dash] & v_2 \ar[r,dash] & v_3
                \end{tikzcd}
            \]
            We depict the minimal vertex covers of $P_2$ in the following sketches.
            \[
                \begin{tikzcd}[every matrix/.append style={name=m},
                  execute at end picture={
                      \draw [<-] (m-1-1) ellipse (0.3cm and 0.2cm);
                      \draw [<-] (m-1-3) ellipse (0.3cm and 0.2cm);
                  }]
                    v_1 \ar[r,dash] & v_2 \ar[r,dash] & v_3
                \end{tikzcd} \ \ \ \ \ \ 
                \begin{tikzcd}[every matrix/.append style={name=m},
                  execute at end picture={
                      \draw [<-] (m-1-2) ellipse (0.3cm and 0.2cm);
                  }]
                    v_1 \ar[r,dash] & v_2 \ar[r,dash] & v_3
                \end{tikzcd}
            \]
            By Corollary~\ref{computeTypeOfWAndUMRPathOfRPathSuspensionCor}\ref{computeTypeOfWAndUMRPathOfRPathSuspensionCorc}, 
            \[r_{R'}(R'/I(\Sigma P_2)) =\sharp\+ \{\text{minimal vertex covers of }P_2\} = 2.\]
    \end{enumerate}
\end{example}

\section*{Acknowledgments}

We are grateful for the insightful comments and feedback provided by Keri Ann Sather-Wagstaff and Janet Vassilev. 

\bibliographystyle{plain} 
\bibliography{bibliography}

\begin{thebibliography}{1}

\bibitem{MR1251956}
Winfried Bruns and J\"{u}rgen Herzog.
\newblock {\em Cohen-{M}acaulay rings revised edition}, volume~39 of {\em
  Cambridge Studies in Advanced Mathematics}.
\newblock Cambridge University Press, Cambridge, 1993.

\bibitem{MR1666661}
Aldo Conca and Emanuela De~Negri.
\newblock {$M$}-sequences, graph ideals, and ladder ideals of linear type.
\newblock {\em J. Algebra}, 211(2):599--624, 1999.

\bibitem{MR681256}
Ralf Fr\"{o}berg.
\newblock A study of graded extremal rings and of monomial rings.
\newblock {\em Math. Scand.}, 51(1):22--34, 1982.

\bibitem{MR3335988}
Bethany Kubik and Sean Sather-Wagstaff.
\newblock Path ideals of weighted graphs.
\newblock {\em J. Pure Appl. Algebra}, 219(9):3889--3912, 2015.

\bibitem{MR3839602}
W.~Frank Moore, Mark Rogers, and Sean Sather-Wagstaff.
\newblock {\em Monomial ideals and their decompositions}.
\newblock Universitext. Springer, Cham, 2018.

\bibitem{MR3055580}
Chelsey Paulsen and Sean Sather-Wagstaff.
\newblock Edge ideals of weighted graphs.
\newblock {\em J. Algebra Appl.}, 12(5):1250223, 24, 2013.

\bibitem{MR1031197}
Rafael~H. Villarreal.
\newblock Cohen-{M}acaulay graphs.
\newblock {\em Manuscripta Math.}, 66(3):277--293, 1990.

\end{thebibliography}

\end{document}